\title{On the conditional joint probability distributions of phase-type under the mixture of finite-state absorbing Markov jump processes}
 \author{B.A. Surya\footnote{School of Mathematics and Statistics, Victoria University of Wellington, Gate 6 Kelburn PDE, Wellington 6140, New Zealand. Email address: budhi.surya@vuw.ac.nz }\\ School of Mathematics and Statistics \\ Victoria University of Wellington, New Zealand }
\date{}
\newtheorem{theorem}{Theorem}[section]
\newtheorem{theo}[theorem]{Theorem}
\newtheorem{lem}[theorem]{Lemma}
\newtheorem{cor}[theorem]{Corollary}
\newtheorem{prop}[theorem]{Proposition}
\newtheorem{Rem}[theorem]{Remark}
\newtheorem{Ex}[theorem]{Example}
\newcommand{\exit}{{\mbox{\, \vspace{3mm}}} \hfill\mbox{$\square$}}
\numberwithin{equation}{section}
\date{12 May 2018}
\begin{document}

\maketitle \pagestyle{myheadings} \markboth{B.A. Surya}{On the conditional joint probability distributions of phase-type}

\begin{abstract}
This paper presents some new results on the conditional joint probability distributions of phase-type under the mixture of right-continuous Markov jump processes with absorption on the same finite state space $\mathbb{S}$ moving at different speeds, where the mixture occurs at a random time. Such mixture was first proposed by Frydman \cite{Frydman2005} and Frydman and Schuermann \cite{Frydman2008} as a generalization of the mover-stayer model of Blumen et at. \cite{Blumen}, and was recently extended by Surya \cite{Surya2018}. When conditioning on all previous and current information $\mathcal{F}_{t,i}=\mathcal{F}_{t-}\cup\{X_t=i\}$, with $\mathcal{F}_{t-}=\{X_s, 0<s\leq t-\}$ and $i\in\mathbb{S}$, of the mixture process $X$, distributional identities are explicit in terms of the Bayesian updates of switching probability, the likelihoods of observing the sample paths, and the intensity matrices of the underlying Markov processes, despite the fact that the mixture itself is non-Markov. They form non-stationary function of time and have the ability to capture heterogeneity and path dependence. When the underlying processes move at the same speed, in which case the mixture reduces to a simple Markov jump process, these features are removed, and the distributions coincide with that of given by Neuts \cite{Neuts1975} and Assaf et al. \cite{Assaf1984}. Furthermore, when conditioning on $\mathcal{F}_{t-}$ and no exit to the absorbing set has been observed at time $t$, the distributions are given explicitly in terms of an additional Bayesian updates of probability distribution of $X$ on $\mathbb{S}$. Examples are given to illustrate the main results. 
  
\medskip

\textbf{MSC2010 Subject Classification:} 60J20, 60J27, 60J28, 62N99

\textbf{Keywords}: Markov jump processes, mixture of Markov jump processes, conditional multivariate phase-type distributions, competing risks

\end{abstract}

\section{Introduction}

Markov chain has been one among the most important probabilistic tools in modeling complex stochastic systems evolutions. It has been widely used in variety of applications across various fields such as, among others, in ecology (Balzter \cite{Balzter}), finance (Jarrow and Turnbull \cite{Jarrow1995}, and Jarrow et al. \cite{Jarrow1997}), marketing (Berger and Nasr \cite{Berger} and Pfeifer and Carraway \cite{Pfeifer}), etc. The phase-type model describes the lifetime distribution of an absorbing Markov chain. It was first introduced in univariate form by Neuts \cite{Neuts1975} in 1975 as the generalization of Erlang distribution. It has dense property, which can approximate any distribution of positive random variables arbitrarily well, and has closure property under finite convex mixtures and convolutions. When jumps distribution of compound Poisson process is modelled by phase-type model, it results in a dense class of L\'evy processes, see Asmussen \cite{Asmussen2003}. The advantage of working under phase-type distribution is that it allows some analytically tractable results in applications. To mention some, in option pricing (Asmussen et al. \cite{Asmussen2004}), actuarial science (Albrecher and Asmussen \cite{Asmussen2010}, Rolski et al. \cite{Rolski}, Zadeh et al. \cite{Zadeh}), in survival analysis (Aalen \cite{Aalen1995}, Aalen and Gjessing \cite{Aalen2001}), in queueing theory (Chakravarthy and Neuts \cite{Chakravarthy}, Asmussen \cite{Asmussen2003}), in reliability theory (Assaf and Levikson \cite{Assaf1982}, Okamura and Dohi \cite{Okamura}). 

The phase-type distribution $\overline{F}$ is expressed in terms of a Markov jump process $\{X_t\}_{t\geq 0}$ with finite state space $\mathbb{S}=E\cup \{\Delta\}$, where for some integer $m\geq 1$, $E=\{i: i=1,...,m\}$ and $\Delta$ represent respectively the transient and absorbing states. The lifetime of the Markov process and its distribution are defined by
\begin{equation}\label{eq:DefTime}
\tau=\inf\{t\geq 0: X_t=\Delta\} \quad \textrm{and} \quad \overline{F}(t)=\mathbb{P}\{\tau > t\}.
\end{equation}

In view of credit risk applications, the state space $\mathbb{S}$ represents the possible credit classes, with $1$ being the highest (\textrm{Aaa} in Moody's rankings) and $m$ being the lowest (\textrm{C} in Moody's rankings), whilst the absorbing state $\Delta$ represents bankruptcy, \textrm{D}. The distribution $\pi_k$ represents the proportion of homogeneous bonds in the rating $k$. We refer to \cite{Jarrow1995} and \cite{Jarrow1997} and literature therein for details.   

Unless stated otherwise, we assume for simplicity that the initial probability $\boldsymbol{\pi}$ of starting in any of the $m + 1$ phases has zero mass on the absorbing state $\Delta$, i.e., $\pi_{\Delta}=0$, so that $\mathbb{P}\{\tau>0\}=1$. We also refer to $\Delta$ as the $(m+1)$th element of the state space $\mathbb{S}$, i.e., $\Delta=m+1$. The speed at which the Markov process moves along the state space $\mathbb{S}$ is described by an intensity matrix $\mathbf{Q}$. This matrix has block partition according to the process moving in the transient state $E$ and in the absorbing state $\Delta$, which admits the following block-partitioned form:
\begin{equation}\label{eq:MatQ}
\mathbf{Q} = \left(\begin{array}{cc}
  \mathbf{A} & -\mathbf{A}\mathbb{1} \\
  \mathbf{0} & 0 \\
\end{array}\right),
\end{equation}
with $\mathbb{1}=(1,...,1)^{\top}$, as the rows of the intensity matrix $\mathbf{Q}$ sums to zero. That is to say that the entry $q_{ij}$ of the matrix $\mathbf{Q}$ satisfies the following properties:
\begin{equation}\label{eq:matq}
q_{ii}\leq 0, \; \; q_{ij}\geq 0, \; \; \sum_{j\neq i} q_{ij}=-q_{ii}=q_i, \quad (i,j)\in \mathbb{S}.
\end{equation}
As $-\mathbf{A}\mathbb{1}$ is a non-negative vector, (\ref{eq:matq}) implies that $\mathbf{A}$ to be a negative definite matrix, i.e., $\mathbb{1}^{\top}\mathbf{A}\mathbb{1}<0$. The matrix $\mathbf{A}$ is known as the phase generator matrix of $\mathbf{Q}$. The absorption is certain if and only if $\mathbf{A}$ is nonsingular, see Neuts \cite{Neuts1981}.

Following Theorem 3.4 and Corollary 3.5 in \cite{Asmussen2003} and by the homogeneity of $X$, the transition probability matrix $\mathbf{P}(t)$ of $X$ over the period of time $(0,t)$ is
\begin{equation}\label{eq:transprob}
\mathbf{P}(t)= \exp(\mathbf{Q} t), \quad t\geq 0.
\end{equation}

\noindent The entry $q_{ij}$ has probabilistic interpretation: $1/(-q_{ii})$ is the expected length of time that $X$ remains in state $i\in E$, and $q_{ij}/q_{i}$ is the probability that when a transition out of state $i$ occurs, it is to state $j\in\mathbb{S}$, $i\neq j$. The representation of the distribution $\overline{F}$ is uniquely specified by $(\boldsymbol{\pi},\mathbf{A})$. We refer among others to Neuts \cite{Neuts1981} and Asmussen \cite{Asmussen2003} for details. Following \cite{Neuts1981} and Proposition 4.1 \cite{Asmussen2003},
\begin{equation}\label{eq:DistrDefTime}
 \overline{F}(t)=\boldsymbol{\pi}^{\top} e^{\mathbf{A} t} \mathbb{1} \quad \textrm{and} \quad f(t)=-\boldsymbol{\pi}^{\top} e^{\mathbf{A} t} \mathbf{A}\mathbb{1}.
 \end{equation}

The extension of (\ref{eq:DistrDefTime}) to multivariate form was proposed by Assaf et al. \cite{Assaf1984} and later by Kulkarni \cite{Kulkarni}. Following \cite{Assaf1984}, let $\Gamma_1,...,\Gamma_n$ be nonempty stochastically closed subsets of $\mathbb{S}$ such that $\cap_{k=1}^n \Gamma_k$ is a proper subset of $\mathbb{S}$. ($\Gamma_i\subset \mathbb{S}$ is said to be stochastically closed if once $X$ enters $\Gamma_i$, it never leaves.) We assume without loss of generality that $\cap_{k=1}^n \Gamma_k$ consists of only the absorbing state $\Delta$, i.e., $\cap_{k=1}^n \Gamma_k=\Delta$. Since $\Gamma_k$ is stochastically closed, necessarily $q_{ij}=0$ if $i\in\Gamma_k$ and $j\in\Gamma_k^c$.

The first time until absorbtion of $X$ in the set $\Gamma_k$ is defined by
\begin{equation}\label{eq:MultiPH}
\tau_k:=\inf\{t\geq 0: X_t \in \Gamma_k\}.
\end{equation}
The joint distribution $\overline{F}$ of $\{\tau_k\}$ is called the multivariate phase type distribution, see \cite{Assaf1984}. Let $t_{i_n}\geq\dots\geq t_{i_1}\geq 0$ be the ordering of $(t_1,...,t_n)\in\mathbb{R}_+^n$. Following \cite{Assaf1984},
\begin{equation}\label{eq:MPH}
\begin{split}
\overline{F}(t_1,...,t_n)=&\mathbb{P}\{\tau_1 > t_1,..., \tau_n > t_n)=\boldsymbol{\pi}^{\top}\Big(\prod_{k=1}^n \exp\big(\mathbf{A}(t_{i_k}-t_{i_{k-1}})\big)\mathbf{H}_{k}\Big)\mathbb{1},
\end{split}
\end{equation}
where $\mathbf{H}_{k}$ is $(m\times m)$ diagonal matrix whose $i$th diagonal element, for $i=1,...,m$, equals $1$ when $i\in\Gamma_k^c$ and is zero otherwise. Again, as before we assume $\boldsymbol{\pi}$ has zero mass on $\Delta$ and $\pi_i\neq 0$ for $i\in\bigcup_{k=1}^n \Gamma_k^c$ implying that $\mathbb{P}\{\tau_1>0,..., \tau_n>0)=1$.

The multivariate distribution (\ref{eq:MPH}) has found various applications, e.g., in modeling credit default contagion (Herbertsson \cite{Herbertsson}, Bielecki et al. \cite{Bielecki}), in modeling aggregate loss distribution in insurance (Berdel and Hipp \cite{Berdel}, Asimit and Jones \cite{Asimit} and Willmot and Woo \cite{Willmot}), and in Queueing theory (Badila et al. \cite{Badila}).

Due to spatial homogeneity of the underlying Markov process, the distributions (\ref{eq:DistrDefTime}) \& (\ref{eq:MPH}) have stationary property and have therefore no ability to capture heterogeneity and available information of its past. In the recent empirical works of Frydman and Schuermann \cite{Frydman2008}, it was found that bonds of the same credit rating, represented by the state space of the Markov process, can move at different speeds to other ratings. Furthermore, the inclusion of past credit ratings improves out-of-sample prediction of the Nelson-Aalen estimate of credit default intensity. These empirical findings suggest that the dynamics of credit rating can be represented by a mixture $X$ of Markov jump processes moving with different speeds (intensity matrices), where the mixture itself is non-Markov. In his recent work, Surya \cite{Surya2018} extended the mixture model \cite{Frydman2008} and gave explicit distributional identities of the mixture process. However, the analyses performed in \cite{Frydman2008} and \cite{Surya2018} were based on knowing the initial and current states of the process. We extend the results by relaxing this assumption. For this purpose, we define by $\mathcal{G}_{t}=\mathcal{F}_{t-}\cup\{X_t\neq \Delta\}$ the set of all previous information and knowing that there is no exit to absorbing set $\{\Delta\}$ has been observed at time $t$, i.e., $\mathcal{G}_{t}=\bigcup_{i\in E}\mathcal{F}_{t,i}$. Conditional on $\mathcal{F}_{t,i}$ and $\mathcal{G}_t$, we derive explicit formula for the joint distributions 
\begin{equation}\label{eq:MPHnew}
\begin{split}
\overline{F}_{i,t}(t_1,...,t_n)=&\mathbb{P}\big\{\tau_1> t_1,..., \tau_n > t_n \big\vert \mathcal{F}_{t,i}\big\}\\
\overline{F}_t(t_1,...,t_n)=&\mathbb{P}\big\{\tau_1> t_1,..., \tau_n > t_n \big\vert \mathcal{G}_{t}\big\},
\end{split}
\end{equation}
under the mixture process $X$, with $1 \leq n\in\mathbb{N}$, $i\in\mathbb{S}$ and $0<t\leq \min\{t_n,...,t_1\}$. 

In view of credit risk applications \cite{Bielecki2002}, the quantity $\overline{F}_{i,t}(t_1,...,t_n)$ describes the joint distribution of exit times $\{\tau_k\}$ (\ref{eq:MultiPH}), due to cause-specific exits (default, prepayment, calling back, etc), of $i-$rated bonds, conditional on the credit rating history up to the current time $t$, whilst the function $\overline{F}_{t}(t_1,...,t_n)$ determines the joint distribution of the bonds' exit times across credit ratings viewed at the time $t$. In the framework of competing risks (see for instance Pintilie \cite{Pintilie}), for the observed exit time $\tau:=\min\{\tau_1,...,\tau_n\}$ and reason of exit $\boldsymbol{\xi}=\textrm{argmin}\{\tau_1,...,\tau_n\}$, the probability $\mathbb{P}\{t\leq \tau<s,\boldsymbol{\xi}=1\vert \mathcal{F}_{t,i}\}$ determines the proportion of $i-$rated bonds exiting by type $1$ from the pool within $s-t$ period of time, whilst $\mathbb{P}\{t\leq \tau<s,\boldsymbol{\xi}=1\vert \mathcal{G}_{t}\}$ represents the percentage of bonds exiting by type $1$.

The organization of this paper is as follows. We discuss in Section 2 the Markov mixture process in details. Section 3 presents some preliminaries, which extend further the results in \cite{Frydman2008} and \cite{Surya2018}. The main contributions of this paper are presented in Section 4. Some explicit examples of the results are discussed in Section 5, in which we show that the exit times $\{\tau_k\}$ are independent under the Markov model, but not for the mixture model. Section 6 concludes this paper.

\section{Mixture of Markov jump processes}
Throughout the remaining of this paper we denote by $X=\{X_t^{(\phi)}, t\geq0\}$ the mixture process, which is a continuous-time stochastic process defined as a mixture of two Markov jump processes $X^{(0)}=\{X_t^{(0)}, t\geq 0\}$ and $X^{(1)}=\{X_t^{(1)}, t\geq 0\}$, whose intensity matrices are given respectively by $\mathbf{Q}$ and $\mathbf{G}$. We assume that the underlying processes $X^{(0)}$ and $X^{(1)}$ are right-continuous. The two processes are defined on the same finite state space $\mathbb{S}$. It is defined following \cite{Surya2018} by
\begin{equation}\label{eq:mixture}
X=
\begin{cases}
X^{(0)}, & \phi=0\\
X^{(1)}, & \phi=1,
\end{cases}
\end{equation}
where the variable $\phi$ represents the speed regimes, assumed to be unobservable. 

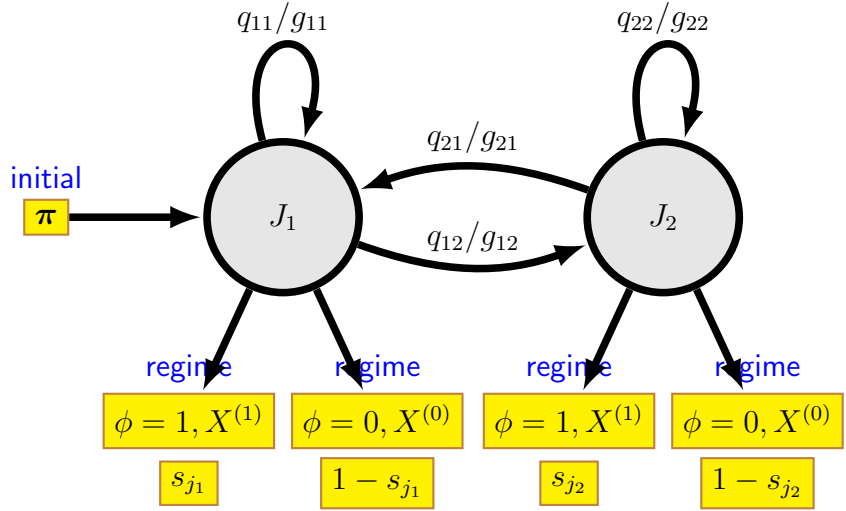
\begin{figure}
\begin{center}
  \begin{tikzpicture}[font=\sffamily]
 
        \tikzset{node style/.style={state, 
                                    minimum width=2cm,
                                    line width=1mm,
                                    fill=gray!20!white}}
                                                                                                 
          \tikzset{My Rectangle/.style={rectangle, draw=brown, fill=yellow, thick,
    prefix after command= {\pgfextra{\tikzset{every label/.style={blue}}, label=below}}
    }
}

          \tikzset{My Rectangle2/.style={rectangle,draw=brown,  fill=yellow, thick,
    prefix after command= {\pgfextra{\tikzset{every label/.style={blue}}, label=below}}
    }
}

          \tikzset{My Rectangle3/.style={rectangle, draw=brown, fill=yellow, thick,
    prefix after command= {\pgfextra{\tikzset{every label/.style={blue}}, label=below}}
    }
}
                   
        \node[node style] at (2, 0)     (s1)     {$J_1$};
        \node[node style] at (7, 0)     (s2)     {$J_2$};   
        
      \node [My Rectangle3, label={initial}] at  ([shift={(-5em,0em)}]s1.west) (p0) {$\boldsymbol{\pi}$};    
              
        \node [My Rectangle, label={regime} ] at  ([shift={(3em,-4em)}]s1.south) (g1) {$\phi=0, X^{(0)}$};
          \node [My Rectangle, label={regime} ] at  ([shift={(-3em,-4em)}]s1.south)  (g2) {$\phi=1, X^{(1)}$};
 
          \node [My Rectangle2] at  ([shift={(3em,-6em)}]s1.south) {$1-s_{j_1}$};
          \node [My Rectangle2] at  ([shift={(-3em,-6em)}]s1.south) {$s_{j_1}$};

         \node [My Rectangle, label={regime} ] at  ([shift={(3em,-4em)}]s2.south) (g3) {$\phi=0, X^{(0)}$};
          \node [My Rectangle, label={regime} ] at  ([shift={(-3em,-4em)}]s2.south)  (g4) {$\phi=1, X^{(1)}$};
          
           \node [My Rectangle2] at  ([shift={(3em,-6em)}]s2.south) {$1-s_{j_2}$};
          \node [My Rectangle2] at  ([shift={(-3em,-6em)}]s2.south) {$s_{j_2}$};
                   
        \draw[every loop,
              auto=right,
              line width=1mm,
              >=latex,
              draw=orange,
              fill=orange]
       
            (s1)  edge[bend right=20, auto=left] node {$q_{12}/g_{12}$} (s2)
            (s1)  edge[loop above]                     node {$q_{11}/g_{11}$} (s1)
            (s2)  edge[loop above]                     node {$q_{22}/g_{22}$} (s2)
            (s2)  edge[bend right=20]                node {$q_{21}/g_{21}$}  (s1)
            
            (s1) edge node {} (g1)
            (s1) edge node {} (g2)
            
            (s2) edge node {} (g3)
            (s2) edge node {} (g4)
              
            (p0) edge node {} (s1);
                                                
 \end{tikzpicture}
 \caption{State diagram of the Markov mixture process (\ref{eq:mixture}).}
\end{center}
\end{figure}


Markov mixture process is a generalization of mover-stayer model, a mixture of two discrete-time Markov chains proposed by Blumen et al \cite{Blumen} in 1955 to model population heterogeneity in jobs labor market. In the mover-stayer model \cite{Blumen}, the population of workers consists of stayers (workers who always stay in the same job category, $\mathbf{Q}=\mathbf{0}$) and movers (workers who move according to a stationary Markov chain with intensity matrix $\mathbf{G}$). Estimation of the mover-stayer model was discussed in Frydman \cite{Frydman1984}. Frydman \cite{Frydman2005} extended the model to a mixture of two continuous-time Markov chains moving with intensity matrices $\mathbf{Q}$ and $\mathbf{G}=\boldsymbol{\Psi}\mathbf{Q}$, where $\boldsymbol{\Psi}$ is a diagonal matrix. Frydman and Schuermann \cite{Frydman2008} later used the result to model the dynamics of firms' credit ratings. As empirically shown in \cite{Frydman2008}, there is strong evidence to suggest that firms of the same credit rating can move at different speeds to other credit ratings, a feature that lacks in the Markov model. Further distributional properties and identities of the mixture process were given in \cite{Surya2018}, in particular in the presence of absorbing states.

For a given initial state $i_0$, there is a separate mixing distribution defined by
\begin{equation}\label{eq:portion}
s_{i_0}=\mathbb{P}\{\phi=1 \vert X_0=i_0\} \quad \textrm{and} \quad 1-s_{i_0}=\mathbb{P}\{\phi=0 \vert X_0=i_0\},
\end{equation}
with $0\leq s_{i_0} \leq 1$. The quantity $s_{i_0}$ has the interpretation as the portion of firms with initial rating $i_0$ that evolve according to the process $X^{(1)}$, whilst $1-s_{i_0}$ is the proportion that propagates according to $X^{(0)}$. In general, $X^{(0)}$ and $X^{(1)}$ have different expected length of time the process occupies a state $i$, i.e., $1/q_{i}\neq 1/g_{i}$, but under \cite{Frydman2008} both processes have the same probability of leaving the state $i\in E$ to state $j\in\mathbb{S}$, $i\neq j$, i.e. $q_{ij}/q_{i}=g_{ij}/g_{i}$. Note that we have used $g_i$ and $g_{ij}$ to denote negative of the $i$th diagonal element and the $(i,j)$ entry of $\mathbf{G}$, respectively. Thus, depending on whether $0=\psi_i:=[\boldsymbol{\Psi}]_{i,i}$,  $0<\psi_i<1$, $\psi_i>1$ or $\psi_i=1$, $X^{(1)}$ never moves out of state $i$ (the mover-stayer model), moves out of state $i$ at lower rate, higher rate or at the same rate, subsequently, than that of $X^{(0)}$. If $\psi_i=1$, for all $i\in \mathbb{S}$, $X$ reduces to a simple Markov jump process $X^{(0)}$. 

Figure 1 illustrates the transition of $X$ (\ref{eq:mixture}) between states $J_1$ and $J_2$. When $X$ is observed in state $J_1$, it would stay in the state for an exponential period of time with intensity $q_{j_1}$ or $g_{j_1}$ before moving to state $J_2$ with probability $q_{j_1,j_2}/q_{j_1}$ or $g_{j_1,j_2}/g_{j_1}$ depending on whether it is driven by the Markov chain $X^{(0)}$ or $X^{(1)}$.

The main feature of the mixture process $X$ (\ref{eq:mixture}) is that unlike its component $X^{(0)}$ and $X^{(1)}$, $X$ does not have the Markov property; future development of its state depends on its past information. The section below discusses this further.

\section{Preliminaries}

Recall that the process $X$ (\ref{eq:mixture}) repeatedly changes its speed randomly in time either at the rate $\mathbf{Q}$ or $\mathbf{G}$. The speed regime, which is represented by the variable $\phi$, is however not directly observable; we can not classify from which regime the observed process $X$ came from. But, it can be identified based on past realizations of the process. We have denoted by $\mathcal{F}_{t-}$ all previous information about $X$ prior to time $t>0$, and by $\mathcal{F}_{t,i}=\mathcal{F}_{t-}\cup\{X_t=i\}$, $i\in \mathbb{S}$. The set $\mathcal{F}_{t-}$ may contain full observation, partial information or maybe nothing about the past of $X$.

The likelihoods of observing the past realization $\mathcal{F}_{t,j}$ of $X$ under $X^{(1)}$ and $X^{(0)}$ conditional on knowing its initial state $i$ are defined respectively by
\begin{equation}\label{eq:likelihood}
\begin{split}
L_{i,j}^Q(t):=&\mathbb{P}\{\mathcal{F}_{t,j} \vert \phi=0, X_0=i\}= \prod_{k\in \mathbb{S}} e^{-q_{k} T_k} \prod_{j\neq k, j\in \mathbb{S}} (q_{kj})^{N_{kj}},\\
L_{i,j}^G(t):=&\mathbb{P}\{\mathcal{F}_{t,j} \vert \phi=1, X_0=i\}= \prod_{k\in \mathbb{S}} e^{-g_{k} T_k} \prod_{j\neq k, j\in\mathbb{S}} (g_{kj})^{N_{kj}},
\end{split}
\end{equation}
where in the both expressions we have denoted subsequently by $T_k$ and $N_{kj}$ the total time the observed process $X$ spent in state $k\in \mathbb{S}$ for $\mathcal{F}_{t,j}$, and the number of transitions from state $k$ to state $j$, with $j\neq k$, observed in $\mathcal{F}_{t,j}$; whereas $q_{kj}$ and $g_{kj}$ represent the $(k,j)-$entry of the intensity matrices $\mathbf{Q}$ and $\mathbf{G}$, respectively.

\subsection{Bayesian updates of switching probability}

The Bayesian updates of switching probability $s_j(t)$ of $X$ (\ref{eq:mixture}) is defined by
\begin{equation}\label{eq:bayesianupdates}
s_j(t)=\mathbb{P}\{\phi=1\vert \mathcal{F}_{t,j}\}, \quad j\in\mathbb{S}, \; t\geq0.
\end{equation}
It represents the proportion of those in state $j$ moving according to Markov process $X^{(1)}$. Note that $s_j(0)=s_j$. Denote by $\widetilde{\mathbf{S}}(t)$ a diagonal matrix defined by 
\begin{equation}\label{eq:St}
\widetilde{\mathbf{S}}(t) = 
 \left(\begin{array}{cc}
 \mathbf{S}(t) & \mathbf{0} \\
  \mathbf{0} & s_{m+1}(t) \\
\end{array}\right),
\end{equation}
with $\mathbf{S}(t) =\mathrm{diag}(s_1(t), s_2(t),...,s_m(t))$, representing switching probability matrix of $X$.
For $t=0$, in which case $\mathcal{F}_{t,j}=\{X_0=j\}$, we write $\widetilde{\mathbf{S}}:= \widetilde{\mathbf{S}}(0)$, $\mathbf{S}:= \mathbf{S}(0)$. 

\pagebreak

Depending on the availability of the past information of $X$, the elements $s_j(t)$, $j\in \mathbb{S}$, of the switching probability matrix $\widetilde{\mathbf{S}}(t)$ (\ref{eq:St}) is given below.
\begin{prop}\label{prop:lem1}
Let $\boldsymbol{\pi}$ be the initial probability distribution of $X$ (\ref{eq:mixture}) on the state space $\mathbb{S}$. Define by $\mathbf{L}^{G}(t)$ and $\mathbf{L}^{Q}(t)$ the likelihood matrices whose $(i,j)$ elements $[\mathbf{L}^{G}(t)]_{i,j}$ and $[\mathbf{L}^{Q}(t)]_{i,j}$ are given by (\ref{eq:likelihood}). Then, for $j\in\mathbb{S}$ and $t\geq0$,
\begin{equation}\label{eq:likelihood2}
s_j(t)=\frac{\boldsymbol{\pi}^{\top}\widetilde{\mathbf{S}} \mathbf{L}^G(t)\mathbf{e}_j}{\boldsymbol{\pi}^{\top} \big[ \widetilde{\mathbf{S}} \mathbf{L}^G(t) + \big(\mathbf{I} -\widetilde{\mathbf{S}}\big) \mathbf{L}^Q(t) \big]\mathbf{e}_j}.
\end{equation}
To be more precise, depending on availability of information set $\mathcal{F}_{t-},$ we have
\begin{enumerate}
\item[(i)] Under full observation $\mathcal{F}_{t-}=\{X_s, 0<s\leq t-\}$ and conditional on knowing the initial state $i_0$, i.e., $\boldsymbol{\pi}=\mathbf{e}_{i_0}$, $s_j(t)$ simplifies further to 
\begin{equation*}
s_j(t)=\frac{s_{i_0}L_{i_0,j}^G(t)}{s_{i_0}L_{i_0,j}^G(t)+(1-s_{i_0})L_{i_0,j}^Q(t)}.
\end{equation*}
\item[(ii)] In case of $\mathcal{F}_{t-}=\emptyset$, $\mathbf{L}^Q(t)=e^{\mathbf{Q}t}$ and $\mathbf{L}^G(t)=e^{\mathbf{G}t}$, and therefore
\begin{equation*}
s_j(t)=\frac{\boldsymbol{\pi}^{\top} \widetilde{\mathbf{S}} e^{\mathbf{G}t}\mathbf{e}_j}{\boldsymbol{\pi}^{\top} \big[ \widetilde{\mathbf{S}} e^{\mathbf{G}t} + \big(\mathbf{I} - \widetilde{\mathbf{S}}\big) e^{\mathbf{Q}t} \big]\mathbf{e}_j}.
\end{equation*}
\item[(iii)] In case of $\mathcal{F}_{t-}=\emptyset$ and conditional on knowing the initial state $i_0$,
\begin{equation*}
s_j(t)=\frac{\mathbf{e}_{i_0}^{\top} \widetilde{\mathbf{S}} e^{\mathbf{G}t}\mathbf{e}_j}{\mathbf{e}_{i_0}^{\top} \big[ \widetilde{\mathbf{S}} e^{\mathbf{G}t} + \big(\mathbf{I} - \widetilde{\mathbf{S}}\big) e^{\mathbf{Q}t} \big]\mathbf{e}_j}.
\end{equation*}
\end{enumerate}
\end{prop}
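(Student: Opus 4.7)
The plan is to apply Bayes' theorem directly to the defining equation \eqref{eq:bayesianupdates}, decomposing over the two latent variables that are not observed: the speed regime $\phi$ and the initial state $X_0$. Once the joint probability $\mathbb{P}\{\phi=1,\mathcal{F}_{t,j}\}$ is rewritten as a sum over initial states, the expressions in \eqref{eq:likelihood} and \eqref{eq:portion} will immediately produce the matrix–vector quadratic form in \eqref{eq:likelihood2}, and the three special cases will follow by specialising the initial law or the past information set.

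First I would write
\begin{equation*}
s_j(t)=\mathbb{P}\{\phi=1\vert \mathcal{F}_{t,j}\}=\frac{\mathbb{P}\{\phi=1,\mathcal{F}_{t,j}\}}{\mathbb{P}\{\phi=1,\mathcal{F}_{t,j}\}+\mathbb{P}\{\phi=0,\mathcal{F}_{t,j}\}},
\end{equation*}
and then condition on the initial state $X_0$ via the tower property:
\begin{equation*}
\mathbb{P}\{\phi=1,\mathcal{F}_{t,j}\}=\sum_{i\in\mathbb{S}}\mathbb{P}\{\mathcal{F}_{t,j}\vert\phi=1,X_0=i\}\,\mathbb{P}\{\phi=1\vert X_0=i\}\,\mathbb{P}\{X_0=i\}.
\end{equation*}
By \eqref{eq:likelihood} the conditional probability of the observed history is $L_{i,j}^G(t)$; by \eqref{eq:portion} the conditional regime probability is $s_i$; and by assumption $\mathbb{P}\{X_0=i\}=\pi_i$. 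Recognising the sum $\sum_i\pi_i s_i [\mathbf{L}^G(t)]_{i,j}$ as the $j$th entry of the row vector $\boldsymbol{\pi}^{\top}\widetilde{\mathbf{S}}\mathbf{L}^G(t)$ gives the numerator. The denominator is handled identically, with the $\phi=0$ contribution producing $\boldsymbol{\pi}^{\top}(\mathbf{I}-\widetilde{\mathbf{S}})\mathbf{L}^Q(t)\mathbf{e}_j$. Adding the two terms yields \eqref{eq:likelihood2}.

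For the three specialisations, (i) follows by taking $\boldsymbol{\pi}=\mathbf{e}_{i_0}$, which collapses every sum over $i$ to its $i_0$ term. For (ii) and (iii) I would observe that when $\mathcal{F}_{t-}=\emptyset$ we have $\mathcal{F}_{t,j}=\{X_t=j\}$, so the path–likelihoods $L_{i,j}^Q(t)$ and $L_{i,j}^G(t)$ reduce to the single–time transition probabilities $\mathbb{P}\{X_t=j\vert\phi=0,X_0=i\}$ and $\mathbb{P}\{X_t=j\vert\phi=1,X_0=i\}$, which by \eqref{eq:transprob} and the Markov property of the regime–conditional processes equal $[e^{\mathbf{Q}t}]_{i,j}$ and $[e^{\mathbf{G}t}]_{i,j}$ respectively. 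Substituting $\mathbf{L}^Q(t)=e^{\mathbf{Q}t}$ and $\mathbf{L}^G(t)=e^{\mathbf{G}t}$ in \eqref{eq:likelihood2} then produces (ii), and additionally fixing $\boldsymbol{\pi}=\mathbf{e}_{i_0}$ produces (iii).

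The routine parts are purely algebraic. The only genuinely subtle step is the justification that, conditional on the regime $\phi$ and the initial state $X_0=i$, the process $X$ is exactly $X^{(\phi)}$ and therefore a genuine Markov jump process whose path likelihood is the explicit product form in \eqref{eq:likelihood}; without this, the mixture's lack of Markov property might obscure the decomposition. I would flag this conditional-Markov property (already built into the construction \eqref{eq:mixture}) at the start of the proof so that the product-form likelihoods can be invoked without further comment.
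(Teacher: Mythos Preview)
Your proposal is correct and follows essentially the same route as the paper: both compute $\mathbb{P}\{\phi=1,\mathcal{F}_{t,j}\}$ by conditioning on $X_0$ via the law of total probability, identify the three factors as $\pi_i$, $s_i$, and $L_{i,j}^G(t)$, rewrite the sum in matrix--vector form, and then apply Bayes' formula. Your treatment of the special cases (i)--(iii) is in fact more explicit than the paper's, which proves only the main identity \eqref{eq:likelihood2} and leaves the specialisations to the reader.
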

The expression (\ref{eq:likelihood2}) for $s_j(t)$ generalizes the result of Lemma 3.1 in \cite{Surya2018}. It follows from (\ref{eq:likelihood2}) that when the underlying Markov processes $X^{(1)}$ and $X^{(0)}$ move at the same speed, i.e., $\mathbf{G}=\mathbf{Q}$, we see that $s_j(t)=1$ for all $j\in\mathbb{S}$ and $t\geq 0$, implying that the observed process $X$ is driven by the Markov chain $X^{(1)}$.   

\medskip

\begin{proof}[Proposition \ref{prop:lem1}]
By the law of total probability and the Bayes' formula,
\begin{align*}
\mathbb{P}\{\mathcal{F}_{t,j},\phi=1\}=&\sum_{i\in\mathbb{S}}\mathbb{P}\{X_0=i\}\mathbb{P}\{\phi=1\vert X_0=i\}\mathbb{P}\{\mathcal{F}_{t,j}\vert \phi=1,X_0=i\}\\
=&\sum_{i\in\mathbb{S}}\pi_i\times s_i \times L_{i,j}^G(t)
=\boldsymbol{\pi}^{\top} \widetilde{\mathbf{S}}\mathbf{L}^G(t)\mathbf{e}_j.
\end{align*}
Similarly, one will obtain after applying the same method as above that
\begin{equation*}
\mathbb{P}\{\mathcal{F}_{t,j},\phi=0\}=\boldsymbol{\pi}^{\top}\big(\mathbf{I}-\widetilde{\mathbf{S}}\big)\mathbf{L}^Q(t)\mathbf{e}_j.
\end{equation*}
The claim in (\ref{eq:likelihood2}) is finally established on account of the Bayes' formula:
\begin{align*}
s_j(t)=\mathbb{P}\{\phi=1\vert\mathcal{F}_{t,j}\}=\frac{\mathbb{P}\{\mathcal{F}_{t,j},\phi=1\}}{\mathbb{P}\{\mathcal{F}_{t,j},\phi=1\}+\mathbb{P}\{\mathcal{F}_{t,j},\phi=0\}}.  \exit
\end{align*}
\end{proof}

\subsection{Conditional transition probability matrix}
The following result on the transition probability matrix of $X$ plays an important role in deriving explicit identities for the joint probability distributions (\ref{eq:MPHnew}).
\begin{theo}[Surya \cite{Surya2018}]\label{theo:theo1}
For any $s\geq t\geq 0$, the conditional transition probability matrix $[\mathbf{P}(t,s)]_{i,j}:=\mathbb{P}\{X_s=j\vert\mathcal{F}_{t,i}\}$ of the mixture process $X$ is given by
\begin{equation}\label{eq:transM}
\mathbf{P}(t,s)= \widetilde{\mathbf{S}}(t)e^{\mathbf{G}(s-t)}
+ \big[\mathbf{I}-\widetilde{\mathbf{S}}(t)\big]e^{\mathbf{Q}(s-t)}.
\end{equation}
\end{theo}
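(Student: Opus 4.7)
The plan is to condition on the hidden regime variable $\phi$ and exploit the fact that, once $\phi$ is fixed, the observed process coincides with a genuine Markov jump process. First, by the law of total probability applied to the partition $\{\phi=0\},\{\phi=1\}$ under the conditional measure $\mathbb{P}\{\cdot\mid\mathcal{F}_{t,i}\}$, I would write
\begin{equation*}
\mathbb{P}\{X_s=j\mid\mathcal{F}_{t,i}\}
=\mathbb{P}\{\phi=1\mid\mathcal{F}_{t,i}\}\,\mathbb{P}\{X_s=j\mid \phi=1,\mathcal{F}_{t,i}\}
+\mathbb{P}\{\phi=0\mid\mathcal{F}_{t,i}\}\,\mathbb{P}\{X_s=j\mid \phi=0,\mathcal{F}_{t,i}\}.
\end{equation*}
The two mixing weights are, by the definition (\ref{eq:bayesianupdates}) of the Bayesian update, exactly $s_i(t)$ and $1-s_i(t)$, which are the $i$th diagonal entries of $\widetilde{\mathbf{S}}(t)$ and $\mathbf{I}-\widetilde{\mathbf{S}}(t)$.

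Second, I would argue that on the event $\{\phi=1\}$ the observed process $X$ is literally $X^{(1)}$ on $[0,\infty)$ by (\ref{eq:mixture}), so the conditioning event $\mathcal{F}_{t,i}$ reduces to an event measurable with respect to the history of the Markov process $X^{(1)}$ that ends with $X^{(1)}_t=i$. The Markov property of $X^{(1)}$ together with its homogeneity then yields
\begin{equation*}
\mathbb{P}\{X_s=j\mid \phi=1,\mathcal{F}_{t,i}\}=\mathbb{P}\{X^{(1)}_s=j\mid X^{(1)}_t=i\}=\bigl[e^{\mathbf{G}(s-t)}\bigr]_{i,j},
\end{equation*}
and the same argument under $\{\phi=0\}$ with $X^{(0)}$ gives $\bigl[e^{\mathbf{Q}(s-t)}\bigr]_{i,j}$. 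Substituting these into the decomposition above and reading off the result as an $(i,j)$ entry of a matrix product recovers (\ref{eq:transM}).

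The only delicate step is the second one: justifying that conditioning on the non-Markov information $\mathcal{F}_{t,i}$ but augmented by $\{\phi=1\}$ collapses to conditioning on $\{X^{(1)}_t=i\}$ and a history of $X^{(1)}$, so that the Markov property applies. This is where the construction (\ref{eq:mixture}) of the mixture as a \emph{single} underlying process selected by $\phi$ (rather than, say, switching regimes along the trajectory) is essential; I would state this explicitly to rule out any subtlety from the non-Markov nature of $X$ itself. Once that point is granted, the remainder is algebra: pairing the scalar mixing weights $s_i(t)$ and $1-s_i(t)$ with the $i$th row of $e^{\mathbf{G}(s-t)}$ and $e^{\mathbf{Q}(s-t)}$ respectively, and recognizing the result as left multiplication by the diagonal matrices $\widetilde{\mathbf{S}}(t)$ and $\mathbf{I}-\widetilde{\mathbf{S}}(t)$.
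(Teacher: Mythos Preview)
Your argument is correct and is the natural one: condition on the latent regime $\phi$, use that on each piece the observed process is a genuine homogeneous Markov chain so that the past history $\mathcal{F}_{t-}$ drops out, and then package the scalar weights $s_i(t),1-s_i(t)$ as left multiplication by the diagonal matrices $\widetilde{\mathbf{S}}(t),\mathbf{I}-\widetilde{\mathbf{S}}(t)$.

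Note, however, that the paper does \emph{not} supply its own proof of this theorem: it is stated with attribution to \cite{Surya2018} and used as an input for the subsequent results. So there is no in-paper proof to compare against. Your derivation is exactly the standard one (and is in fact the argument used in \cite{Surya2018}); the point you flag as ``delicate''---that $\{\phi=1\}\cap\mathcal{F}_{t,i}$ is an event in the history of $X^{(1)}$ terminating in $\{X^{(1)}_t=i\}$, so the Markov property applies---is precisely the content of the mixture construction (\ref{eq:mixture}), and your explicit mention of it is appropriate.
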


It is clear from (\ref{eq:transM}) that $X$ does not inherit the Markov property of the underlying processes $X^{(1)}$ and $X^{(0)}$, i.e., future development of $X$ is determined by its past information $\mathcal{F}_{t,i}$ through its likelihoods. To be more precise, when we set $\mathbf{G}=\mathbf{Q}$ in (\ref{eq:transM}), $\mathbf{P}(t,s)=e^{\mathbf{Q}(s-t)}$, i.e., $X$ is a simple Markov jump process.

\section{Probability distributions of phase-type}\label{sec:mainsection}

This section presents the main results of this paper on the joint probability distributions of lifetime $\tau_k$ (\ref{eq:MultiPH}) under the mixture process $X$ (\ref{eq:mixture}), conditional on the information set $\mathcal{F}_{t,i}$ and $\mathcal{G}_{t}$. We assume that $X$ is defined on the finite state space $\mathbb{S}=E\cup\{\Delta\}$, where $E=\{1,2,...,m\}$ and $\Delta$ are transient and absorbing states, subsequently. We first discuss univariate $\mathcal{G}_{t}-$conditional distribution $\tau$ (\ref{eq:DefTime}) of $X$. To motivate and illustrate the main results on the multivariate distributions (\ref{eq:MPHnew}), we consider the bivariate case in some details. Throughout the remaining of this paper, define intensity matrices $\mathbf{G}$ and $\mathbf{Q}$ respectively by
\begin{equation}\label{eq:intensity}
\mathbf{G} = \left(\begin{array}{cc}
  \mathbf{B} & -\mathbf{B}\mathbb{1} \\
  \mathbf{0} & 0 \\
\end{array}\right)
\quad \textrm{and} \quad
\mathbf{Q} = \left(\begin{array}{cc}
 \mathbf{A} & -\mathbf{A}\mathbb{1} \\
  \mathbf{0} & 0 \\
\end{array}\right).
\end{equation}

The following results on block partition of the transition probability matrix $\mathbf{P}(t,s)$ (\ref{eq:transM}) and exponential matrix will be used to derive the conditional distributions (\ref{eq:MPHnew}) in closed form. We refer to Proposition 3.7 in \cite{Surya2018} for details. 

\begin{lem}
Let the phase generator matrix $\mathbf{A}$ be nonsingular. Then,
\begin{align}\label{eq:blockpartisi}
e^{\mathbf{Q}}=
\left(\begin{array}{cc}
e^{\mathbf{A}} & \mathbb{1}-e^{\mathbf{A}}\mathbb{1}\\
  \mathbf{0} & 1 \\
\end{array}\right).
\end{align}
\end{lem}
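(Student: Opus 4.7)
The plan is to compute $e^{\mathbf{Q}}$ directly from its power series $\sum_{k=0}^{\infty}\mathbf{Q}^{k}/k!$ and exploit the block-triangular structure of $\mathbf{Q}$ in (\ref{eq:intensity}). First I would establish by induction on $k$ that for every $k\geq 1$,
\[
\mathbf{Q}^{k} = \begin{pmatrix} \mathbf{A}^{k} & -\mathbf{A}^{k}\mathbb{1} \\ \mathbf{0} & 0 \end{pmatrix}.
\]
The base case is the definition of $\mathbf{Q}$. For the inductive step I would compute $\mathbf{Q}\cdot\mathbf{Q}^{k-1}$ blockwise: the bottom row of $\mathbf{Q}^{k-1}$ is $(\mathbf{0},0)$, so the bottom row of $\mathbf{Q}^{k}$ remains $(\mathbf{0},0)$; the top-left block is $\mathbf{A}\cdot\mathbf{A}^{k-1}=\mathbf{A}^{k}$; and the top-right column is $\mathbf{A}\cdot(-\mathbf{A}^{k-1}\mathbb{1})+(-\mathbf{A}\mathbb{1})\cdot 0 = -\mathbf{A}^{k}\mathbb{1}$.

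Next I would sum the exponential series block by block. Writing
\[
e^{\mathbf{Q}} = \mathbf{I} + \sum_{k=1}^{\infty} \frac{1}{k!}\begin{pmatrix} \mathbf{A}^{k} & -\mathbf{A}^{k}\mathbb{1} \\ \mathbf{0} & 0 \end{pmatrix},
\]
the top-left $m\times m$ block collapses to $\mathbf{I}+\sum_{k\geq 1}\mathbf{A}^{k}/k! = e^{\mathbf{A}}$; the top-right column collapses to $\mathbf{0}-\sum_{k\geq 1}\mathbf{A}^{k}\mathbb{1}/k! = -(e^{\mathbf{A}}-\mathbf{I})\mathbb{1} = \mathbb{1}-e^{\mathbf{A}}\mathbb{1}$; and the bottom row of $\mathbf{I}$ simply remains $(\mathbf{0},1)$. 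This yields (\ref{eq:blockpartisi}).

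There is no real obstacle: the only thing to keep straight is the bookkeeping that the bottom row of every positive power of $\mathbf{Q}$ is $(\mathbf{0},0)$, which is what makes the induction, and then the series sum, telescope cleanly. The hypothesis that $\mathbf{A}$ is nonsingular is not needed for the algebraic identity itself, but is consistent with the paper's standing assumption of certain absorption; one would invoke it only if one wished to rewrite $\mathbb{1}-e^{\mathbf{A}}\mathbb{1}$ using $\mathbf{A}^{-1}$, which is not required here.
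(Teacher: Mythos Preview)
Your argument is correct. The induction on $k$ showing $\mathbf{Q}^{k}=\begin{pmatrix}\mathbf{A}^{k} & -\mathbf{A}^{k}\mathbb{1}\\ \mathbf{0} & 0\end{pmatrix}$ for $k\geq 1$ is clean, and the blockwise summation of the exponential series then gives (\ref{eq:blockpartisi}) immediately.

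For comparison: the paper does not actually prove this lemma in-text; it simply refers the reader to Proposition~3.7 of \cite{Surya2018}. Your self-contained power-series computation is the standard route and is exactly what one would expect that cited proposition to contain. Your closing remark is also apt: the nonsingularity of $\mathbf{A}$ plays no role in the identity (\ref{eq:blockpartisi}) itself---it is only the standing assumption guaranteeing certain absorption and is used elsewhere (e.g., when inverting $\mathbf{A}$ or $\mathbf{B}$ in the moment and Laplace-transform formulas).
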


\begin{prop}
The transition probability matrix (\ref{eq:transM}) has block partition:
\begin{equation}\label{eq:blockPts}
\mathbf{P}(t,s)= \left(\begin{array}{cc}
\mathbf{P}_{11}(t,s) &\mathbf{P}_{12}(t,s)\\
  \mathbf{0} & 1 \\
\end{array}\right),
\end{equation}
where the matrix entries $\mathbf{P}_{11}(t,s)$ and $\mathbf{P}_{12}(t,s)$ are respectively defined by
\begin{align*}
\mathbf{P}_{11}(t,s)\;=\;&\mathbf{S}(t)e^{\mathbf{B}(s-t)}+ \big[\mathbf{I}-\mathbf{S}(t)\big]e^{\mathbf{A}(s-t)}\\
\mathbf{P}_{12}(t,s)\;=\;&\mathbf{S}(t)\big(\mathbf{I}-e^{\mathbf{B}(s-t)}\big)\mathbb{1} + \big[\mathbf{I}-\mathbf{S}(t)\big]\big(\mathbf{I}-e^{\mathbf{A}(s-t)}\big)\mathbb{1}.
\end{align*}
\end{prop}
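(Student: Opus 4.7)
The plan is purely block-matrix algebra: substitute the explicit block forms of $e^{\mathbf{Q}(s-t)}$, $e^{\mathbf{G}(s-t)}$ and $\widetilde{\mathbf{S}}(t)$ into the representation (\ref{eq:transM}) of Theorem~\ref{theo:theo1}, then multiply out and read off the four blocks.

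First, I would apply the preceding lemma, which gives the block partition (\ref{eq:blockpartisi}) for $e^{\mathbf{Q}}$, to the rescaled generator $\mathbf{Q}(s-t)$. Since $\mathbf{G}$ has exactly the same block shape as $\mathbf{Q}$ in (\ref{eq:intensity}) with phase generator $\mathbf{B}$ in place of $\mathbf{A}$, the same identity yields
\[
e^{\mathbf{Q}(s-t)} = \begin{pmatrix} e^{\mathbf{A}(s-t)} & (\mathbf{I}-e^{\mathbf{A}(s-t)})\mathbb{1} \\ \mathbf{0} & 1 \end{pmatrix}, \qquad e^{\mathbf{G}(s-t)} = \begin{pmatrix} e^{\mathbf{B}(s-t)} & (\mathbf{I}-e^{\mathbf{B}(s-t)})\mathbb{1} \\ \mathbf{0} & 1 \end{pmatrix}.
\]
Combined with the block-diagonal form of $\widetilde{\mathbf{S}}(t)$ from (\ref{eq:St}) and the analogous decomposition of $\mathbf{I}-\widetilde{\mathbf{S}}(t)$, each of the two summands $\widetilde{\mathbf{S}}(t)e^{\mathbf{G}(s-t)}$ and $[\mathbf{I}-\widetilde{\mathbf{S}}(t)]e^{\mathbf{Q}(s-t)}$ is block upper triangular, so their sum is too.

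Next, I would collect the four blocks of $\mathbf{P}(t,s)$. The $(2,1)$ block vanishes because each exponential has zero $(2,1)$ block, giving the $\mathbf{0}$ in the claim. The $(1,1)$ block adds directly to $\mathbf{S}(t)e^{\mathbf{B}(s-t)} + [\mathbf{I}-\mathbf{S}(t)]e^{\mathbf{A}(s-t)} = \mathbf{P}_{11}(t,s)$. The $(1,2)$ block becomes $\mathbf{S}(t)(\mathbf{I}-e^{\mathbf{B}(s-t)})\mathbb{1} + [\mathbf{I}-\mathbf{S}(t)](\mathbf{I}-e^{\mathbf{A}(s-t)})\mathbb{1}$, which is exactly $\mathbf{P}_{12}(t,s)$. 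Finally, the scalar $(2,2)$ entry equals $s_{m+1}(t)\cdot 1 + [1-s_{m+1}(t)]\cdot 1 = 1$, completing the stated partition.

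There is no genuine obstacle here; the only points of care are the consistent use of (\ref{eq:blockpartisi}) for both $\mathbf{Q}$ and $\mathbf{G}$, and the observation that the posterior mass $s_{m+1}(t)$ on the absorbing state drops out of the $(2,2)$ entry precisely because $\Delta$ is absorbing under \emph{both} underlying generators, so the convex combination of two $1$'s is again $1$. As a sanity check, one can verify $\mathbf{P}(t,s)\mathbb{1}=\mathbb{1}$ directly from the closed forms of $\mathbf{P}_{11}$ and $\mathbf{P}_{12}$, recovering the stochasticity of the mixture's transition matrix.
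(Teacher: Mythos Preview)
Your proposal is correct. The paper itself does not supply a proof of this proposition; it simply refers the reader to Proposition~3.7 in \cite{Surya2018} for details, and your direct block-matrix computation using (\ref{eq:blockpartisi}), (\ref{eq:St}) and (\ref{eq:transM}) is exactly the natural argument one would expect behind that reference.
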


Furthermore, in the sequel below we denote by $\boldsymbol{\pi}(t)$ the time$-t$ probability distribution of $X$ on the state space $\mathbb{S}$, whose $i$th element $\pi_i(t)$ is defined by
\begin{equation}
\pi_i(t)=\mathbb{P}\{X_t=i \big\vert \mathcal{G}_t\}, \; \textrm{for $i\in E$}, \quad \textrm{and} \quad \pi_i(t)=0, \; \textrm{for $i\in \Delta$}.
\end{equation}

\subsection{Bayesian updates of probability distribution $\boldsymbol{\pi}$}
The following proposition gives the distribution $\boldsymbol{\pi}(t)$ of $X$ on $E$ at time $t\geq0$.  
\begin{prop}\label{prop:propinitialprobab}
For a given $j\in E$ and $t\geq 0$, define $\pi_j(t)=\mathbb{P}\{X_t=j\vert \mathcal{G}_{t}\}$. 
\begin{align}\label{eq:piatt}
\pi_j(t)=\frac{\boldsymbol{\pi}^{\top} \big[\mathbf{S} \mathbf{L}^G(t) + \big(\mathbf{I} -\mathbf{S}\big) \mathbf{L}^Q(t) \big]\mathbf{e}_j}{\boldsymbol{\pi}^{\top} \big[\mathbf{S} \mathbf{L}^G(t) + \big(\mathbf{I} -\mathbf{S}\big) \mathbf{L}^Q(t) \big]\mathbb{1}}.
\end{align}
To be more precise, depending on availability of information set $\mathcal{F}_{t-},$ we have
\begin{enumerate}
\item[(i)] Under full observation $\mathcal{F}_{t-}=\{X_s, 0<s\leq t-\}$ and conditional on knowing the initial state $i_0$, i.e., $\boldsymbol{\pi}=\mathbf{e}_{i_0}$, $\pi_j(t)$ simplifies further to 
\begin{equation*}
\pi_j(t)=\frac{s_{i_0}L_{i_0,j}^G(t) + (1-s_{i_0})L_{i_0,j}^Q(t)}{\sum_{j\in E}\big(s_{i_0}L_{i_0,j}^G(t) + (1-s_{i_0})L_{i_0,j}^Q(t) \big)}.
\end{equation*}
\item[(ii)] In case of $\mathcal{F}_{t-}=\emptyset$, $\mathbf{L}^Q(t)=e^{\mathbf{Q}t}$ and $\mathbf{L}^G(t)=e^{\mathbf{G}t}$, and therefore
\begin{equation*}
\pi_j(t)=\frac{\boldsymbol{\pi}^{\top}\big[ \mathbf{S} e^{\mathbf{B}t} + (\mathbf{I}-\mathbf{S}) e^{\mathbf{A}t}\big] \mathbf{e}_j}{\boldsymbol{\pi}^{\top} \big[\mathbf{S} e^{\mathbf{B}t} + (\mathbf{I} -\mathbf{S}) e^{\mathbf{A}t} \big]\mathbb{1}}.
\end{equation*}
\item[(iii)] In case of $\mathcal{F}_{t-}=\emptyset$ and conditional on knowing the initial state $i_0$,
\begin{equation*}
\pi_j(t)=\frac{\mathbf{e}_{i_0}^{\top}\big[ \mathbf{S} e^{\mathbf{B}t} + (\mathbf{I}-\mathbf{S}) e^{\mathbf{A}t}\big]\mathbf{e}_j}{\mathbf{e}_{i_0}^{\top} \big[\mathbf{S} e^{\mathbf{B}t} + (\mathbf{I} -\mathbf{S}) e^{\mathbf{A}t} \big]\mathbb{1}}.
\end{equation*}
\end{enumerate}
\end{prop}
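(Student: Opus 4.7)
The plan is to imitate the Bayes-rule computation used in the proof of Proposition~\ref{prop:lem1}, but now to condition on $\mathcal{G}_t$ rather than on $\mathcal{F}_{t,j}$. The key observation is that for any $j\in E$ one has $j\neq\Delta$, hence
\begin{equation*}
\{X_t=j\}\cap\mathcal{G}_t \;=\; \{X_t=j\}\cap\mathcal{F}_{t-} \;=\; \mathcal{F}_{t,j},
\end{equation*}
while $\mathcal{G}_t=\bigcup_{k\in E}\mathcal{F}_{t,k}$ is a disjoint union since the events $\{X_t=k\}$, $k\in E$, are mutually exclusive. Bayes' rule then gives
\begin{equation*}
\pi_j(t)\;=\;\frac{\mathbb{P}\{\mathcal{F}_{t,j}\}}{\sum_{k\in E}\mathbb{P}\{\mathcal{F}_{t,k}\}}.
\end{equation*}

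Second, I would recycle the computation already performed in the proof of Proposition~\ref{prop:lem1} to split each marginal by conditioning on the regime variable $\phi$. Combining the law of total probability with the likelihood identities (\ref{eq:likelihood}) yields, for $j\in E$,
\begin{equation*}
\mathbb{P}\{\mathcal{F}_{t,j},\phi=1\}=\boldsymbol{\pi}^{\top}\mathbf{S}\mathbf{L}^G(t)\mathbf{e}_j, \qquad \mathbb{P}\{\mathcal{F}_{t,j},\phi=0\}=\boldsymbol{\pi}^{\top}(\mathbf{I}-\mathbf{S})\mathbf{L}^Q(t)\mathbf{e}_j.
\end{equation*}
Adding these yields the numerator of (\ref{eq:piatt}); summing the same expression over $k\in E$ (i.e.\ replacing $\mathbf{e}_j$ by the vector $\mathbb{1}$ of ones on $E$) yields the denominator.

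The three sub-cases are then routine specializations. For (i), substituting $\boldsymbol{\pi}=\mathbf{e}_{i_0}$ and using $[\widetilde{\mathbf{S}}]_{i_0,i_0}=s_{i_0}$ collapses the matrix products into the stated scalar sums in $s_{i_0}L^G_{i_0,j}(t)$ and $(1-s_{i_0})L^Q_{i_0,j}(t)$. For (ii), in the memoryless case $\mathcal{F}_{t-}=\emptyset$ the conditional likelihood matrices reduce to the Kolmogorov transition matrices $\mathbf{L}^Q(t)=e^{\mathbf{Q}t}$ and $\mathbf{L}^G(t)=e^{\mathbf{G}t}$, and the block-partition identity (\ref{eq:blockpartisi}) extracts the transient-to-transient blocks $e^{\mathbf{A}t}$ and $e^{\mathbf{B}t}$. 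Case (iii) combines (i) and (ii).

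The only delicate point, and the one I expect to require the most care, is the bookkeeping of matrix dimensions: $\widetilde{\mathbf{S}}$, $\mathbf{L}^G(t)$ and $\mathbf{L}^Q(t)$ are indexed by the full state space $\mathbb{S}$ of size $m+1$, whereas $\mathbf{S}$ together with $e^{\mathbf{A}t}$ and $e^{\mathbf{B}t}$ in (\ref{eq:piatt}) live on the $m$-element transient set $E$. One must check that, because $\pi_\Delta=0$ and $j\in E$, the absorbing row and column contribute nothing to every matrix product above, so the $(m+1)\times(m+1)$ expressions may be replaced throughout by their transient $m\times m$ blocks; this is precisely what justifies dropping the tilde in the statement of the proposition.
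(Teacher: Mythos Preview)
Your proposal is correct and follows essentially the same route as the paper's own proof: both compute $\mathbb{P}\{\mathcal{F}_{t,j}\}$ by conditioning on $\phi$ (and implicitly on $X_0$) via the law of total probability and the likelihoods (\ref{eq:likelihood}), then apply Bayes' rule in the form $\pi_j(t)=\mathbb{P}\{\mathcal{F}_{t,j}\}/\sum_{k\in E}\mathbb{P}\{\mathcal{F}_{t,k}\}$, with cases (ii)--(iii) handled through the block partition (\ref{eq:blockpartisi}) and $\pi_\Delta=0$. Your explicit remark on the $\widetilde{\mathbf{S}}$ versus $\mathbf{S}$ dimension bookkeeping is a welcome clarification that the paper leaves implicit.
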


It follows that $0<\pi_j(t)<1$, $\sum_{j\in E}\pi_j(t)=1$ for $t\geq 0$, and $\boldsymbol{\pi}=\boldsymbol{\pi}(0)$. 

\medskip

\begin{proof}
The proof follows from applying the law of total probability and the Bayes' formula for conditional probability. By applying the latter, we have
\begin{align*}
\mathbb{P}\{\mathcal{F}_{t,j},\phi=1,X_0=i\}=&\mathbb{P}\{X_0=i\}\mathbb{P}\{\phi=1\vert X_0=i\}\mathbb{P}\{\mathcal{F}_{t,j}\vert \phi=1,X_0=i\}\\
=&\pi_i\times s_i\times L_{i,j}^G(t). 
\end{align*}
By the same approach, $\mathbb{P}\{\mathcal{F}_{t,j},\phi=0,X_0=i\}=\pi_i\times (1-s_i)\times L_{i,j}^Q(t).$ Hence, %
\begin{align*}
\mathbb{P}\{\mathcal{F}_{t,j},X_0=i\}=&\mathbb{P}\{\mathcal{F}_{t,j},\phi=1,X_0=i\}+\mathbb{P}\{\mathcal{F}_{t,j},\phi=0,X_0=i\}\\
=&\pi_i\times s_i\times L_{i,j}^G(t)+\pi_i\times (1-s_i)\times L_{i,j}^Q(t).
\end{align*}
Therefore, we have by the above and applying the law of total probability that 
\begin{align*}
\mathbb{P}\{\mathcal{F}_{t,j}\}=&\sum_{i\in \mathbb{S}}\mathbb{P}\{\mathcal{F}_{t,j},X_0=i\}\\
=&\boldsymbol{\pi}^{\top}\big(\mathbf{S}\mathbf{L}^G(t) + \big[\mathbf{I}-\mathbf{S}\big]\mathbf{L}^Q(t)\big)\mathbf{e}_j.
\end{align*}
The result (\ref{eq:piatt}) is established by the Bayes' rule and the law of total probability,
\begin{align*}
\pi_j(t)=\mathbb{P}\{X_t=j\vert \mathcal{G}_{t}\}=\frac{\mathbb{P}\{\mathcal{F}_{t,j}\}}{\sum_{k\in E}\mathbb{P}\{\mathcal{F}_{t,k}\}},
\end{align*}
whereas the cases $(ii)$ and $(iii)$ follow on account of (\ref{eq:blockpartisi}) and that $\pi_{\Delta}=0$. \exit
\end{proof}

\medskip

The result of Proposition \ref{prop:propinitialprobab} gives an additional feature to the distributional properties of the mixture of Markov jump processes discussed in \cite{Surya2018} and \cite{Frydman2008}.

\subsection{Univariate conditional phase-type distributions}

In this section we derive an explicit identity for $\overline{F}_t(s)=\mathbb{P}\{\tau > s \vert \mathcal{G}_{t}\}$, for $s\geq t\geq 0$, which extends the conditional probability (see Theorem 4.1 in \cite{Surya2018}):
\begin{align}\label{eq:PH0}
\overline{F}_{i,t}(s)=\mathbb{P}\{\tau > s \vert \mathcal{F}_{t,i}\}=\mathbf{e}_i^{\top}\Big(\mathbf{S}(t) e^{\mathbf{B}(s-t)}+ \big[\mathbf{I}-\mathbf{S}(t)\big] e^{\mathbf{A}(s-t)}\Big)\mathbb{1}.
\end{align}
\begin{lem}
The $\mathcal{G}_{t}-$conditional distribution $\overline{F}_t(s)$ is given for $s\geq t\geq 0$ by
\begin{equation}\label{eq:PH1}
\overline{F}_t(s) = \boldsymbol{\pi}^{\top}(t)\Big(\mathbf{S}(t) e^{\mathbf{B}(s-t)}+ \big[\mathbf{I}-\mathbf{S}(t)\big] e^{\mathbf{A}(s-t)}\Big)\mathbb{1}.
\end{equation}
\end{lem}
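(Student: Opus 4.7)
The plan is to reduce the $\mathcal{G}_t$-conditional distribution to a mixture of the $\mathcal{F}_{t,i}$-conditional distributions already computed in equation (\ref{eq:PH0}), weighted by the Bayesian-updated state distribution $\boldsymbol{\pi}(t)$ from Proposition \ref{prop:propinitialprobab}. The key observation is that $\mathcal{G}_t = \mathcal{F}_{t-}\cup\{X_t\neq\Delta\}$ decomposes as the disjoint union $\bigcup_{i\in E}\mathcal{F}_{t,i}$, since on $\mathcal{G}_t$ exactly one of the events $\{X_t=i\}$, $i\in E$, holds.

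First, I would apply the law of total probability conditionally on $\mathcal{G}_t$, partitioning on the value of $X_t$ within the transient set $E$:
\begin{align*}
\overline{F}_t(s) \;=\; \mathbb{P}\{\tau>s\mid \mathcal{G}_t\} \;=\; \sum_{i\in E}\mathbb{P}\{\tau>s\mid \mathcal{F}_{t-},X_t=i\}\,\mathbb{P}\{X_t=i\mid \mathcal{G}_t\}
\;=\; \sum_{i\in E}\overline{F}_{i,t}(s)\,\pi_i(t),
\end{align*}
where the first equality uses that $\mathcal{F}_{t-}$ is already encoded in $\mathcal{G}_t$, the second uses the definitions $\mathcal{F}_{t,i}=\mathcal{F}_{t-}\cup\{X_t=i\}$ and $\pi_i(t)=\mathbb{P}\{X_t=i\mid \mathcal{G}_t\}$ from Proposition \ref{prop:propinitialprobab}.

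Next I would substitute the identity (\ref{eq:PH0}) for $\overline{F}_{i,t}(s)$. Collecting the row-vectors $\mathbf{e}_i^{\top}$ weighted by $\pi_i(t)$ for $i\in E$, and recalling that $\pi_\Delta(t)=0$ so the sum over $E$ equals the sum over $\mathbb{S}$, yields
\begin{align*}
\overline{F}_t(s) \;=\; \Bigl(\sum_{i\in\mathbb{S}}\pi_i(t)\mathbf{e}_i^{\top}\Bigr)\Bigl(\mathbf{S}(t)e^{\mathbf{B}(s-t)}+[\mathbf{I}-\mathbf{S}(t)]e^{\mathbf{A}(s-t)}\Bigr)\mathbb{1}
\;=\; \boldsymbol{\pi}^{\top}(t)\Bigl(\mathbf{S}(t)e^{\mathbf{B}(s-t)}+[\mathbf{I}-\mathbf{S}(t)]e^{\mathbf{A}(s-t)}\Bigr)\mathbb{1},
\end{align*}
which is exactly the claim (\ref{eq:PH1}).

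There is no genuine obstacle once the two prior results are in hand; the only point that requires a brief justification is the validity of the tower step $\mathbb{P}\{A\mid\mathcal{G}_t\}=\sum_{i\in E}\mathbb{P}\{A\mid\mathcal{F}_{t,i}\}\mathbb{P}\{X_t=i\mid\mathcal{G}_t\}$, which follows from the disjoint decomposition of $\mathcal{G}_t$ into the events $\mathcal{F}_{t,i}$, $i\in E$, since on $\mathcal{G}_t$ the variable $X_t$ takes values only in $E$. The matrix-vector repackaging in the last step is purely algebraic and uses the convention, stated just before Proposition \ref{prop:propinitialprobab}, that $\pi_\Delta(t)=0$, so the phase-generator representation $(\mathbf{A},\mathbf{B})$ on $E$ suffices.
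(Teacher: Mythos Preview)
Your proof is correct and in fact slightly cleaner than the paper's. The paper also partitions $\mathcal{G}_t$ over the current state $X_t=k\in E$, but instead of invoking the already-established identity (\ref{eq:PH0}) for $\overline{F}_{k,t}(s)$, it decomposes further over the regime $\phi\in\{0,1\}$ and the future state $X_s=j\in E$, computes each summand as $\pi_k(t)s_k(t)\mathbf{e}_k^\top e^{\mathbf{G}(s-t)}\mathbf{e}_j$ (and the $\phi=0$ analogue), and then reassembles using the block partition (\ref{eq:blockPts}). Your approach treats (\ref{eq:PH0}) as a black box and only needs the tower step $\overline{F}_t(s)=\sum_{i\in E}\pi_i(t)\overline{F}_{i,t}(s)$, which is exactly the relation the paper records \emph{after} the lemma as (\ref{eq:relation}) and then uses for all subsequent $\mathcal{G}_t$-conditional results. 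So both arguments are the same in spirit; yours is more modular, while the paper's is more self-contained at the cost of re-deriving part of (\ref{eq:PH0}) inline.
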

\begin{proof}
As $\tau$ is the time until absorption of $X$, by the law of total probability,
\begin{align*}
\mathbb{P}\{\tau>s \big\vert \mathcal{G}_t\}=&\sum_{j,k\in E}\Big(\mathbb{P}\big\{X_s=j, X_t= k, \phi=1 \big\vert X_t\neq \Delta, \mathcal{F}_{t-}\big\}\\
&\hspace{1cm}+\mathbb{P}\big\{X_s=j, X_t= k, \phi = 0\big\vert X_t\neq \Delta, \mathcal{F}_{t-}\big\}\Big).
\end{align*}
On account that $\{X_t=k\}\subset \{X_t\neq \Delta\}$, for $k\in E$, by the Bayes' formula,
\begin{align*}
&\mathbb{P}\big\{X_s=j, X_t= k, \phi=1 \big\vert X_t\neq \Delta, \mathcal{F}_{t-}\big\}\\
&\hspace{2cm}=\mathbb{P}\big\{X_t=k\big\vert X_t\neq \Delta, \mathcal{F}_{t-}\big\}\mathbb{P}\big\{\phi=1\big\vert X_t=k,\mathcal{F}_{t-}\big\}\\
&\hspace{3cm}\mathbb{P}\big\{X_s=j \big\vert \phi=1, X_t=k,\mathcal{F}_{t-}\big\}\\
&\hspace{2cm}=\pi_k(t) s_k(t) \mathbf{e}_k^{\top} e^{\mathbf{G}(s-t)} \mathbf{e}_j^{\top}.
\end{align*}
Applying similar arguments for the above derivation, one can obtain
\begin{align*}
\mathbb{P}\big\{X_s=j, X_t= k, \phi=0 \big\vert X_t\neq \Delta, \mathcal{F}_{t-}\big\}=&\pi_k(t) \big(1-s_k(t)\big) \mathbf{e}_k^{\top} e^{\mathbf{Q}(s-t)} \mathbf{e}_j^{\top}.
\end{align*}
The claim in (\ref{eq:PH1}) is established by applying the transition matrix (\ref{eq:blockPts}). \exit
\end{proof}
\begin{Rem}
Following the two identities (\ref{eq:PH1}) and (\ref{eq:PH0}), we can conclude that
\begin{align}\label{eq:relation}
\mathbb{P}\{\tau > s \vert \mathcal{G}_{t}\}=\sum_{i\in E} \pi_i(t)\mathbb{P}\{\tau > s \vert \mathcal{F}_{t,i}\}.
\end{align}
\end{Rem}

\medskip

Following the same approach discussed in \cite{Neuts1975}, \cite{Neuts1981} and \cite{Surya2018}, the density function $f_t(s)$ of $\tau$, its Laplace transform and $n$th moment are presented below.

\begin{theo}
The $\mathcal{G}_{t}-$conditional density $f_t(s)$ of $\tau$ is given for $s\geq t\geq 0$ by
\begin{equation}\label{eq:PHD1}
f_t(s) = -\boldsymbol{\pi}^{\top}(t)\Big(\mathbf{S}(t) e^{\mathbf{B}(s-t)}\mathbf{B}+ \big[\mathbf{I}-\mathbf{S}(t)\big] e^{\mathbf{A}(s-t)} \mathbf{A}\Big)\mathbb{1}.
\end{equation}
\begin{enumerate}
\item[(i)] The Laplace transform $\widehat{F}_t[\lambda]=\int_0^{\infty} e^{-\lambda u} f_t(t+u) du$ is given by
\begin{equation*}
\widehat{F}_t[\lambda]=-\boldsymbol{\pi}^{\top}(t)\Big(\mathbf{S}(t)\big(\lambda\mathbf{I}-\mathbf{B}\big)^{-1}\mathbf{B}
+\big[\mathbf{I}-\mathbf{S}(t)\big]\big(\lambda\mathbf{I}-\mathbf{A}\big)^{-1}\mathbf{A}\Big)\mathbb{1}.
\end{equation*}
\item[(ii)] The $\mathcal{G}_{t}-$conditional $n$th moment, \textrm{for $n=0,1,...$}, of $\tau$ is given by
\begin{equation*}
\mathbb{E}\{\tau^n \vert \mathcal{G}_{t}\}=(-1)^n n!\boldsymbol{\pi}^{\top}(t)\Big(\mathbf{S}(t) \mathbf{B}^{-n}+\big[\mathbf{I}-\mathbf{S}(t)\big]\mathbf{A}^{-n}\Big)\mathbb{1}.
\end{equation*}
\end{enumerate}
\end{theo}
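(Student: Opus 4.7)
The plan is to obtain all three formulas from the survival function (\ref{eq:PH1}) by, respectively, differentiating once in $s$, taking a Laplace transform of the resulting density, and then differentiating the Laplace transform at zero (equivalently, integrating the density against $u^n$). Throughout the calculation I shall use that both phase generators $\mathbf{A}$ and $\mathbf{B}$ are nonsingular with spectra in the open left half-plane, so that $e^{\mathbf{A} u}$ and $e^{\mathbf{B} u}$ decay exponentially and all relevant improper integrals converge absolutely.

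First I would write $f_t(s)=-\tfrac{d}{ds}\overline{F}_t(s)$, differentiate (\ref{eq:PH1}) term by term with respect to $s$, and use the elementary identity $\tfrac{d}{ds} e^{\mathbf{B}(s-t)} = e^{\mathbf{B}(s-t)}\mathbf{B}$ (and the analogous identity for $\mathbf{A}$). Since $\boldsymbol{\pi}(t)$, $\mathbf{S}(t)$ and the factor $\mathbb{1}$ do not depend on $s$, the differentiation passes through these factors and immediately delivers (\ref{eq:PHD1}).

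For part (i), I would substitute $f_t(t+u)$ into the defining integral and interchange the $\boldsymbol{\pi}(t)$, $\mathbf{S}(t)$, $\mathbb{1}$ factors with the integral sign (they are constants in $u$). What remains is the two scalar-matrix identities
\begin{equation*}
\int_0^\infty e^{-\lambda u}\, e^{\mathbf{B} u}\,du=(\lambda\mathbf{I}-\mathbf{B})^{-1},\qquad \int_0^\infty e^{-\lambda u}\, e^{\mathbf{A} u}\,du=(\lambda\mathbf{I}-\mathbf{A})^{-1},
\end{equation*}
valid for every $\lambda\geq 0$ because the eigenvalues of $\mathbf{A}$ and $\mathbf{B}$ all have negative real parts. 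Multiplying on the right by $\mathbf{B}$ and $\mathbf{A}$ respectively, and collecting, yields the stated expression for $\widehat{F}_t[\lambda]$.

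For part (ii) I would note that $\widehat{F}_t[\lambda]=\mathbb{E}\{e^{-\lambda(\tau-t)}\vert\mathcal{G}_t\}$, so the $n$th moment equals $(-1)^n$ times the $n$th derivative of $\widehat{F}_t[\lambda]$ evaluated at $\lambda=0$. Using $\tfrac{d^n}{d\lambda^n}(\lambda\mathbf{I}-\mathbf{B})^{-1}=(-1)^n n!(\lambda\mathbf{I}-\mathbf{B})^{-(n+1)}$, evaluating at $\lambda=0$ and simplifying $(-\mathbf{B})^{-(n+1)}\mathbf{B}=(-1)^{n+1}\mathbf{B}^{-n}$ (and the analogous identity for $\mathbf{A}$), the signs collapse to give the announced $(-1)^n n!\,\boldsymbol{\pi}^\top(t)(\mathbf{S}(t)\mathbf{B}^{-n}+[\mathbf{I}-\mathbf{S}(t)]\mathbf{A}^{-n})\mathbb{1}$. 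A sanity check at $n=0$ returns $\boldsymbol{\pi}^\top(t)\mathbb{1}=1$, confirming the normalisation. The only delicate point, and the step I would be most careful about, is justifying the termwise differentiation/integration above; this is routine but requires invoking the spectral bound on $\mathbf{A}$ and $\mathbf{B}$ to dominate the integrands uniformly and to ensure the resolvents $(\lambda\mathbf{I}-\mathbf{A})^{-1}$, $(\lambda\mathbf{I}-\mathbf{B})^{-1}$ exist and are analytic in a neighbourhood of $\lambda=0$.
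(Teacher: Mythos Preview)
Your proposal is correct and is exactly the standard route the paper has in mind: the paper does not spell out a proof here but simply writes ``Following the same approach discussed in \cite{Neuts1975}, \cite{Neuts1981} and \cite{Surya2018}\ldots'', and your differentiation of (\ref{eq:PH1}), resolvent integral $\int_0^\infty e^{-\lambda u}e^{\mathbf{B}u}\,du=(\lambda\mathbf{I}-\mathbf{B})^{-1}$, and moment extraction via derivatives at $\lambda=0$ are precisely that approach.

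One small remark on part~(ii): as you yourself note, $\widehat{F}_t[\lambda]=\mathbb{E}\{e^{-\lambda(\tau-t)}\mid\mathcal{G}_t\}$, so what your computation actually delivers is $\mathbb{E}\{(\tau-t)^n\mid\mathcal{G}_t\}$ rather than $\mathbb{E}\{\tau^n\mid\mathcal{G}_t\}$; the right-hand side of the stated formula is indeed the $n$th moment of the \emph{residual} lifetime, consistent with the $t=0$ and $\mathbf{B}=\mathbf{A}$ specialisation to Neuts' classical identity. This is a wording issue in the statement, not a flaw in your argument.
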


\pagebreak 

\noindent Setting $\mathbf{B}=\mathbf{A}$, in which case the mixture process is driven by $X^{(0)}$, the above results coincide with that of given in \cite{Neuts1975} and Proposition 4.1 in \cite{Asmussen2003} for $t=0$. 

\medskip

The following results summarize the dense and closure properties under finite convex mixtures and finite convolutions of $F_t(s)$ (\ref{eq:PH1}). They can be established using matrix analytic approach \cite{Neuts1981}. See for e.g. Theorems 4.12 and 4.13 in \cite{Surya2018}.  

\begin{theo}
The phase-type distribution $F_t(s)$ (\ref{eq:PH1}) is closed under finite convex mixtures and convolutions, and forms a dense class of distributions on $\mathbb{R}_+$.
\end{theo}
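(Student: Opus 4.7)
The plan is to exploit a structural decomposition of the survival function (\ref{eq:PH1}) as a convex combination of two classical single-generator phase-type survival functions, thereby reducing all three claims to the corresponding classical results of Neuts \cite{Neuts1975,Neuts1981}. Setting $w_1(t)=\boldsymbol{\pi}^{\top}(t)\mathbf{S}(t)\mathbb{1}$ and $w_2(t)=\boldsymbol{\pi}^{\top}(t)[\mathbf{I}-\mathbf{S}(t)]\mathbb{1}$, one verifies $w_1(t)+w_2(t)=\boldsymbol{\pi}^{\top}(t)\mathbb{1}=1$ and
$$\overline{F}_t(s)=w_1(t)\overline{G}_B(s-t)+w_2(t)\overline{G}_A(s-t),$$
where $\overline{G}_B$ and $\overline{G}_A$ are classical phase-type survival functions with representations $(\boldsymbol{\pi}^{\top}(t)\mathbf{S}(t)/w_1(t),\mathbf{B})$ and $(\boldsymbol{\pi}^{\top}(t)[\mathbf{I}-\mathbf{S}(t)]/w_2(t),\mathbf{A})$ respectively (with the obvious convention if $w_i(t)=0$). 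Conversely, any classical phase-type survival function with representation $(\boldsymbol{\alpha},\mathbf{T})$ is of the form (\ref{eq:PH1}) on taking $\mathbf{A}=\mathbf{B}=\mathbf{T}$ and $\boldsymbol{\pi}(t)=\boldsymbol{\alpha}$, since $\mathbf{S}(t)+[\mathbf{I}-\mathbf{S}(t)]=\mathbf{I}$ collapses the two exponentials into a single one.

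First, I would establish closure under finite convex mixtures by a direct block-diagonal construction. Given representations $(\boldsymbol{\pi}_{\ell}(t),\mathbf{S}_{\ell}(t),\mathbf{A}_{\ell},\mathbf{B}_{\ell})$ for $\ell=1,\ldots,k$ with convex weights $p_1,\ldots,p_k$, set $\tilde{\mathbf{A}}=\mathrm{diag}(\mathbf{A}_1,\ldots,\mathbf{A}_k)$, $\tilde{\mathbf{B}}=\mathrm{diag}(\mathbf{B}_1,\ldots,\mathbf{B}_k)$, $\tilde{\mathbf{S}}(t)=\mathrm{diag}(\mathbf{S}_1(t),\ldots,\mathbf{S}_k(t))$, and concatenate the row vectors as $\tilde{\boldsymbol{\pi}}(t)^{\top}=(p_1\boldsymbol{\pi}_1(t)^{\top},\ldots,p_k\boldsymbol{\pi}_k(t)^{\top})$. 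Block-diagonality of $e^{\tilde{\mathbf{A}}u}$ and $e^{\tilde{\mathbf{B}}u}$ yields, by direct computation, the convex combination $\sum_{\ell=1}^{k}p_{\ell}\overline{F}_t^{(\ell)}$ as a distribution of form (\ref{eq:PH1}) on the direct-sum state space.

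Next, for convolution closure I would apply the two-term decomposition above to rewrite the convolution of two distributions of form (\ref{eq:PH1}) as a four-term convex mixture of convolutions of classical phase-type distributions. Each such convolution is again classical phase-type by Neuts' standard block upper-triangular construction, in which the convolved chain has initial vector $(\boldsymbol{\alpha},0)$ and generator carrying the two source generators on the diagonal with an off-diagonal block $(-\mathbf{T}\mathbb{1})\boldsymbol{\beta}^{\top}$ linking absorption of the first chain to entry into the second. The resulting four-term mixture is then of form (\ref{eq:PH1}) by the preceding mixture-closure step combined with the inclusion of classical phase-types into form (\ref{eq:PH1}). Finite convolutions follow by induction on the number of factors.

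Finally, the density claim is immediate: the form (\ref{eq:PH1}) contains the classical phase-type class (take $\mathbf{A}=\mathbf{B}$), and the latter is dense in the set of distributions on $\mathbb{R}_+$ by Neuts' classical theorem \cite{Neuts1975}; hence the form (\ref{eq:PH1}) is dense too. The main obstacle I anticipate is the bookkeeping in the convolution step, where expressing the convolution directly in the two-generator form (\ref{eq:PH1}) is noticeably less clean than in the single-generator case; routing through the two-term decomposition into classical phase-types is precisely what keeps the argument tractable.
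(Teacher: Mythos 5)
Your proposal is correct, and it is more self-contained than what the paper actually does: the paper offers no proof of this theorem at all, merely remarking that the claims ``can be established using matrix analytic approach'' and deferring to Theorems 4.12 and 4.13 of Surya \cite{Surya2018}, which carry out the standard block constructions (block-diagonal generators for mixtures, block upper-triangular concatenation for convolutions) directly in the two-generator mixture form. Your route differs in one genuinely useful way: the opening decomposition $\overline{F}_t(s)=w_1(t)\overline{G}_B(s-t)+w_2(t)\overline{G}_A(s-t)$ with $w_1(t)=\boldsymbol{\pi}^{\top}(t)\mathbf{S}(t)\mathbb{1}$, together with the reverse embedding $\mathbf{A}=\mathbf{B}=\mathbf{T}$, shows that the class of survival functions of form (\ref{eq:PH1}) coincides, after the shift $u=s-t$, with the class of two-component convex mixtures of classical phase-type distributions --- and since classical phase-type is itself closed under finite mixtures, with the classical phase-type class outright. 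Had you stated this identification explicitly, all three claims (mixture closure, convolution closure, denseness) would follow in one line from Neuts \cite{Neuts1975,Neuts1981}, and your block-diagonal construction and four-term convolution mixture, while correct, would become optional bookkeeping confirming that the results can be re-expressed in the two-generator form. One caveat worth a sentence in a final write-up: your constructed parameters $(\tilde{\boldsymbol{\pi}}(t),\tilde{\mathbf{S}}(t),\tilde{\mathbf{A}},\tilde{\mathbf{B}})$ are formal, and need not arise as the Bayesian updates of an actual mixture process observed up to time $t>0$; this is harmless here because the theorem concerns the functional class (\ref{eq:PH1}) --- and any admissible pair $(\boldsymbol{\pi},\mathbf{S})$ is realizable at $t=0$ as the free initial data of a mixture process --- which is also the reading implicit in the paper's citation of \cite{Surya2018}.
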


\subsection{Bivariate conditional phase-type distributions}

As in the univariate case, we consider the mixture process $X$ (\ref{eq:mixture}) on the finite state space $\mathbb{S}=E\cup\{\Delta\}$. Following \cite{Assaf1984}, let $\boldsymbol{\Gamma}_1$ and $\boldsymbol{\Gamma}_2$ be two nonempty stochastically closed subsets of $\mathbb{S}$ such that $\boldsymbol{\Gamma}_1\cap \boldsymbol{\Gamma}_2$ is proper subset of $\mathbb{S}$. We assume without loss of generality that $\boldsymbol{\Gamma}_1\cap \boldsymbol{\Gamma}_2=\Delta$ and the absorption into $\Delta$ is certain, i.e., the generator matrices $\mathbf{A}$ and $\mathbf{B}$ need to be nonsingular. As $\boldsymbol{\Gamma}_k$, $k=1,2$, is stochastically closed set, necessarily $[\mathbf{Q}]_{i,j}=0=[\mathbf{G}]_{i,j}$ if $i\in\boldsymbol{\Gamma}_k$ and $j\in \boldsymbol{\Gamma}_k^c$.

We denote by $\boldsymbol{\pi}$ the initial probability vector on $\mathbb{S}$ such that $\pi_{\Delta}=0$. We shall assume that $\boldsymbol{\pi}_i\neq 0$ if $i\in \boldsymbol{\Gamma}_1^c\cup \boldsymbol{\Gamma}_2^c$ implying $\mathbb{P}\{\tau_1>0, \tau_2>0\}=1$. As before, $\mathcal{F}_{t,i}=\mathcal{F}_{t-}\cup\{X_t=i\}$ defines all previous and current information of $X$. 

\subsubsection{The conditional joint survival function of $\tau_1$ and $\tau_2$}

The joint distribution of $\tau_k$ (\ref{eq:MPHnew}), for $k=1,2$, are given by the following.
\begin{lem}\label{lem:lemjointCDF}
The identity for $\mathcal{F}_{t,i}-$conditional joint distribution $\overline{F}_{i,t}(t_1,t_2)=\mathbb{P}\{\tau_1>t_1, \tau_2>t_2 \vert \mathcal{F}_{t,i}\}$ of $\tau_1$ and $\tau_2$ is given for $t_1,t_2\geq t\geq 0$ and $i\in E$ by
\begin{align*}
\overline{F}_{i,t}(t_1,t_2)=
\begin{cases}
\overline{F}_{i,t}^{(1)}(t_1,t_2):=\mathbf{e}_i^{\top}\Big(\mathbf{S}(t)e^{\mathbf{B}(t_{2}-t)}\mathbf{H}_2e^{\mathbf{B}(t_1-t_2)}\mathbf{H}_1 \\
&\hspace{-7cm}+\big[\mathbf{I}-\mathbf{S}(t)\big]e^{\mathbf{A}(t_{2}-t)}\mathbf{H}_2e^{\mathbf{A}(t_1-t_2)}\mathbf{H}_1\Big)\mathbb{1}, \; \textrm{if $t_1\geq t_2\geq t \geq 0$} \\[8pt]
\overline{F}_{i,t}^{(2)}(t_1,t_2):=\mathbf{e}_i^{\top}\Big(\mathbf{S}(t)e^{\mathbf{B}(t_{1}-t)}\mathbf{H}_1e^{\mathbf{B}(t_2-t_1)}\mathbf{H}_2 \\
&\hspace{-7cm}+\big[\mathbf{I}-\mathbf{S}(t)\big]e^{\mathbf{A}(t_1-t)}\mathbf{H}_1e^{\mathbf{A}(t_2-t_1)}\mathbf{H}_2\Big)\mathbb{1}, \; \textrm{if $t_2\geq t_1\geq t \geq 0$}.
\end{cases}
\end{align*}
Note that we have used the notation $\mathbf{H}_k$ to denote a $(m\times m)-$diagonal matrix whose $i$th diagonal element for $i=1,2,...,m$ equals $1$ if $i\in\Gamma_k^c$ and is $0$ otherwise.
\end{lem}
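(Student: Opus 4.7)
The central observation is that since $\boldsymbol{\Gamma}_k$ is stochastically closed, once $X$ enters $\boldsymbol{\Gamma}_k$ it never leaves, and consequently
\begin{equation*}
\{\tau_k > t_k\} \;=\; \{X_s \notin \boldsymbol{\Gamma}_k \text{ for all } s\leq t_k\} \;=\; \{X_{t_k} \in \boldsymbol{\Gamma}_k^c\}.
\end{equation*}
Hence the joint event in question reduces to the two marginal position constraints $\{X_{t_1}\in\boldsymbol{\Gamma}_1^c\}\cap \{X_{t_2}\in\boldsymbol{\Gamma}_2^c\}$, which converts the survival probability into a purely transition-probability quantity handled by $\mathbf{H}_1,\mathbf{H}_2$.

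The next step is to condition on the unobserved regime $\phi$. By the law of total probability and the Bayesian update (\ref{eq:bayesianupdates}), I would write
\begin{equation*}
\overline{F}_{i,t}(t_1,t_2)\;=\;s_i(t)\,\mathbb{P}\bigl\{X_{t_1}\in\boldsymbol{\Gamma}_1^c,X_{t_2}\in\boldsymbol{\Gamma}_2^c \,\big|\, \phi=1,\mathcal{F}_{t,i}\bigr\}
+\bigl(1-s_i(t)\bigr)\,\mathbb{P}\bigl\{\cdots\,\big|\,\phi=0,\mathcal{F}_{t,i}\bigr\}.
\end{equation*}
The key point is that once we condition on the speed regime $\phi$, the process coincides with the Markov chain $X^{(\phi)}$, so the entire past $\mathcal{F}_{t-}$ becomes irrelevant and only $X_t=i$ matters. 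This reduces each conditional probability to a standard multivariate phase-type computation under a \emph{single} intensity matrix.

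For Case 1 ($t_1\geq t_2\geq t$), I would apply the Chapman--Kolmogorov identity over the split $[t,t_2]\cup[t_2,t_1]$: summing over admissible states $k\in\boldsymbol{\Gamma}_2^c$ at time $t_2$ and $\ell\in\boldsymbol{\Gamma}_1^c$ at time $t_1$ produces
\begin{equation*}
\mathbb{P}\bigl\{\cdots\big|\phi=1,X_t=i\bigr\}\;=\;\mathbf{e}_i^{\top}e^{\mathbf{G}(t_2-t)}\mathbf{H}_2\,e^{\mathbf{G}(t_1-t_2)}\mathbf{H}_1\mathbb{1},
\end{equation*}
and analogously with $\mathbf{Q}$ under $\phi=0$. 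Because $\boldsymbol{\Delta}\in\boldsymbol{\Gamma}_k$ forces the $(m{+}1)$-th diagonal entry of $\mathbf{H}_k$ to vanish, the block form (\ref{eq:blockpartisi}) lets me replace $e^{\mathbf{G}(\cdot)}$ and $e^{\mathbf{Q}(\cdot)}$ by their transient-state blocks $e^{\mathbf{B}(\cdot)}$ and $e^{\mathbf{A}(\cdot)}$ restricted to $E$ without changing the value of the expression. Combining the two $\phi$-contributions and using the diagonality $\mathbf{e}_i^{\top}\mathbf{S}(t)=s_i(t)\mathbf{e}_i^{\top}$ yields exactly $\overline{F}_{i,t}^{(1)}(t_1,t_2)$. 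Case 2 is obtained by the same argument with the roles of $(t_1,\mathbf{H}_1)$ and $(t_2,\mathbf{H}_2)$ interchanged.

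The step I expect to require the most care is the conditional independence at the second bullet: verifying that $\mathbb{P}\{\tau_1>t_1,\tau_2>t_2\mid \phi,\mathcal{F}_{t,i}\}=\mathbb{P}\{\tau_1>t_1,\tau_2>t_2\mid \phi,X_t=i\}$ for $i\in E$, so that the non-Markov past of the mixture $X$ is genuinely absorbed into $s_i(t)$ and nothing else leaks into the Markov-chain computation. Once this conditional independence is justified, the remainder is a mechanical application of Chapman--Kolmogorov and the block reduction of Lemma~4.2. \exit
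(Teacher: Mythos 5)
Your proposal is correct and follows essentially the same route as the paper's proof: converting $\{\tau_k>t_k\}$ into $\{X_{t_k}\in\boldsymbol{\Gamma}_k^c\}$ via stochastic closedness, splitting on $\phi$ with the Bayesian update $s_i(t)$, and applying the conditional Markov property of $X^{(\phi)}$ over the ordered time intervals before reducing to the transient blocks $\mathbf{B},\mathbf{A}$ via the block partition (\ref{eq:blockpartisi}). The paper merely writes your Chapman--Kolmogorov step explicitly as sums over intermediate states, assembling $\mathbf{H}_{i_k}=\sum_{J_{i_k}\in\Gamma_{i_k}^c}\mathbf{e}_{J_{i_k}}\mathbf{e}_{J_{i_k}}^{\top}$, and the conditional-independence point you rightly flag as delicate is used there in exactly the same (implicit) way.
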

\begin{proof}
To begin with let $(t_{i_1},t_{i_2})$, with $t_{i_2}\geq t_{i_1}$ be the ordering of $(t_1,t_2)$, with $t_{i_1}\geq t_{i_0}=t$. Since $\tau_{i_k}$, $k=1,2$, is the time until absorption of $X$ (\ref{eq:mixture}) into $\Gamma_{i_k}$, 
\begin{align}
\mathbb{P}\{\tau_1>t_1,\tau_2>t_2 \big\vert \mathcal{F}_{t_{i_0},i}\}=&\mathbb{P}\{\tau_{i_1}>t_{i_1}, \tau_{i_2}>t_{i_2} \big\vert \mathcal{F}_{t_{i_0},i}\} \nonumber\\
=& \mathbb{P}\{X_{t_{i_1}}\in\boldsymbol{\Gamma}_{i_1}^c, X_{t_{i_2}}\in\boldsymbol{\Gamma}_{i_2}^c \big\vert \mathcal{F}_{t_{i_0},i} \nonumber\}\\
=&\sum_{J_{i_1}\in\Gamma_{i_1}^c}\sum_{J_{i_2}\in\Gamma_{i_2}^c} \mathbb{P}\{X_{t_{i_1}}=J_{i_1}, X_{t_{i_2}}=J_{i_2} \big\vert  \mathcal{F}_{t_{i_0},i}\}. \label{eq:turunan1}
\end{align}
The probability on the r.h.s of the last equality can be worked out as follows.
\begin{align*}
&\mathbb{P}\big\{X_{t_{i_1}}=J_{i_1}, X_{t_{i_2}}=J_{i_2} \big\vert  \mathcal{F}_{t_{i_0},i}\big\}\\
&\hspace{2cm}= \mathbb{P}\big\{X_{t_{i_0}}=J_{i_0}\vert \mathcal{F}_{t_{i_0},i}\big\}\mathbb{P}\big\{\phi=1 \big\vert X_{t_{i_0}}=J_{i_0},\mathcal{F}_{t_{i_0},i}\big\} \\
&\hspace{3.5cm}\times \mathbb{P}\big\{X_{t_{i_1}}=J_{i_1} \big\vert \phi=1, X_{t_{i_0}}=J_{i_0},\mathcal{F}_{t_{i_0},i}\big\}\\
&\hspace{4.5cm}\times \mathbb{P}\big\{X_{t_{i_2}}=J_{i_2} \big\vert \phi=1, X_{t_{i_1}}=J_{i_1}, X_{t_{i_0}}=J_{i_0},\mathcal{F}_{t_{i_0},i}\big\}\\
&\hspace{2cm}+ \mathbb{P}\big\{X_{t_{i_0}}=J_{i_0}\vert \mathcal{F}_{t_{i_0},i}\big\}\mathbb{P}\big\{\phi=0 \big\vert X_{t_{i_0}}=J_{i_0},\mathcal{F}_{t_{i_0},i}\big\} \\
&\hspace{3.5cm}\times \mathbb{P}\big\{X_{t_{i_1}}=J_{i_1} \big\vert \phi=0, X_{t_{i_0}}=J_{i_0},\mathcal{F}_{t_{i_0},i}\big\}\\
&\hspace{4.5cm}\times \mathbb{P}\big\{X_{t_{i_2}}=J_{i_2} \big\vert \phi=0, X_{t_{i_1}}=J_{i_1}, X_{t_{i_0}}=J_{i_0},\mathcal{F}_{t_{i_0},i}\big\}.
\end{align*}
Note that we have applied the law of total probability and Bayes' rule for conditional probability in the above equality. Recall that $\mathbb{P}\big\{X_{t_{i_0}}=J_{i_0}\vert \mathcal{F}_{t_{i_0},i}\big\}=1$ iff $J_{i_0}=i$ and zero otherwise. In terms of the Bayesian updates (\ref{eq:bayesianupdates}) we have:
\begin{align*}
&\mathbb{P}\big\{X_{t_{i_1}}=J_{i_1}, X_{t_{i_2}}=J_{i_2} \big\vert  \mathcal{F}_{t_{i_0},i}\big\}\\
&\hspace{2cm}=\mathbf{e}_i^{\top}\mathbf{S}(t) e^{\mathbf{G}(t_{i_1}-t_{i_0})}\mathbf{e}_{J_{i_1}} \mathbf{e}_{J_{i_1}}^{\top} e^{\mathbf{G}(t_{i_2}-t_{i_1})} \mathbf{e}_{J_{i_2}}  \mathbf{e}_{J_{i_2}}^{\top}\mathbb{1}\\
&\hspace{3cm}+\mathbf{e}_i^{\top}\big[\mathbf{I}-\mathbf{S}(t)\big] e^{\mathbf{Q}(t_{i_1}-t_{i_0})}\mathbf{e}_{J_{i_1}} \mathbf{e}_{J_{i_1}}^{\top} e^{\mathbf{Q}(t_{i_2}-t_{i_1})} \mathbf{e}_{J_{i_2}}  \mathbf{e}_{J_{i_2}}^{\top}\mathbb{1}.
\end{align*}
Therefore, starting from equation (\ref{eq:turunan1}), we have following the above that
\begin{align*}
&\mathbb{P}\{\tau_{i_1}>t_{i_1}, \tau_{i_2}>t_{i_2} \big\vert \mathcal{F}_{t_{i_0},i}\}=\sum_{J_{i_1}\in\Gamma_{i_1}^c}\sum_{J_{i_2}\in\Gamma_{i_2}^c} \mathbb{P}\{X_{t_{i_1}}=J_{i_1}, X_{t_{i_2}}=J_{i_2} \big\vert  \mathcal{F}_{t_{i_0},i}\}\\
&\hspace{1cm}=\mathbf{e}_i^{\top}\mathbf{S}(t) e^{\mathbf{G}(t_{i_1}-t_{i_0})}\Big(\sum_{J_{i_1}\in\Gamma_{i_1}^c} \mathbf{e}_{J_{i_1}}\mathbf{e}_{J_{i_1}}^{\top} \Big) e^{\mathbf{G}(t_{i_2}-t_{i_1})} \Big(\sum_{J_{i_2}\in\Gamma_{i_2}^c}  \mathbf{e}_{J_{i_2}}  \mathbf{e}_{J_{i_2}}^{\top}\Big)\mathbb{1}  \\
&\hspace{1.5cm}+ \mathbf{e}_i^{\top} \big[\mathbf{I}-\mathbf{S}(t) \big] e^{\mathbf{Q}(t_{i_1}-t_{i_0})}\Big(\sum_{J_{i_1}\in\Gamma_{i_1}^c} \mathbf{e}_{J_{i_1}}\mathbf{e}_{J_{i_1}}^{\top} \Big) e^{\mathbf{Q}(t_{i_2}-t_{i_1})} \Big(\sum_{J_{i_2}\in\Gamma_{i_2}^c}  \mathbf{e}_{J_{i_2}}  \mathbf{e}_{J_{i_2}}^{\top}\Big)\mathbb{1},
\end{align*}
leading to the form $\overline{F}_{i,t}(t_1,t_2)$ on account of $\mathbf{H}_{i_k}=\sum\limits_{J_{i_k}\in\Gamma_{i_k}^c} \mathbf{e}_{J_{i_k}}\mathbf{e}_{J_{i_k}}^{\top}$ and (\ref{eq:blockpartisi}). \exit
\end{proof}
\begin{prop}
The distribution $\overline{F}_{t}(t_1,t_2)=\mathbb{P}\{\tau_1>t_1, \tau_2>t_2 \vert \mathcal{G}_{t}\}$ is given by
\begin{align*}
\overline{F}_{t}(t_1,t_2)=
\begin{cases}
\overline{F}_{t}^{(1)}(t_1,t_2):=\boldsymbol{\pi}^{\top}(t)\Big(\mathbf{S}(t)e^{\mathbf{B}(t_{2}-t)}\mathbf{H}_2e^{\mathbf{B}(t_1-t_2)}\mathbf{H}_1 \\
&\hspace{-7.25cm}+\big[\mathbf{I}-\mathbf{S}(t)\big]e^{\mathbf{A}(t_{2}-t)}\mathbf{H}_2e^{\mathbf{A}(t_1-t_2)}\mathbf{H}_1\Big)\mathbb{1}, \; \textrm{if $t_1\geq t_2\geq t\geq 0$} \\[8pt]
\overline{F}_{t}^{(2)}(t_1,t_2):=\boldsymbol{\pi}^{\top}(t)\Big(\mathbf{S}(t)e^{\mathbf{B}(t_{1}-t)}\mathbf{H}_1e^{\mathbf{B}(t_2-t_1)}\mathbf{H}_2 \\
&\hspace{-7.25cm}+\big[\mathbf{I}-\mathbf{S}(t)\big]e^{\mathbf{A}(t_1-t)}\mathbf{H}_1e^{\mathbf{A}(t_2-t_1)}\mathbf{H}_2\Big)\mathbb{1},  \; \textrm{if $t_2\geq t_1\geq t \geq 0$}.
\end{cases}
\end{align*}
\end{prop}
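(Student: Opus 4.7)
The plan is to reduce the $\mathcal{G}_t$-conditional statement to the $\mathcal{F}_{t,i}$-conditional one established in Lemma \ref{lem:lemjointCDF}, by an argument that generalizes the identity (\ref{eq:relation}) from the univariate case to the bivariate setting. In particular, I would combine that lemma with the Bayesian update of the initial distribution $\boldsymbol{\pi}(t)$ provided by Proposition \ref{prop:propinitialprobab}, so that no new sample-path computation is needed; the work has essentially already been done.

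The first step is a decomposition. Since $\mathcal{G}_t = \bigcup_{k\in E}\mathcal{F}_{t,k}$ and these events are disjoint (they differ in the value of $X_t$), the law of total probability gives
\begin{equation*}
\mathbb{P}\{\tau_1>t_1,\tau_2>t_2\vert\mathcal{G}_t\}
=\sum_{k\in E}\mathbb{P}\{X_t=k\vert\mathcal{G}_t\}\,\mathbb{P}\{\tau_1>t_1,\tau_2>t_2\vert X_t=k,\,\mathcal{F}_{t-}\}.
\end{equation*}
On the event $\{X_t=k\}$ with $k\in E$, we have $\{X_t=k\}\subset\{X_t\neq\Delta\}$, so conditioning on $\{X_t=k\}\cap\mathcal{F}_{t-}$ is the same as conditioning on $\mathcal{F}_{t,k}$. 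By Proposition \ref{prop:propinitialprobab} the weight $\mathbb{P}\{X_t=k\vert\mathcal{G}_t\}$ equals $\pi_k(t)$, and by Lemma \ref{lem:lemjointCDF} the inner probability equals $\overline{F}^{(1)}_{k,t}(t_1,t_2)$ when $t_1\geq t_2\geq t$, and $\overline{F}^{(2)}_{k,t}(t_1,t_2)$ when $t_2\geq t_1\geq t$. Hence
\begin{equation*}
\overline{F}_t(t_1,t_2)=\sum_{k\in E}\pi_k(t)\,\overline{F}_{k,t}(t_1,t_2),
\end{equation*}
which is the bivariate analogue of (\ref{eq:relation}).

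The final step is to recognize the sum as a vector-matrix product. In each of the two cases, the formula from Lemma \ref{lem:lemjointCDF} is of the form $\mathbf{e}_k^{\top}\mathbf{M}(t,t_1,t_2)\mathbb{1}$ for a common matrix $\mathbf{M}$ that does not depend on $k$. Therefore
\begin{equation*}
\sum_{k\in E}\pi_k(t)\mathbf{e}_k^{\top}\mathbf{M}(t,t_1,t_2)\mathbb{1}
=\Big(\sum_{k\in\mathbb{S}}\pi_k(t)\mathbf{e}_k^{\top}\Big)\mathbf{M}(t,t_1,t_2)\mathbb{1}
=\boldsymbol{\pi}^{\top}(t)\mathbf{M}(t,t_1,t_2)\mathbb{1},
\end{equation*}
where the extension from $E$ to $\mathbb{S}$ is harmless because $\pi_{\Delta}(t)=0$. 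Inserting the two expressions for $\mathbf{M}$ recovers $\overline{F}^{(1)}_t$ and $\overline{F}^{(2)}_t$ respectively.

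The proof is essentially bookkeeping; there is no serious analytic obstacle. The only point that deserves care is the Bayesian bookkeeping in the first step: one must check that the switching weights $s_k(t)$ implicitly hidden inside $\mathbf{S}(t)$ are exactly the ones compatible with $\pi_k(t)$, i.e.\ that the event $\{X_t=k\}\cap\mathcal{F}_{t-}$ can be treated as $\mathcal{F}_{t,k}$ both in computing the Bayesian weight of the initial state and in computing the subsequent survival probability. Because the intensities' dependence on the past enters solely through $\widetilde{\mathbf{S}}(t)$ and this matrix is the same in both Propositions, the factorization is coherent and the identity drops out.
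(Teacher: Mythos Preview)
Your proposal is correct and follows exactly the paper's approach: the paper's proof is the single line ``By (\ref{eq:relation}) and law of total probability, $F_t(t_1,t_2)=\sum_{i\in E} \pi_i(t)F_{i,t}(t_1,t_2)$,'' which is precisely the decomposition you carry out in detail. Your write-up simply unpacks that sentence, making explicit the use of Proposition~\ref{prop:propinitialprobab} for the weights $\pi_k(t)$, Lemma~\ref{lem:lemjointCDF} for the inner probabilities, and the recombination into $\boldsymbol{\pi}^{\top}(t)\mathbf{M}\mathbb{1}$.
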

\begin{proof}
By (\ref{eq:relation}) and law of total probability, $F_t(t_1,t_2)=\sum\limits_{i\in E} \pi_i(t)F_{i,t}(t_1,t_2)$. \exit
\end{proof}

\subsubsection{The conditional joint probability density function}

In general, the joint distribution $\overline{F}_{i,t}(t_1,t_2)$ (resp. $\overline{F}_{t}(t_1,t_2)$) has a singular component $\overline{F}_{i,t}^{(0)}(t_1,t_2)$ (resp. $\overline{F}_{t}^{(0)}(t_1,t_2)$) on the set $\{(t_1,t_2): t_2=t_1\}$. The singular component can be obtained by deriving the joint density of $\tau_1$ and $\tau_2$ and deduce the absolutely continuous and singular parts of the pdf, such as discussed in the theorem below. For non-matrix based bivariate function, see for instance \cite{Sarhan}.
\begin{theo}\label{theo:maintheo}
Given the joint distribution $\overline{F}_{i,t}(t_1,t_2)$ of $(\tau_1,\tau_2)$ as specified in Lemma \ref{lem:lemjointCDF}, the joint probability density $f_{i,t}(t_1,t_2)$ of $(\tau_1,\tau_2)$ is given by
\begin{eqnarray} \label{eq:jointpdfit}
f_{i,t}(t_1,t_2)=\frac{\partial^2 \overline{F}_{i,t}(t_1,t_2)}{\partial t_2\partial t_1}=
\begin{cases}
f_{i,t}^{(1)}(t_1,t_2), &\; \textrm{if $t_1\geq t_2\geq t \geq 0$} \\[8pt]
f_{i,t}^{(2)}(t_1,t_2), &\; \textrm{if $t_2\geq t_1\geq t \geq 0$}\\[8pt]
f_{i,t}^{(0)}(t_1,t_1), &\; \textrm{if $t_1= t_2\geq t \geq 0$}, \\
\end{cases}
\end{eqnarray}
where the absolutely continuous components $f_{i,t}^{(1)}(t_1,t_2)$ and $f_{i,t}^{(2)}(t_1,t_2)$ are
\begin{align*}
f_{i,t}^{(1)}(t_1,t_2)=&\mathbf{e}_i^{\top}\Big(\mathbf{S}(t)e^{\mathbf{B}(t_{2}-t)}\big[\mathbf{B},\mathbf{H}_2\big]e^{\mathbf{B}(t_1-t_2)}\mathbf{B}\mathbf{H}_1 \\
&+\big[\mathbf{I}-\mathbf{S}(t)\big]e^{\mathbf{A}(t_{2}-t)}\big[\mathbf{A},\mathbf{H}_2\big]e^{\mathbf{A}(t_1-t_2)}\mathbf{A}\mathbf{H}_1\Big)\mathbb{1},\\[8pt]
f_{i,t}^{(2)}(t_1,t_2)=&\mathbf{e}_i^{\top}\Big(\mathbf{S}(t)e^{\mathbf{B}(t_1-t)}\big[\mathbf{B},\mathbf{H}_1\big]e^{\mathbf{B}(t_2-t_1)}\mathbf{B}\mathbf{H}_2 \\
&+\big[\mathbf{I}-\mathbf{S}(t)\big]e^{\mathbf{A}(t_1-t)}\big[\mathbf{A},\mathbf{H}_1\big]e^{\mathbf{A}(t_2-t_1)}\mathbf{A}\mathbf{H}_2\Big)\mathbb{1},
\end{align*}
where the matrix operator $[A,B]=AB-BA$ defines the commutator of $A$ and $B$, whilst the singular component part $f_{i,t}^{(0)}(t_1,t_2)$ is defined by the function
\begin{align*}
f_{i,t}^{(0)}(t_1,t_1)=&\mathbf{e}_i^{\top}\Big\{\mathbf{S}(t)e^{\mathbf{B}(t_1-t)}\Big(\big[\mathbf{B},\mathbf{H}_2\big]\mathbf{H}_1 + \big[\mathbf{B},\mathbf{H}_1\big]\mathbf{H}_2 -\mathbf{B}\Big)\\
&+\big[\mathbf{I}-\mathbf{S}(t)\big]e^{\mathbf{A}(t_1-t)}\Big(\big[\mathbf{A},\mathbf{H}_2\big]\mathbf{H}_1 + \big[\mathbf{A},\mathbf{H}_1\big]\mathbf{H}_2 -\mathbf{A}\Big)\Big\}\mathbb{1}.
\end{align*}
\end{theo}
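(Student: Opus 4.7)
The plan is to obtain the joint density by differentiating the piecewise survival function of Lemma \ref{lem:lemjointCDF} twice, once with respect to each time variable, and then resolve the behaviour on the diagonal $\{t_1=t_2\}$ separately. In the two open regions the calculation is a direct application of the product rule together with the commutation relation between $e^{\mathbf{B}\tau}$ and $\mathbf{B}$; on the diagonal, however, a singular contribution appears and must be extracted from a one-sided-derivative jump.

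I would start on $\{t_1>t_2>t\}$ with $\overline{F}_{i,t}^{(1)}$. Only $e^{\mathbf{B}(t_1-t_2)}$ and $e^{\mathbf{A}(t_1-t_2)}$ depend on $t_1$, so $\partial/\partial t_1$ simply multiplies them on the right by $\mathbf{B}$ and $\mathbf{A}$, producing the right-most factors $\mathbf{B}\mathbf{H}_1$ and $\mathbf{A}\mathbf{H}_1$. The derivative $\partial/\partial t_2$ then hits two exponentials in each summand; by the product rule,
\[
\partial_{t_2}\bigl[e^{\mathbf{B}(t_2-t)}\mathbf{H}_2\,e^{\mathbf{B}(t_1-t_2)}\bigr]=e^{\mathbf{B}(t_2-t)}\bigl(\mathbf{B}\mathbf{H}_2-\mathbf{H}_2\mathbf{B}\bigr)e^{\mathbf{B}(t_1-t_2)}=e^{\mathbf{B}(t_2-t)}[\mathbf{B},\mathbf{H}_2]\,e^{\mathbf{B}(t_1-t_2)},
\]
which recovers exactly the commutator appearing in $f_{i,t}^{(1)}$. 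The analogous identity for $\mathbf{A}$ completes this region, and the region $\{t_2>t_1>t\}$ is handled by the symmetric computation with $(\mathbf{H}_1,t_1)$ and $(\mathbf{H}_2,t_2)$ exchanged, producing $f_{i,t}^{(2)}$.

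For the diagonal, I would first note that since $\mathbf{H}_1$ and $\mathbf{H}_2$ are diagonal they commute, so $\overline{F}_{i,t}^{(1)}(s,s)=\overline{F}_{i,t}^{(2)}(s,s)$ and the survival function itself is continuous across $\{t_1=t_2\}$; this kills the would-be delta from differentiating the first Heaviside. The first partial $\partial_{t_1}\overline{F}_{i,t}$ is, however, generically discontinuous across the diagonal, so in the distributional sense $\partial^2\overline{F}_{i,t}/\partial t_2\partial t_1$ acquires a Dirac component whose coefficient is the jump $\partial_{t_1}\overline{F}_{i,t}^{(2)}(s,s)-\partial_{t_1}\overline{F}_{i,t}^{(1)}(s,s)$. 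Computing both one-sided partials by the same commutator trick used in the previous step and then combining them, one must recast the result into the symmetric form $[\mathbf{B},\mathbf{H}_2]\mathbf{H}_1+[\mathbf{B},\mathbf{H}_1]\mathbf{H}_2-\mathbf{B}$ (and the corresponding expression in $\mathbf{A}$); the key structural input here is the projection identity $\mathbf{H}_1+\mathbf{H}_2-\mathbf{H}_1\mathbf{H}_2=\mathbf{I}$, which reflects $\Gamma_1^c\cup\Gamma_2^c=E$ and is what permits the rearrangement of $\mathbf{H}_2\mathbf{B}\mathbf{H}_1$-type terms into commutator form.

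The main obstacle will be the diagonal case rather than the two smooth regions. First, one must be careful to identify $f_{i,t}^{(0)}$ as the density per unit length of the singular measure on $\{t_1=t_2\}$, rather than as a pointwise limit of $f_{i,t}^{(1)}$ or $f_{i,t}^{(2)}$ from either side, since those two limits generally disagree both with each other and with the singular density. Second, writing the coefficient in the stated symmetric form requires a non-trivial rearrangement using $\mathbf{H}_1\mathbf{H}_2=\mathbf{H}_2\mathbf{H}_1$ together with the projection identity above, and it is worth cross-checking the final expression against the probabilistic meaning of $\{\tau_1=\tau_2\}$ (a direct transition from a state in $\Gamma_1^c\cap\Gamma_2^c$ into $\Delta$, occurring at regime-dependent rate encoded by $\mathbf{B}$ or $\mathbf{A}$ and filtered by $\mathbf{H}_1\mathbf{H}_2$) to make sure signs and commutators have been arranged correctly.
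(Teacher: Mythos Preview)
Your treatment of the absolutely continuous pieces matches the paper's exactly: direct differentiation collapsing to commutators via the product rule. For the diagonal, however, the paper takes a different route. It does not compute a distributional jump of $\partial_{t_1}\overline{F}_{i,t}$; instead it imposes the normalization
\[
1=\int_t^\infty\!\!\int_t^{t_1}\! f_{i,t}^{(1)}\,dt_2\,dt_1+\int_t^\infty\!\!\int_t^{t_2}\! f_{i,t}^{(2)}\,dt_1\,dt_2+\int_t^\infty f_{i,t}^{(0)}(t_1,t_1)\,dt_1,
\]
evaluates each double integral via Fubini and $\int_0^\infty e^{\mathbf{B}u}\,du=-\mathbf{B}^{-1}$, rewrites the results as single $t_1$-integrals, and then reads off $f_{i,t}^{(0)}$ as whatever integrand remains.

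The gap in your plan is the final rearrangement step. The one-sided jump you compute is, for the $\mathbf{B}$ summand, $\bigl([\mathbf{B},\mathbf{H}_1]\mathbf{H}_2-\mathbf{H}_2\mathbf{B}\mathbf{H}_1\bigr)\mathbb{1}$, while the target symmetric form is $\bigl([\mathbf{B},\mathbf{H}_2]\mathbf{H}_1+[\mathbf{B},\mathbf{H}_1]\mathbf{H}_2-\mathbf{B}\bigr)\mathbb{1}$; their difference is $\mathbf{B}(\mathbf{H}_1\mathbf{H}_2-\mathbf{I})\mathbb{1}$. The projection identity $\mathbf{H}_1+\mathbf{H}_2-\mathbf{H}_1\mathbf{H}_2=\mathbf{I}$ does \emph{not} kill this: $(\mathbf{H}_1\mathbf{H}_2-\mathbf{I})\mathbb{1}$ is minus the indicator of $(\Gamma_1\cup\Gamma_2)\cap E$, and rows of $\mathbf{B}$ indexed by $\Gamma_1^c\cap\Gamma_2^c$ generically have nonzero mass there. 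Concretely, with the Marshall--Olkin data of Section~5 and $i=1$, your jump yields $b_3e^{-(b_1+b_2+b_3)(t_1-t)}$, which matches the paper's own Example~5.2 and the block formula~(5.1), whereas the symmetric expression in the theorem yields the three-term combination $-(2b_1+2b_2+b_3)e^{-(b_1+b_2+b_3)(t_1-t)}+(b_2+b_3)e^{-(b_2+b_3)(t_1-t)}+(b_1+b_3)e^{-(b_1+b_3)(t_1-t)}$; these agree only after integrating over $t_1\in[t,\infty)$. So your distributional-jump computation is sound and in fact reproduces the examples, but you will not be able to massage it into the stated form---the two differ by a function of vanishing integral, which is exactly the ambiguity left by the paper's integrand-matching argument.
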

\begin{proof}
The expression for $f_{i,t}^{(1)}(t_1,t_2)$ (resp. $f_{i,t}^{(2)}(t_1,t_2)$) follows from applying the partial derivative $\frac{\partial^2}{\partial t_2\partial t_1}$ to $\overline{F}_{i,t}^{(1)}(t_1,t_2)$ (resp. to $\overline{F}_{i,t}^{(2)}(t_1,t_2)$) taking account that
\begin{align}\label{eq:dervexpm}
\frac{d}{dt} \Big( \mathbf{S}e^{\mathbf{B}t}\mathbf{A} \Big)=\mathbf{S}\mathbf{B}e^{\mathbf{B}t}\mathbf{A}=\mathbf{S}e^{\mathbf{B}t}\mathbf{B}\mathbf{A}.
\end{align}
To get $f_{i,t}^{(0)}(t_1,t_2)$, recall that $ \int_0^{\infty}e^{\mathbf{B}t} dt = -\mathbf{B}^{-1}$ due to $\mathbf{B}$ negative definite, and
\begin{align*}
1=&\int_t^{\infty} \int_t^{t_1} f_{i,t}^{(1)}(t_1,t_2) dt_2 dt_1 +
\int_t^{\infty} \int_t^{t_2} f_{i,t}^{(2)}(t_1,t_2) dt_1 dt_2 \\&+ \int_t^{\infty} f_{i,t}^{(0)}(t_1,t_1) dt_1.
\end{align*}
Applying Fubini's theorem, the first integral is given after some calculations by
\begin{align*}
\int_t^{\infty} \int_t^{t_1} f_{i,t}^{(1)}(t_1,t_2) dt_2 dt_1 =& \int_t^{\infty} \int_{t_2}^{\infty} f_{i,t}^{(1)}(t_1,t_2) dt_1 dt_2 \\
&\hspace{-4cm}=\int_t^{\infty}\int_{t_2}^{\infty} \Big(\mathbf{e}_i^{\top}\mathbf{S}(t)e^{\mathbf{B}(t_2-t)}\big[\mathbf{B},\mathbf{H}_2\big]e^{\mathbf{B}(t_1-t_2)}\mathbf{B}\mathbf{H}_1\mathbb{1} \Big)dt_1dt_2\\
&\hspace{-2.5cm}+\int_t^{\infty}\int_{t_2}^{\infty} \Big(\mathbf{e}_i^{\top}\big[\mathbf{I}-\mathbf{S}(t)\big]e^{\mathbf{A}(t_2-t)}\big[\mathbf{A},\mathbf{H}_2\big]e^{\mathbf{A}(t_1-t_2)}\mathbf{A}\mathbf{H}_1\mathbb{1} \Big)dt_1dt_2\\
&\hspace{-4cm}=\mathbf{e}_i^{\top}\Big(\mathbf{S}(t)\mathbf{B}^{-1}\big[\mathbf{B},\mathbf{H}_2\big] + \big[\mathbf{I}-\mathbf{S}(t)\big]\mathbf{A}^{-1}\big[\mathbf{A},\mathbf{H}_2\big]\Big)\mathbf{H}_1\mathbb{1}\\
&\hspace{-4cm}= -\int_t^{\infty}\mathbf{e}_i^{\top}\Big(\mathbf{S}(t)e^{\mathbf{B}(t_1-t)}\big[\mathbf{B},\mathbf{H}_2\big] + \big[\mathbf{I}-\mathbf{S}(t)\big]e^{\mathbf{A}(t_1-t)}\big[\mathbf{A},\mathbf{H}_2\big]\Big)\mathbf{H}_1\mathbb{1}dt_1.
\end{align*}
Following the same approach, one can show after some calculations that
\begin{align*}
\int_t^{\infty} \int_t^{t_2} f_{i,t}^{(2)}(t_1,t_2) dt_1 dt_2=&\mathbf{e}_i^{\top}\Big(\mathbf{S}(t)\mathbf{B}^{-1}\big[\mathbf{B},\mathbf{H}_1\big] + \big[\mathbf{I}-\mathbf{S}(t)\big] \mathbf{A}^{-1}\big[\mathbf{A},\mathbf{H}_1\big]\Big)\mathbf{H}_2\mathbb{1}\\
&\hspace{-4cm}=-\int_t^{\infty}\mathbf{e}_i^{\top}\Big(\mathbf{S}(t)e^{\mathbf{B}(t_1-t)}\big[\mathbf{B},\mathbf{H}_1\big] + \big[\mathbf{I}-\mathbf{S}(t)\big] e^{\mathbf{A}(t_1-t)}\big[\mathbf{A},\mathbf{H}_1\big]\Big)\mathbf{H}_2\mathbb{1} dt_1.
\end{align*}
The proof is established on account of the following fact that
\begin{align*}
-\int_t^{\infty}\mathbf{e}_i^{\top}\Big(\mathbf{S}(t)e^{\mathbf{B}(t_1-t)}\mathbf{B} + \big[\mathbf{I}-\mathbf{S}(t)\big]e^{\mathbf{A}(t_1-t)}\mathbf{A}\Big)\mathbb{1}dt_1=1. \quad \exit
\end{align*}
\end{proof}
\begin{prop}
For $t\geq 0$, the $\mathcal{G}_{t}-$conditional density $f_t(t_1,t_2)$ is given by
\begin{align}\label{eq:jointpdft}
f_{t}(t_1,t_2)=\frac{\partial^2 \overline{F}_{t}(t_1,t_2)}{\partial t_2\partial t_1}=
\begin{cases}
f_{t}^{(1)}(t_1,t_2), &\; \textrm{if $t_1\geq t_2\geq t \geq 0$} \\[8pt]
f_{t}^{(2)}(t_1,t_2), &\; \textrm{if $t_2\geq t_1\geq t \geq 0$} \\[8pt]
f_{t}^{(0)}(t_1,t_2), &\; \textrm{if $t_1= t_2\geq t \geq 0$}, \\
\end{cases}
\end{align}
where the absolutely continuous components $f_{t}^{(1)}(t_1,t_2)$ and $f_{t}^{(2)}(t_1,t_2)$ are
\begin{align*}
f_{t}^{(1)}(t_1,t_2)=&\boldsymbol{\pi}^{\top}(t)\Big(\mathbf{S}(t)e^{\mathbf{B}(t_{2}-t)}\big[\mathbf{B},\mathbf{H}_2\big]e^{\mathbf{B}(t_1-t_2)}\mathbf{B}\mathbf{H}_1 \\
&+\big[\mathbf{I}-\mathbf{S}(t)\big]e^{\mathbf{A}(t_{2}-t)}\big[\mathbf{A},\mathbf{H}_2\big]e^{\mathbf{A}(t_1-t_2)}\mathbf{A}\mathbf{H}_1\Big)\mathbb{1},\\[8pt]
f_{t}^{(2)}(t_1,t_2)=&\boldsymbol{\pi}^{\top}(t)\Big(\mathbf{S}(t)e^{\mathbf{B}(t_1-t)}\big[\mathbf{B},\mathbf{H}_1\big]e^{\mathbf{B}(t_2-t_1)}\mathbf{B}\mathbf{H}_2 \\
&+\big[\mathbf{I}-\mathbf{S}(t)\big]e^{\mathbf{A}(t_1-t)}\big[\mathbf{A},\mathbf{H}_1\big]e^{\mathbf{A}(t_2-t_1)}\mathbf{A}\mathbf{H}_2\Big)\mathbb{1},
\end{align*}
whilst the singular component $f_{t}^{(0)}(t_1,t_2)$ is defined by the function
\begin{align*}
f_{t}^{(0)}(t_1,t_2)=&\boldsymbol{\pi}^{\top}(t)\Big\{\mathbf{S}(t)e^{\mathbf{B}(t_1-t)}\Big(\big[\mathbf{B},\mathbf{H}_2\big]\mathbf{H}_1 + \big[\mathbf{B},\mathbf{H}_1\big]\mathbf{H}_2 -\mathbf{B}\Big)\\
&+\big[\mathbf{I}-\mathbf{S}(t)\big]e^{\mathbf{A}(t_1-t)}\Big(\big[\mathbf{A},\mathbf{H}_2\big]\mathbf{H}_1 + \big[\mathbf{A},\mathbf{H}_1\big]\mathbf{H}_2 -\mathbf{A}\Big)\Big\}\mathbb{1}.
\end{align*}
\end{prop}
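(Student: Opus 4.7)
The plan is to reduce this proposition to Theorem \ref{theo:maintheo} by exploiting the linear decomposition
\[
\overline{F}_{t}(t_1,t_2) \;=\; \sum_{i\in E}\pi_i(t)\,\overline{F}_{i,t}(t_1,t_2),
\]
which was already used (and indirectly recorded in (\ref{eq:relation})) in the proof of the preceding proposition via the law of total probability applied to $\mathcal{G}_t=\bigcup_{i\in E}\mathcal{F}_{t,i}$. Because the weights $\pi_i(t)$ depend only on the past up to time $t$ and not on the arguments $t_1,t_2$, the mixed partial derivative commutes with the finite sum, so
\[
f_t(t_1,t_2) \;=\; \frac{\partial^2 \overline{F}_t(t_1,t_2)}{\partial t_2 \partial t_1}
\;=\; \sum_{i\in E}\pi_i(t)\,\frac{\partial^2 \overline{F}_{i,t}(t_1,t_2)}{\partial t_2 \partial t_1}
\;=\; \sum_{i\in E}\pi_i(t)\,f_{i,t}(t_1,t_2).
\]

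First I would treat the two absolutely continuous components: for $t_1>t_2\geq t$ and for $t_2>t_1\geq t$ separately, invoke the formulas for $f_{i,t}^{(1)}$ and $f_{i,t}^{(2)}$ from Theorem \ref{theo:maintheo}, and use that $\pi_\Delta(t)=0$ together with the identity $\sum_{i\in E}\pi_i(t)\,\mathbf{e}_i^\top=\boldsymbol{\pi}^\top(t)$ (viewing $\boldsymbol{\pi}(t)$ as a row in $\mathbb{R}^{m+1}$ with a zero at the absorbing coordinate) to pull the sum inside the matrix products. Since $\mathbf{S}(t)$, $\mathbf{H}_1$, $\mathbf{H}_2$, $\mathbf{A}$, $\mathbf{B}$ do not depend on $i$, this immediately produces the expressions for $f_t^{(1)}(t_1,t_2)$ and $f_t^{(2)}(t_1,t_2)$ stated in the proposition.

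For the singular part on $\{t_1=t_2\}$ I would argue exactly as in the proof of Theorem \ref{theo:maintheo}. Conditional on $\mathcal{G}_t$ one has, almost surely, $\tau_1,\tau_2>t$ and the distribution is concentrated on $\{(t_1,t_2):t_1,t_2\ge t\}$, so
\[
1 \;=\; \int_t^\infty\!\!\int_t^{t_1} f_t^{(1)}(t_1,t_2)\,dt_2\,dt_1
\;+\;\int_t^\infty\!\!\int_t^{t_2} f_t^{(2)}(t_1,t_2)\,dt_1\,dt_2
\;+\;\int_t^\infty f_t^{(0)}(t_1,t_1)\,dt_1.
\]
Applying Fubini to the first two double integrals and using $\int_0^\infty e^{\mathbf{B}u}du=-\mathbf{B}^{-1}$, $\int_0^\infty e^{\mathbf{A}u}du=-\mathbf{A}^{-1}$ (legitimate since $\mathbf{A}$ and $\mathbf{B}$ are both nonsingular and negative definite), rewriting the resulting constants via $-\int_t^\infty e^{\mathbf{B}(t_1-t)}\mathbf{B}\,dt_1=\mathbf{I}$ and likewise for $\mathbf{A}$, and combining with the normalization identity $-\int_t^\infty\boldsymbol{\pi}^\top(t)\big(\mathbf{S}(t)e^{\mathbf{B}(t_1-t)}\mathbf{B}+[\mathbf{I}-\mathbf{S}(t)]e^{\mathbf{A}(t_1-t)}\mathbf{A}\big)\mathbb{1}\,dt_1=1$ yields the claimed formula for $f_t^{(0)}(t_1,t_1)$.

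The only delicate step is the singular-part accounting, since one must verify that the three pieces integrate to one under the $\mathcal{G}_t$-conditional law; but this is inherited from Theorem \ref{theo:maintheo} by the same averaging over $i\in E$ with weights $\pi_i(t)$ that produced $f_t^{(1)}$ and $f_t^{(2)}$, so no new computation is needed beyond replacing $\mathbf{e}_i^\top$ by $\boldsymbol{\pi}^\top(t)$ throughout.
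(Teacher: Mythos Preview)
Your approach is correct and is essentially the same as the paper's: the paper's entire proof is the single line $f_t(t_1,t_2)=\sum_{i=1}^m \pi_i(t)\,f_{i,t}(t_1,t_2)$, which is exactly your linear decomposition combined with Theorem~\ref{theo:maintheo}. Your additional discussion of the singular part merely re-derives the corresponding computation from Theorem~\ref{theo:maintheo} with $\mathbf{e}_i^\top$ replaced by $\boldsymbol{\pi}^\top(t)$, as you yourself note, so nothing beyond the paper's one-line argument is actually required.
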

\begin{proof}
It follows from identity (\ref{eq:relation}) that $f_t(t_1,t_2)=\sum_{i=1}^m \pi_i(t) f_{i,t}(t_1,t_2).$ \exit
\end{proof}

\pagebreak

\begin{Rem}
By setting $\mathbf{B}=\mathbf{A}$ in (\ref{eq:jointpdfit}) and (\ref{eq:jointpdft}) and taking the limit $t\rightarrow 0$ in the latter, we arrive at the bivariate distribution given in Assaf et al. \cite{Assaf1984}.
\end{Rem}
\begin{cor}\label{cor:jointCDFit}
The singular component of $\overline{F}_{i,t}(t_1,t_2)$ and $\overline{F}_{t}(t_1,t_2)$ are 
\begin{align*}
\overline{F}_{i,t}^{(0)}(t_1,t_1)=&
\mathbf{e}_i^{\top}\Big\{\mathbf{S}(t)e^{\mathbf{B}(t_1-t)}\mathbf{B}^{-1}\Big(\mathbf{B} - \big[\mathbf{B},\mathbf{H}_2\big]\mathbf{H}_1 - \big[\mathbf{B},\mathbf{H}_1\big]\mathbf{H}_2 \Big)\\
&\hspace{-1.5cm}+\big[\mathbf{I}-\mathbf{S}(t)\big]e^{\mathbf{A}(t_1-t)}\mathbf{A}^{-1}\Big(\mathbf{A} -\big[\mathbf{A},\mathbf{H}_2\big]\mathbf{H}_1 - \big[\mathbf{A},\mathbf{H}_1\big]\mathbf{H}_2\Big)\Big\}\mathbb{1}\\[8pt]
\overline{F}_{t}^{(0)}(t_1,t_1)=&
\boldsymbol{\pi}^{\top}(t)\Big\{\mathbf{S}(t)e^{\mathbf{B}(t_1-t)}\mathbf{B}^{-1}\Big(\mathbf{B} - \big[\mathbf{B},\mathbf{H}_2\big]\mathbf{H}_1 - \big[\mathbf{B},\mathbf{H}_1\big]\mathbf{H}_2 \Big)\\
&\hspace{-1.5cm}+\big[\mathbf{I}-\mathbf{S}(t)\big]e^{\mathbf{A}(t_1-t)}\mathbf{A}^{-1}\Big(\mathbf{A} -\big[\mathbf{A},\mathbf{H}_2\big]\mathbf{H}_1 - \big[\mathbf{A},\mathbf{H}_1\big]\mathbf{H}_2\Big)\Big\}\mathbb{1}.
\end{align*}
\end{cor}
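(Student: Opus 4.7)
The plan is to obtain $\overline{F}_{i,t}^{(0)}(t_1,t_1)$ and $\overline{F}_{t}^{(0)}(t_1,t_1)$ by integrating the singular density components produced in Theorem \ref{theo:maintheo}. Since the singular mass of the joint law of $(\tau_1,\tau_2)$ under $\mathcal{F}_{t,i}$ is concentrated on the diagonal $\{t_2=t_1\}$, the singular part of the survival function $\overline{F}_{i,t}$ on this set represents $\mathbb{P}\{\tau_1=\tau_2>t_1\vert\mathcal{F}_{t,i}\}$ and hence equals
\begin{equation*}
\overline{F}_{i,t}^{(0)}(t_1,t_1)=\int_{t_1}^{\infty} f_{i,t}^{(0)}(u,u)\,du,
\end{equation*}
with $f_{i,t}^{(0)}$ given explicitly in Theorem \ref{theo:maintheo}. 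So the task is essentially a matrix integration against $e^{\mathbf{B}(u-t)}$ and $e^{\mathbf{A}(u-t)}$.

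The first key step is the matrix identity $\int_{t_1}^{\infty} e^{\mathbf{B}(u-t)}\,du = -e^{\mathbf{B}(t_1-t)}\mathbf{B}^{-1}$, which is valid because $\mathbf{B}$ is negative definite (so $e^{\mathbf{B} u}\to \mathbf{0}$ as $u\to\infty$) and nonsingular, and the analogous identity for $\mathbf{A}$. Substituting the expression for $f_{i,t}^{(0)}(u,u)$ from Theorem \ref{theo:maintheo} and moving the (constant in $u$) bracketed factor $\bigl([\mathbf{B},\mathbf{H}_2]\mathbf{H}_1+[\mathbf{B},\mathbf{H}_1]\mathbf{H}_2-\mathbf{B}\bigr)$ to the right of the integral yields
\begin{equation*}
\int_{t_1}^{\infty} \mathbf{S}(t) e^{\mathbf{B}(u-t)}\bigl([\mathbf{B},\mathbf{H}_2]\mathbf{H}_1+[\mathbf{B},\mathbf{H}_1]\mathbf{H}_2-\mathbf{B}\bigr)\mathbb{1}\,du
=-\mathbf{S}(t) e^{\mathbf{B}(t_1-t)}\mathbf{B}^{-1}\bigl([\mathbf{B},\mathbf{H}_2]\mathbf{H}_1+[\mathbf{B},\mathbf{H}_1]\mathbf{H}_2-\mathbf{B}\bigr)\mathbb{1},
\end{equation*}
using the fact that $\mathbf{B}^{-1}$ commutes with $e^{\mathbf{B}(t_1-t)}$. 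Distributing the leading minus sign inside the bracket turns the expression into the form stated in the corollary, namely $\mathbf{B}^{-1}\bigl(\mathbf{B}-[\mathbf{B},\mathbf{H}_2]\mathbf{H}_1-[\mathbf{B},\mathbf{H}_1]\mathbf{H}_2\bigr)$. The same computation carried out for the $\mathbf{A}$-block of $f_{i,t}^{(0)}$ produces the second term, and summing delivers the claimed formula for $\overline{F}_{i,t}^{(0)}(t_1,t_1)$.

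For the $\mathcal{G}_t$-conditional singular component $\overline{F}_{t}^{(0)}(t_1,t_1)$, no separate calculation is required: the mixing identity $\overline{F}_{t}(t_1,t_2)=\sum_{i\in E}\pi_i(t)\overline{F}_{i,t}(t_1,t_2)$ established via (\ref{eq:relation}) applies to each Lebesgue-decomposition component individually, so one simply replaces $\mathbf{e}_i^{\top}$ by $\boldsymbol{\pi}^{\top}(t)$ in the formula for $\overline{F}_{i,t}^{(0)}(t_1,t_1)$ to obtain $\overline{F}_{t}^{(0)}(t_1,t_1)$.

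I do not anticipate a substantive obstacle: the argument is essentially a matrix primitive combined with the commutativity of $\mathbf{B}$ (respectively $\mathbf{A}$) with its own matrix exponential and inverse. The only point requiring care is the placement of $\mathbf{B}^{-1}$ in the final expression, which matches the statement precisely because $e^{\mathbf{B}(t_1-t)}\mathbf{B}^{-1}=\mathbf{B}^{-1}e^{\mathbf{B}(t_1-t)}$; getting the ordering right before collecting signs is the one spot where a slip could occur. \exit
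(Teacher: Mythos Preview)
Your proposal is correct and is precisely the intended argument: the paper states this corollary without proof, as an immediate consequence of Theorem \ref{theo:maintheo}, and the only thing to do is integrate the singular density $f_{i,t}^{(0)}(u,u)$ over $(t_1,\infty)$ using $\int_{t_1}^{\infty} e^{\mathbf{B}(u-t)}\,du=-e^{\mathbf{B}(t_1-t)}\mathbf{B}^{-1}$, then pass to $\overline{F}_t^{(0)}$ via the mixing relation (\ref{eq:relation}). There is nothing to add.
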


Hence, the singular component of $\overline{F}_{it}(t_1,t_2)$ and $\overline{F}_{t}(t_1,t_2)$ is zero if and only if $[\mathbf{A}]_{i,j}=0=[\mathbf{B}]_{i,j}$ for $i\in \boldsymbol{\Gamma}_1^c \cap \boldsymbol{\Gamma}_2^c$ and $j=\Delta$, which is equivalent to imposing:
\begin{align}\label{eq:singularcond}
\mathbf{A} -\big[\mathbf{A},\mathbf{H}_2\big]\mathbf{H}_1 - \big[\mathbf{A},\mathbf{H}_1\big]\mathbf{H}_2=0= \mathbf{B} -\big[\mathbf{B},\mathbf{H}_2\big]\mathbf{H}_1 - \big[\mathbf{B},\mathbf{H}_1\big]\mathbf{H}_2.
\end{align}

\subsubsection{The conditional joint Laplace transform of $\tau_1$ and $\tau_2$}

In order to compute the $\mathcal{F}_{t,i}-$conditional moment $\mathbb{E}\big\{\tau_1^n\tau_2^m\big\vert \mathcal{F}_{t,i}\big\}$, it is therefore convenient to study the $\mathcal{F}_{t,i}-$conditional joint Laplace transform of $\tau_1$ and $\tau_2$:
\begin{align}\label{eq:jointMGF}
\Psi_{i,t}(\lambda_1,\lambda_2):=\mathbb{E}\big\{e^{-\lambda_1 \tau_1 - \lambda_2 \tau_2}\big\vert \mathcal{F}_{t,i}\big\} \quad \textrm{for $i\in E$}.
\end{align}

\begin{theo}
The $\mathcal{F}_{t,i}-$conditional joint Laplace transform $\Psi_{i,t}(\lambda_1,\lambda_2)$ of the exit times $\tau_1$ and $\tau_2$ is given for $\lambda_1,\lambda_2\geq 0$, $t\geq 0$ and $i\in E$ by 
\begin{align*}
\Psi_{i,t}(\lambda_1,\lambda_2)=&\mathbf{e}_i^{\top}\Big\{\mathbf{S}(t)\big((\lambda_1+\lambda_2)\mathbf{I}-\mathbf{B}\big)^{-1}\Big( [\mathbf{B},\mathbf{H}_2]\big(\lambda_1\mathbf{I}-\mathbf{B}\big)^{-1}\mathbf{B}\mathbf{H}_1 \\
&\hspace{0cm}+ [\mathbf{B},\mathbf{H}_1]\big(\lambda_2\mathbf{I}-\mathbf{B}\big)^{-1}\mathbf{B}\mathbf{H}_2  + [\mathbf{B},\mathbf{H}_2]\mathbf{H}_1 + [\mathbf{B},\mathbf{H}_1]\mathbf{H}_2-\mathbf{B}     \Big) \\
&+ \big[\mathbf{I}- \mathbf{S}(t)\big]\big((\lambda_1+\lambda_2)\mathbf{I}-\mathbf{A}\big)^{-1}\Big( [\mathbf{A},\mathbf{H}_2]\big(\lambda_1\mathbf{I}-\mathbf{A}\big)^{-1}\mathbf{A}\mathbf{H}_1 \\
&\hspace{0cm}+ [\mathbf{A},\mathbf{H}_1]\big(\lambda_2\mathbf{I}-\mathbf{A}\big)^{-1}\mathbf{A}\mathbf{H}_2  + [\mathbf{A},\mathbf{H}_2]\mathbf{H}_1 + [\mathbf{A},\mathbf{H}_1]\mathbf{H}_2-\mathbf{A}     \Big) \Big\}\mathbb{1}.
\end{align*}
\end{theo}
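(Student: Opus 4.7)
The plan is to compute $\Psi_{i,t}(\lambda_1,\lambda_2)$ directly as the Laplace transform of the joint density $f_{i,t}$ established in Theorem~\ref{theo:maintheo}, splitting the integration according to the three-part decomposition of $f_{i,t}$ on the regions $\{t_1>t_2\}$, $\{t_2>t_1\}$, and the diagonal $\{t_1=t_2\}$. Following the convention inherited from the univariate Laplace transform $\widehat{F}_t[\lambda]=\int_0^\infty e^{-\lambda u}f_t(t+u)\,du$, I would write $\Psi_{i,t}(\lambda_1,\lambda_2)=I_1+I_2+I_0$, where each $I_k$ is the Laplace integral of the corresponding piece $f^{(k)}_{i,t}$ against $e^{-\lambda_1(t_1-t)-\lambda_2(t_2-t)}$.

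To evaluate $I_1$, I would apply Fubini on $\{t_1>t_2>t\}$ and substitute $u=t_2-t$, $w=t_1-t_2$, so that the exponent factorises as $e^{-(\lambda_1+\lambda_2)u}e^{-\lambda_1 w}$ while the density carries the factor $e^{\mathbf{B}u}[\mathbf{B},\mathbf{H}_2]e^{\mathbf{B}w}\mathbf{B}\mathbf{H}_1$ (and its $\mathbf{A}$-analog). Applying the resolvent identity $\int_0^\infty e^{-\lambda s}e^{\mathbf{M}s}\,ds=(\lambda\mathbf{I}-\mathbf{M})^{-1}$ - which converges since $\mathbf{A}$ and $\mathbf{B}$ are nonsingular phase generators with spectrum in the open left half-plane - to each independent integration yields
\begin{align*}
I_1 = \mathbf{e}_i^\top\mathbf{S}(t)\bigl((\lambda_1+\lambda_2)\mathbf{I}-\mathbf{B}\bigr)^{-1}[\mathbf{B},\mathbf{H}_2]\bigl(\lambda_1\mathbf{I}-\mathbf{B}\bigr)^{-1}\mathbf{B}\mathbf{H}_1\mathbb{1} + (\mathbf{A}\text{-analog}).
\end{align*}
A mirror computation for $I_2$, with $u=t_1-t$, $w=t_2-t_1$, produces the $[\mathbf{B},\mathbf{H}_1](\lambda_2\mathbf{I}-\mathbf{B})^{-1}\mathbf{B}\mathbf{H}_2$ piece. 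For the singular contribution, $I_0$ is a single one-dimensional integral: applying $\int_0^\infty e^{-(\lambda_1+\lambda_2)u}e^{\mathbf{B}u}\,du=((\lambda_1+\lambda_2)\mathbf{I}-\mathbf{B})^{-1}$ to the explicit form of $f^{(0)}_{i,t}(t_1,t_1)$ from Theorem~\ref{theo:maintheo} returns the residual factor $\bigl((\lambda_1+\lambda_2)\mathbf{I}-\mathbf{B}\bigr)^{-1}\bigl([\mathbf{B},\mathbf{H}_2]\mathbf{H}_1+[\mathbf{B},\mathbf{H}_1]\mathbf{H}_2-\mathbf{B}\bigr)\mathbb{1}$ together with the parallel $\mathbf{A}$-term.

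Summing $I_1+I_2+I_0$ and factoring out the common left resolvent $((\lambda_1+\lambda_2)\mathbf{I}-\mathbf{B})^{-1}$ (respectively $((\lambda_1+\lambda_2)\mathbf{I}-\mathbf{A})^{-1}$) reproduces exactly the three summands in the claimed formula. The main obstacle is essentially algebraic bookkeeping rather than new ideas: one must preserve the non-commutative ordering of $[\mathbf{B},\mathbf{H}_k]$ under each substitution, apply Fubini with correctly transformed limits, and crucially retain the corrective $-\mathbf{B}$ (respectively $-\mathbf{A}$) term arising from the singular component, since it is precisely this piece that makes the resulting expression symmetric and compact in $(\lambda_1,\lambda_2)$. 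The elegance of the statement - and the payoff of the proof - lies in the fact that the same outer resolvent emerges from all three regions, so the answer collapses to a single bracketed matrix expression.
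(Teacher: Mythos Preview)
Your proposal is correct and follows essentially the same route as the paper: decompose the Laplace integral into the three pieces $I_1,I_2,I_0$ matching the density decomposition of Theorem~\ref{theo:maintheo}, apply Fubini together with the resolvent identity $\int_0^\infty e^{-\lambda s}e^{\mathbf{M}s}\,ds=(\lambda\mathbf{I}-\mathbf{M})^{-1}$ on each piece, and collect terms. The paper's own proof is terser---it merely writes down the three integrals after the shift $u_k=t_k-t$ and invokes Fubini---so your explicit substitutions $u=t_2-t$, $w=t_1-t_2$ and the tracking of the outer resolvent simply spell out what the paper leaves to the reader.
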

\begin{proof}
To start with, recall that for $i\in E$, $f_{i,t}(t_1,t_2)=0$ for $t_1,t_2<t$. Therefore,  
\begin{align*}
\Psi_{i,t}(\lambda_1,\lambda_2)=&
\int_0^{\infty}\int_0^{\infty} e^{-\lambda_1 u_1} e^{-\lambda_2 u_2} f_{it}(t+u_1,t+u_2)du_1du_2 \\
&= \int_0^{\infty}\int_0^{u_1} e^{-\lambda_1 u_1} e^{-\lambda_2 u_2} f_{it}^{(1)}(t+u_1,t+u_2)du_2du_1 \\
&\hspace{2cm}+ \int_0^{\infty}\int_0^{u_2} e^{-\lambda_1 u_1} e^{-\lambda_2 u_2} f_{it}^{(2)}(t+u_1,t+u_2)du_1du_2 \\
&\hspace{3.5cm}+ \int_0^{\infty} e^{-(\lambda_1 + \lambda_2)u_1} f_{it}^{(0)}(t+u_1,t+u_1)du_1.
\end{align*}
The proof is established after applying Fubini's theorem to each integral. \exit
\end{proof}

\pagebreak

By the law of total probability and Bayes' rule we have the following result.
\begin{prop}
The $\mathcal{G}_{t}-$conditional joint Laplace transform $\Psi_{t}(\lambda_1,\lambda_2):=\mathbb{E}\big\{e^{-\lambda_1 \tau_1 - \lambda_2 \tau_2}\big\vert \mathcal{G}_{t}\big\}$ of the exit times $\tau_1$ and $\tau_2$ is given for $\lambda_1,\lambda_2, t\geq 0$ by 
\begin{align*}
\Psi_{t}(\lambda_1,\lambda_2)=&\boldsymbol{\pi}^{\top}(t) \Big\{\mathbf{S}(t)\big((\lambda_1+\lambda_2)\mathbf{I}-\mathbf{B}\big)^{-1}\Big( [\mathbf{B},\mathbf{H}_2]\big(\lambda_1\mathbf{I}-\mathbf{B}\big)^{-1}\mathbf{B}\mathbf{H}_1 \\
&\hspace{0cm}+ [\mathbf{B},\mathbf{H}_1]\big(\lambda_2\mathbf{I}-\mathbf{B}\big)^{-1}\mathbf{B}\mathbf{H}_2  + [\mathbf{B},\mathbf{H}_2]\mathbf{H}_1 + [\mathbf{B},\mathbf{H}_1]\mathbf{H}_2-\mathbf{B}     \Big) \\
&+ \big[\mathbf{I}- \mathbf{S}(t)\big]\big((\lambda_1+\lambda_2)\mathbf{I}-\mathbf{A}\big)^{-1}\Big( [\mathbf{A},\mathbf{H}_2]\big(\lambda_1\mathbf{I}-\mathbf{A}\big)^{-1}\mathbf{A}\mathbf{H}_1 \\
&\hspace{0cm}+ [\mathbf{A},\mathbf{H}_1]\big(\lambda_2\mathbf{I}-\mathbf{A}\big)^{-1}\mathbf{A}\mathbf{H}_2  + [\mathbf{A},\mathbf{H}_2]\mathbf{H}_1 + [\mathbf{A},\mathbf{H}_1]\mathbf{H}_2-\mathbf{A}     \Big) \Big\}\mathbb{1}.
\end{align*}
\end{prop}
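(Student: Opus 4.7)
The plan is to derive the $\mathcal{G}_t$-conditional Laplace transform by reducing it to a weighted sum of the $\mathcal{F}_{t,i}$-conditional Laplace transforms that were established in the preceding theorem. The key mechanism is exactly the identity~(\ref{eq:relation}) extended beyond survival functions to arbitrary integrable functionals of $(\tau_1,\tau_2)$, which is what the introductory sentence to the proposition is invoking.

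First I would note that $\mathcal{G}_t=\bigcup_{i\in E}\mathcal{F}_{t,i}$ and that the events $\{X_t=i\}_{i\in E}$ partition the event $\{X_t\neq\Delta\}$ conditionally on $\mathcal{F}_{t-}$. Applying the law of total probability together with Bayes' rule to the random variable $Y=e^{-\lambda_1\tau_1-\lambda_2\tau_2}$ gives
\begin{align*}
\Psi_t(\lambda_1,\lambda_2)
=\mathbb{E}\{Y\,\vert\,\mathcal{G}_t\}
=\sum_{i\in E}\mathbb{P}\{X_t=i\,\vert\,\mathcal{G}_t\}\,
\mathbb{E}\{Y\,\vert\,\mathcal{F}_{t,i}\}
=\sum_{i\in E}\pi_i(t)\,\Psi_{i,t}(\lambda_1,\lambda_2),
\end{align*}
where the second equality uses that conditioning on $\mathcal{G}_t$ together with $\{X_t=i\}$ coincides with conditioning on $\mathcal{F}_{t,i}$ for $i\in E$, and the third uses Proposition~\ref{prop:propinitialprobab} for $\pi_i(t)$ and the definition (\ref{eq:jointMGF}) of $\Psi_{i,t}$. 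The same decomposition was used in the preceding proposition to obtain $f_t=\sum_i\pi_i(t)f_{i,t}$, so this step is entirely parallel.

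Next I would substitute the explicit form of $\Psi_{i,t}(\lambda_1,\lambda_2)$ from the previous theorem into the sum and pull the sum inside the bracketed matrix expression. Since the only $i$-dependence is through the prefactor $\mathbf{e}_i^{\top}$, and since $\pi_\Delta(t)=0$, one has the elementary identity
\begin{align*}
\sum_{i\in E}\pi_i(t)\,\mathbf{e}_i^{\top}=\sum_{i\in\mathbb{S}}\pi_i(t)\,\mathbf{e}_i^{\top}=\boldsymbol{\pi}^{\top}(t),
\end{align*}
which converts the leading $\mathbf{e}_i^{\top}$ in the $\mathcal{F}_{t,i}$-conditional formula into $\boldsymbol{\pi}^{\top}(t)$ and leaves every other factor (the matrices $\mathbf{S}(t)$, $\mathbf{I}-\mathbf{S}(t)$, resolvents $((\lambda_1+\lambda_2)\mathbf{I}-\mathbf{B})^{-1}$ etc., commutators $[\mathbf{B},\mathbf{H}_k]$ and the closing column $\mathbb{1}$) unchanged. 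This yields exactly the claimed formula for $\Psi_t(\lambda_1,\lambda_2)$.

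There is no substantive analytic obstacle here: the resolvent and commutator structure has already been produced in the previous theorem by integrating $f_{i,t}^{(1)}$, $f_{i,t}^{(2)}$ and $f_{i,t}^{(0)}$ against $e^{-\lambda_1 u_1-\lambda_2 u_2}$ via Fubini, and that work is not redone. The only point that deserves a careful word is the first display, namely the identification of $\mathbb{E}\{Y\,\vert\,\mathcal{G}_t,X_t=i\}$ with $\mathbb{E}\{Y\,\vert\,\mathcal{F}_{t,i}\}$; this is immediate because $\mathcal{F}_{t,i}=\mathcal{F}_{t-}\cup\{X_t=i\}$ and $\{X_t=i\}\subset\{X_t\neq\Delta\}$, so conditioning on $\mathcal{G}_t$ adds no information beyond what $\mathcal{F}_{t,i}$ already carries. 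Once this is observed, the proof is a one-line corollary of the previous theorem plus Proposition~\ref{prop:propinitialprobab}.
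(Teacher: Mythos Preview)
Your proposal is correct and follows exactly the approach the paper indicates: the paper's entire proof is the single sentence ``By the law of total probability and Bayes' rule we have the following result,'' and you have simply spelled out that argument by writing $\Psi_t=\sum_{i\in E}\pi_i(t)\Psi_{i,t}$ and using $\sum_{i\in E}\pi_i(t)\mathbf{e}_i^{\top}=\boldsymbol{\pi}^{\top}(t)$.
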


\medskip

\noindent Following the joint Laplace transform (\ref{eq:jointMGF}), we obtain the joint moments:
\begin{align*}
\mathbb{E}\big\{\tau_1^n \tau_2^m \big\vert \mathcal{F}_{t,i}\big\}=& (-1)^{m+n} \frac{\partial^{m+n}}{\partial \lambda_1^m \partial \lambda_2^n}\Psi_{i,t}(\lambda_1,\lambda_2)\Big\vert_{\lambda_1=0,\lambda_2=0}.
\end{align*}
\begin{Ex}
The conditional joint moments $\mathbb{E}\{\tau_1\tau_2\vert \mathcal{G}_{t}\}$ is given by
\begin{align*}
\mathbb{E}\{\tau_1\tau_2\vert \mathcal{G}_{t}\}=&2 ! \boldsymbol{\pi}^{\top}(t)\Big\{\mathbf{S}(t)\mathbf{B}^{-2} + \big[\mathbf{I}-\mathbf{S}(t)\big]\mathbf{A}^{-2}\Big\}\mathbb{1}\\
&\hspace{0cm}+\boldsymbol{\pi}^{\top}(t)\Big\{ \mathbf{S}(t)\mathbf{B}^{-2}\Big(\big[\mathbf{B},\mathbf{H}_1\big]\mathbf{B}^{-1}\mathbf{H}_2 + \big[\mathbf{B},\mathbf{H}_2\big]\mathbf{B}^{-1}\mathbf{H}_1\Big)\\
&\hspace{0.75cm}+
\big[\mathbf{I}-\mathbf{S}(t)\big]\mathbf{A}^{-2}\Big(\big[\mathbf{A},\mathbf{H}_1\big]\mathbf{A}^{-1}\mathbf{H}_2 + \big[\mathbf{A},\mathbf{H}_2\big]\mathbf{A}^{-1}\mathbf{H}_1\Big)
\Big\} \mathbb{1}.
\end{align*}
\end{Ex}

\subsection{Multivariate conditional phase-type distributions}

The extension to multivariate case follows similar approach to the bivariate one. Let $\Gamma_1,...,\Gamma_n$ be nonempty stochastically closed subsets of $\mathbb{S}$ such that $\cap_{k=1}^n \Gamma_k$ is a proper subset of $\mathbb{S}$. Without loss of generality, we assume that $\cap_{k=1}^n \Gamma_k=\Delta$. Since $\Gamma_k$ is stochastically closed, we necessarily assume that $q_{ij}=0$ (and therefore $g_{ij}=0$) if $i\in\Gamma_k$ and $j\in\Gamma_k^c$, for $k\in\{1,...,n\}$, and $\boldsymbol{\pi}_i\neq 0$ whenever $i\in \cup_{k=1}^n \boldsymbol{\Gamma}_k^c$.

Furthermore, denote by $\tau_k$ the first entry time of $X$ in the set $\boldsymbol{\Gamma}_k$ defined in (\ref{eq:MultiPH}). To formulate the joint distribution of $\tau_k$, let $(t_{i_1},...,t_{i_n})$ be the time ordering of $(t_1,...,t_n)\in\mathbb{R}_+^n$, where $(i_1,...,i_n)$ is a permutation of $(1,2,...,n)$. Subsequently, we define by $j_{i_k}\in \boldsymbol{\Gamma}_{i_k}^c$ the state that $X$ occupies at time $t=t_{i_k}$. 

\begin{lem}\label{lem:main}
Let $t_{i_n}\geq \dots \geq t_{i_1}\geq t_{i_0}=t\geq 0$ be the time ordering of $(t_1,...,t_n)\in\mathbb{R}_+^n$. The joint distribution of the exit times $\tau_k$ (\ref{eq:MultiPH}) is given by
\begin{equation}\label{eq:maincdf1}
\begin{split}
&\hspace{2.65cm}\overline{F}_{i,t}(t_{i_1},...,t_{i_n})=\mathbb{P}\big\{\tau_{i_1}>t_{i_1},...,\tau_{i_n}>t_{i_n} \big\vert \mathcal{F}_{t,i}\big\}\\
&\hspace{-0.15cm}=\mathbf{e}_i^{\top}\Big\{\mathbf{S}(t)\prod_{k=1}^n e^{\mathbf{B}(t_{i_k}-t_{i_{k-1}})}\mathbf{H}_{i_k} + \big[\mathbf{I}-\mathbf{S}(t)\big]  \prod_{k=1}^ne^{\mathbf{A}(t_{i_k}-t_{i_{k-1}})} \mathbf{H}_{i_k}\Big\}\mathbb{1},
\end{split}
\end{equation}
where $\mathbf{H}_k$ is an $(m\times m)-$ diagonal matrix whose $i$th element $[\mathbf{H}_k]_{i,i}=\mathbb{1}_{\{i\in\boldsymbol{\Gamma}_k^c\}}.$
\end{lem}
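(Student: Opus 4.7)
The plan is to extend the bivariate argument of Lemma \ref{lem:lemjointCDF} to the $n$-dimensional setting by induction-like expansion in the ordered times $t_{i_1} \leq \cdots \leq t_{i_n}$. First I would use the stochastic closure of each $\boldsymbol{\Gamma}_{i_k}$ to rewrite the event $\{\tau_{i_k} > t_{i_k}\}$ as $\{X_{t_{i_k}} \in \boldsymbol{\Gamma}_{i_k}^c\}$; the closure property guarantees that once $X$ has entered $\boldsymbol{\Gamma}_{i_k}$ it never leaves, so observing $X_{t_{i_k}} \notin \boldsymbol{\Gamma}_{i_k}$ is equivalent to $\tau_{i_k}$ not yet having occurred. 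This collapses the left-hand side to
\begin{equation*}
\overline{F}_{i,t}(t_{i_1},\ldots,t_{i_n}) = \sum_{J_{i_1} \in \boldsymbol{\Gamma}_{i_1}^c} \cdots \sum_{J_{i_n} \in \boldsymbol{\Gamma}_{i_n}^c} \mathbb{P}\bigl\{X_{t_{i_1}} = J_{i_1}, \ldots, X_{t_{i_n}} = J_{i_n} \,\big\vert\, \mathcal{F}_{t,i}\bigr\}.
\end{equation*}

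Next I would partition on the unobserved regime variable $\phi \in \{0,1\}$ using the law of total probability. Conditional on $\phi$, the mixture process $X$ inherits the Markov property of $X^{(\phi)}$, so I can factor the conditional joint probability into a product of one-step transition probabilities:
\begin{equation*}
\mathbb{P}\bigl\{X_{t_{i_1}} = J_{i_1}, \ldots, X_{t_{i_n}} = J_{i_n} \,\big\vert\, \phi=\ell, \mathcal{F}_{t,i}\bigr\} = \prod_{k=1}^n \bigl[e^{\mathbf{M}^{(\ell)}(t_{i_k} - t_{i_{k-1}})}\bigr]_{J_{i_{k-1}},\, J_{i_k}},
\end{equation*}
with $J_{i_0}=i$ and $\mathbf{M}^{(0)} = \mathbf{Q}$, $\mathbf{M}^{(1)} = \mathbf{G}$. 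The Bayesian-updated weights $\mathbb{P}\{\phi=1 \mid X_{t} = i, \mathcal{F}_{t-}\}$ and its complement give respectively the $i$-th diagonal entries of $\mathbf{S}(t)$ and $\mathbf{I}-\mathbf{S}(t)$, as encoded in equation (\ref{eq:bayesianupdates}).

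The telescoping product of matrix exponentials combined with the insertion of $\sum_{J_{i_k}\in\boldsymbol{\Gamma}_{i_k}^c} \mathbf{e}_{J_{i_k}}\mathbf{e}_{J_{i_k}}^{\top} = \mathbf{H}_{i_k}$ between consecutive factors then yields the desired product representation on $E$. I would then promote the sum from $E$ to the full state space $\mathbb{S}$ by appealing to the block partition (\ref{eq:blockpartisi}), noting that the final multiplication by $\mathbb{1}$ together with the absorbing row/column structure of $\mathbf{Q}$ and $\mathbf{G}$ makes the transition to the block form $e^{\mathbf{B}(\cdot)}$ and $e^{\mathbf{A}(\cdot)}$ legitimate; this uses the fact that $\mathbf{H}_{i_k}$ annihilates the absorbing coordinate and that the conditional transition probability matrix (\ref{eq:transM}), whose block form is given in (\ref{eq:blockPts}), restricts naturally to $E$.

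The main obstacle is the bookkeeping for the telescoping product and justifying, carefully, the factorisation step conditional on $\phi$. Because $\mathcal{F}_{t-}$ may contain an entire path of $X$ on $(0,t)$, I must verify that conditioning on $\phi$ together with $X_{t}=i$ makes all future increments depend on $\mathcal{F}_{t-}$ only through $X_t$, which in turn rests on the conditional Markov property inherited from $X^{(\phi)}$ and on the fact that the Bayesian update $\mathbf{S}(t)$ already absorbs all information from $\mathcal{F}_{t-}$ into the mixing weight. Once this is cleanly stated, the rest is an unwinding that mirrors the bivariate derivation in Lemma \ref{lem:lemjointCDF}.
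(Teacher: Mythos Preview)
Your proposal is correct and follows essentially the same route as the paper's proof: rewrite each $\{\tau_{i_k}>t_{i_k}\}$ as $\{X_{t_{i_k}}\in\boldsymbol{\Gamma}_{i_k}^c\}$ via stochastic closure, split on $\phi$, use the Markov property of $X^{(\phi)}$ to factor into a product of one-step transitions, collapse the state sums into the projectors $\mathbf{H}_{i_k}=\sum_{J_{i_k}\in\Gamma_{i_k}^c}\mathbf{e}_{J_{i_k}}\mathbf{e}_{J_{i_k}}^{\top}$, and then invoke the block partition (\ref{eq:blockpartisi}) to pass from $e^{\mathbf{G}},e^{\mathbf{Q}}$ to $e^{\mathbf{B}},e^{\mathbf{A}}$. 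Your explicit remark that the Bayesian update $\mathbf{S}(t)$ absorbs all of $\mathcal{F}_{t-}$, so that conditional on $\phi$ the future depends on the past only through $X_t=i$, is exactly the point the paper uses implicitly.
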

\begin{proof}
Following similar arguments of the proof in bivariate case, we obtain
\begin{align}
\mathbb{P}\big\{\tau_1>t_1,...,\tau_n>t_n \big \vert \mathcal{F}_{t,i}\big\}=&\mathbb{P}\big\{\tau_{i_1}>t_{i_1},...,\tau_{i_n}>t_{i_n} \big\vert \mathcal{F}_{t_{i_0},i}\big\} \nonumber \\
&\hspace{-2cm}=\mathbb{P}\big\{X_{t_{i_0}}=j_{i_0},X_{t_{i_1}}\in\boldsymbol{\Gamma}_{i_1}^c,...,X_{t_{i_n}}\in\boldsymbol{\Gamma}_{i_n}^c  \big\vert \mathcal{F}_{t_{i_0},i} \big\} \label{eq:Derivation}\\
&\hspace{-2cm}=\sum_{j_{i_1}\in\boldsymbol{\Gamma}_{i_1}^c}...\sum_{j_{i_n}\in\boldsymbol{\Gamma}_{i_n}^c} \mathbb{P}\big\{X_{t_{i_0}}=j_{i_0},X_{t_{i_1}}=j_{i_1},..., X_{t_{i_n}}=j_{i_n}  \big\vert \mathcal{F}_{t_{i_0},i} \big\}.  \nonumber
\end{align}
By Bayes' theorem for conditional probability and the law of total probability,
\begin{align*}
&\mathbb{P}\big\{X_{t_{i_0}}=j_{i_0},X_{t_{i_1}}=j_{i_1},..., X_{t_{i_n}}=j_{i_n}  \big\vert \mathcal{F}_{t_{i_0},i} \big\}\\
&\hspace{1.5cm}=\mathbb{P}\big\{X_{t_{i_0}}=J_{i_0} \big \vert \mathcal{F}_{t_{i_0},i}\big\}
\times \mathbb{P}\big\{\phi=1 \big\vert X_{t_{i_0}}=J_{i_0},\mathcal{F}_{t_{i_0},i}\big\}\\
&\hspace{3cm}\times \mathbb{P}\big\{X_{t_{i_1}}=J_{i_1} \big\vert \phi=1, X_{t_{i_0}}=J_{i_0},\mathcal{F}_{t_{i_0},i} \big\} \\
&\hspace{3.2cm}\vdots \\
&\hspace{3cm} 
\times \mathbb{P}\big\{X_{t_{i_n}}=J_{i_n} \big\vert \phi=1, X_{t_{i_{n-1}}}=J_{i_{n-1}},\dots,X_{t_{i_0}}=J_{i_0},\mathcal{F}_{t_{i_0},i} \big\} \\
&\hspace{1.5cm}+\mathbb{P}\big\{X_{t_{i_0}}=J_{i_0} \big \vert \mathcal{F}_{t_{i_0},i}\big\}
\times \mathbb{P}\big\{\phi=0 \big\vert X_{t_{i_0}}=J_{i_0},\mathcal{F}_{t_{i_0}}\big\}\\
&\hspace{3cm}\times \mathbb{P}\big\{X_{t_{i_1}}=J_{i_1} \big\vert \phi=0, X_{t_{i_0}}=J_{i_0},\mathcal{F}_{t_{i_0},i} \big\} \\
&\hspace{3.2cm}\vdots \\
&\hspace{3cm} 
\times \mathbb{P}\big\{X_{t_{i_n}}=J_{i_n} \big\vert \phi=0, X_{t_{i_{n-1}}}=J_{i_{n-1}},\dots,X_{t_{i_0}}=J_{i_0},\mathcal{F}_{t_{i_0},i} \big\}.
 \end{align*}
Recall that $\mathbb{P}\big\{X_{t_{i_0}}=J_{i_0}\vert \mathcal{F}_{t_{i_0},i}\big\}=1$ iff $J_{i_0}=i$ and $0$ otherwise. In terms of (\ref{eq:bayesianupdates}),
\begin{align*}
&\hspace{3.5cm}\mathbb{P}\big\{X_{t_{i_0}}=j_{i_0},X_{t_{i_1}}=j_{i_1},..., X_{t_{i_n}}=j_{i_n}  \big\vert \mathcal{F}_{t_{i_0},i} \big\}\\
&\hspace{0cm}=\mathbf{e}_i^{\top}\mathbf{S}(t) e^{\mathbf{G}(t_{i_1}-t_{i_0})} \mathbf{e}_{J_{i_1}} \mathbf{e}_{J_{i_1}}^{\top} e^{\mathbf{G}(t_{i_2}-t_{i_1})} \dots \mathbf{e}_{J_{i_{n-1}}} \mathbf{e}_{J_{i_{n-1}}}^{\top} e^{\mathbf{G}(t_{i_n}-t_{i_{n-1}})} \mathbf{e}_{J_{i_n}}  \mathbf{e}_{J_{i_n}}^{\top}\mathbb{1}\\
&\hspace{0cm}+\mathbf{e}_i^{\top}\big(\mathbf{I}-\mathbf{S}(t) \big) e^{\mathbf{Q}(t_{i_1}-t_{i_0})} \mathbf{e}_{J_{i_1}} \mathbf{e}_{J_{i_1}}^{\top} e^{\mathbf{Q}(t_{i_2}-t_{i_1})} \dots \mathbf{e}_{J_{i_{n-1}}} \mathbf{e}_{J_{i_{n-1}}}^{\top} e^{\mathbf{Q}(t_{i_n}-t_{i_{n-1}})} \mathbf{e}_{J_{i_n}}  \mathbf{e}_{J_{i_n}}^{\top}\mathbb{1}.
\end{align*}
Therefore, starting from equation (\ref{eq:Derivation}) we have following the above that
\begin{align*}
&\mathbb{P}\big\{\tau_{i_1}>t_{i_1},...,\tau_{i_n}>t_{i_n} \big\vert \mathcal{F}_{t_{i_0},i}\big\}\\
&\hspace{1cm}= \mathbf{e}_i^{\top}\mathbf{S}(t) e^{\mathbf{G}(t_{i_1}-t_{i_0})} \Big(\sum_{J_{i_1}\in\Gamma_{i_1}^c} \mathbf{e}_{J_{i_1}} \mathbf{e}_{J_{i_1}}^{\top}\Big) e^{\mathbf{G}(t_{i_2}-t_{i_1})} \\
&\hspace{2cm}\dots \Big(\sum_{J_{i_{n-1}} \in \Gamma_{i_{n-1}}^c} \mathbf{e}_{J_{i_{n-1}}} \mathbf{e}_{J_{i_{n-1}}}^{\top} \Big) e^{\mathbf{G}(t_{i_n}-t_{i_{n-1}})}
\Big(\sum_{J_{i_n}\in \Gamma_{i_n}^c} \mathbf{e}_{J_{i_n}}  \mathbf{e}_{J_{i_n}}^{\top} \Big)\mathbb{1} \\
&\hspace{1cm}+ \mathbf{e}_i^{\top}\big(\mathbf{I}-\mathbf{S}(t) \big) e^{\mathbf{Q}(t_{i_1}-t_{i_0})} \Big(\sum_{J_{i_1}\in\Gamma_{i_1}^c} \mathbf{e}_{J_{i_1}} \mathbf{e}_{J_{i_1}}^{\top}\Big) e^{\mathbf{Q}(t_{i_2}-t_{i_1})} \\
&\hspace{2cm}\dots \Big(\sum_{J_{i_{n-1}} \in \Gamma_{i_{n-1}}^c} \mathbf{e}_{J_{i_{n-1}}} \mathbf{e}_{J_{i_{n-1}}}^{\top} \Big) e^{\mathbf{Q}(t_{i_n}-t_{i_{n-1}})}
\Big(\sum_{J_{i_n}\in \Gamma_{i_n}^c} \mathbf{e}_{J_{i_n}}  \mathbf{e}_{J_{i_n}}^{\top} \Big)\mathbb{1},
\end{align*}
leading to $\overline{F}_{i,t}(t_{i_1},\dots,t_{i_n})$ on account of the fact that $\mathbf{H}_{i_k}=\sum\limits_{J_{i_k}\in\Gamma_{i_k}^c} \mathbf{e}_{J_{i_k}}\mathbf{e}_{J_{i_k}}^{\top}$ and after applying the block partition (\ref{eq:blockpartisi}) for exponential matrices $e^{\mathbf{G}}$ and $e^{\mathbf{Q}}$. \exit
\end{proof}

\medskip

Notice that the distribution forms a non-stationary function of time with the ability to capture heterogeneity and path dependence when conditioning on all previous and current information $\mathcal{F}_{t,i}$. These features are removed when $\mathbf{B}=\mathbf{A}$, in which case, the result reduces to the multivariate phase-type distribution \cite{Assaf1984}.

\begin{prop}
Let $t_{i_n}\geq \dots \geq t_{i_1}\geq t_{i_0}=t\geq 0$ be the time ordering of $(t_1,...,t_n)\in\mathbb{R}_+^n$. The $\mathcal{G}_t-$conditional joint distribution of $\tau_k$ (\ref{eq:MultiPH}) is given by
\begin{equation}\label{eq:mainmph2}
\begin{split}
&\hspace{2.65cm}\overline{F}_{t}(t_{i_1},...,t_{i_n})=\mathbb{P}\big\{\tau_{i_1}>t_{i_1},...,\tau_{i_n}>t_{i_n} \big\vert \mathcal{G}_t\big\}\\
&\hspace{-0.15cm}=\boldsymbol{\pi}^{\top}(t)\Big\{\mathbf{S}(t) \prod_{k=1}^n e^{\mathbf{B}(t_{i_k}-t_{i_{k-1}})}\mathbf{H}_{i_k} + \big[\mathbf{I}-\mathbf{S}(t)\big]  \prod_{k=1}^ne^{\mathbf{A}(t_{i_k}-t_{i_{k-1}})} \mathbf{H}_{i_k}\Big\}\mathbb{1}. 
\end{split}
\end{equation}
\end{prop}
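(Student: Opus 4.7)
The plan is to reduce the $\mathcal{G}_t$-conditional statement to the already-proved $\mathcal{F}_{t,i}$-conditional formula of Lemma \ref{lem:main} by conditioning on the current state of $X$ at time $t$, and then averaging against the Bayesian distribution $\boldsymbol{\pi}(t)$ given by Proposition \ref{prop:propinitialprobab}. This is the same strategy that produced the identity (\ref{eq:relation}) in the univariate case, only applied to the multivariate event $\{\tau_{i_1} > t_{i_1}, \dots, \tau_{i_n} > t_{i_n}\}$.

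First I would use the decomposition $\mathcal{G}_t = \bigcup_{i \in E} \mathcal{F}_{t,i}$, noting that the events $\{X_t = i\}$ for $i \in E$ are disjoint and partition $\{X_t \ne \Delta\}$. For any event $A$ in the $\sigma$-algebra generated by $X_s$ for $s \geq t$, the law of total probability together with Bayes' rule yields
\begin{equation*}
\mathbb{P}\{A \vert \mathcal{G}_t\} \;=\; \sum_{i \in E} \mathbb{P}\{X_t = i \vert \mathcal{G}_t\} \, \mathbb{P}\{A \vert X_t = i, \mathcal{F}_{t-}\} \;=\; \sum_{i \in E} \pi_i(t) \, \mathbb{P}\{A \vert \mathcal{F}_{t,i}\}.
\end{equation*}
Applied to $A = \{\tau_{i_1} > t_{i_1}, \dots, \tau_{i_n} > t_{i_n}\}$, this gives $\overline{F}_t(t_{i_1}, \dots, t_{i_n}) = \sum_{i \in E} \pi_i(t) \, \overline{F}_{i,t}(t_{i_1}, \dots, t_{i_n})$.

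Next I would substitute the closed form (\ref{eq:maincdf1}) from Lemma \ref{lem:main} for each $\overline{F}_{i,t}(t_{i_1}, \dots, t_{i_n})$. Since the matrix product in the braces does not depend on $i$, and $\mathbf{e}_i^\top$ is the only factor that does, linearity pulls the sum inside:
\begin{equation*}
\sum_{i \in E} \pi_i(t) \, \mathbf{e}_i^\top \;=\; \boldsymbol{\pi}^\top(t),
\end{equation*}
where we use that $\pi_\Delta(t) = 0$ by definition so that the sum over $E$ coincides with the sum over $\mathbshape S$. This immediately produces the advertised expression (\ref{eq:mainmph2}).

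I do not anticipate a genuine obstacle: everything rests on the fact that, conditional on $X_t = i$, the future of $X$ depends on the past $\mathcal{F}_{t-}$ only through the switching probabilities $\mathbf{S}(t)$ already captured inside $\overline{F}_{i,t}$, and on the consistency of the Bayesian update $\pi_i(t)$ with the conditioning event $\mathcal{G}_t$. The only place one must be careful is in justifying the step $\mathbb{P}\{A \vert X_t = i, \mathcal{G}_t\} = \mathbb{P}\{A \vert \mathcal{F}_{t,i}\}$, which holds because $\{X_t = i\} \subset \{X_t \ne \Delta\}$ for $i \in E$, so that the additional conditioning on $\mathcal{G}_t$ is redundant once $X_t = i$ is specified together with $\mathcal{F}_{t-}$.
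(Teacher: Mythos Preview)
Your proposal is correct and follows exactly the paper's own approach: the paper's proof is the one-line remark that, by (\ref{eq:relation}), $\overline{F}_{t}(t_{i_1},\dots,t_{i_n})=\sum_{i=1}^m \pi_i(t)\,\overline{F}_{i,t}(t_{i_1},\dots,t_{i_n})$, which is precisely the decomposition you carry out in detail. Your write-up is in fact more careful than the paper's, since you make explicit the partition $\mathcal{G}_t=\bigcup_{i\in E}\mathcal{F}_{t,i}$ and the reason why $\mathbb{P}\{A\mid X_t=i,\mathcal{G}_t\}=\mathbb{P}\{A\mid\mathcal{F}_{t,i}\}$.
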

\begin{proof}
It follows from (\ref{eq:relation}) that $\overline{F}_{t}(t_{i_1},...,t_{i_n})=\sum\limits_{i=1}^m \pi_i(t) \overline{F}_{i,t}(t_{i_1},...,t_{i_n})$. \exit
\end{proof}

\begin{cor}
Set $\mathbf{B}=\mathbf{A}$ and $t=0$ in (\ref{eq:mainmph2}). The joint distribution of $\{\tau_k\}$, 
\begin{equation}\label{eq:main1}
\begin{split}
\mathbb{P}\big\{\tau_{i_1}>t_{i_1},...,\tau_{i_n}>t_{i_n})=&\boldsymbol{\pi}^{\top}\Big(\prod_{k=1}^n e^{\mathbf{A}(t_{i_k}-t_{i_{k-1}})}\mathbf{H}_{k}\Big)\mathbb{1},
\end{split}
\end{equation}
coincides with the unconditional multivariate phase-type distribution \cite{Assaf1984}.
\end{cor}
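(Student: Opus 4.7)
The plan is to verify the corollary by direct substitution into the formula (\ref{eq:mainmph2}) established in the preceding proposition and then check that the resulting expression matches the Assaf--Langberg--Savits--Shaked form (\ref{eq:MPH}) quoted in the introduction.

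First, I would specialize the phase generators. Setting $\mathbf{B} = \mathbf{A}$ means that the two underlying Markov jump processes $X^{(0)}$ and $X^{(1)}$ move with the same intensity matrix, so the exponentials $e^{\mathbf{B}(t_{i_k} - t_{i_{k-1}})}$ and $e^{\mathbf{A}(t_{i_k} - t_{i_{k-1}})}$ coincide for every $k$. Consequently the two matrix products in the brace of (\ref{eq:mainmph2}) are identical, and one may factor out $\prod_{k=1}^{n} e^{\mathbf{A}(t_{i_k} - t_{i_{k-1}})} \mathbf{H}_{i_k}$ from the left. The coefficient matrix then collapses via
\begin{equation*}
\mathbf{S}(t) + \bigl[\mathbf{I} - \mathbf{S}(t)\bigr] = \mathbf{I},
\end{equation*}
so the mixing weights disappear entirely, as expected: when both regimes are driven by the same generator, the Bayesian update on $\phi$ is irrelevant.

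Next, I would take the limit $t \to 0$ (equivalently, set $t = t_{i_0} = 0$). At $t = 0$ we have $\mathcal{G}_0 = \{X_0 \neq \Delta\}$, and Proposition \ref{prop:propinitialprobab} combined with the assumption $\pi_{\Delta} = 0$ gives $\boldsymbol{\pi}(0) = \boldsymbol{\pi}$. Substituting this together with $t_{i_0} = 0$ into the simplified expression yields exactly
\begin{equation*}
\boldsymbol{\pi}^{\top} \Bigl( \prod_{k=1}^{n} e^{\mathbf{A}(t_{i_k} - t_{i_{k-1}})} \mathbf{H}_{i_k} \Bigr) \mathbb{1},
\end{equation*}
which is precisely the multivariate phase-type distribution (\ref{eq:MPH}) of Assaf et al.\ \cite{Assaf1984}. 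A final sentence would remark that the ordering convention $t_{i_n} \geq \cdots \geq t_{i_1} \geq t_{i_0} = 0$ used here agrees with the convention adopted in the introduction, so the two formulas coincide term by term.

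There is essentially no obstacle: the whole argument is a two-line simplification of (\ref{eq:mainmph2}) and does not require any new estimate or limiting procedure beyond continuity of $\boldsymbol{\pi}(t)$ and $\mathbf{S}(t)$ at $t = 0$, both of which are immediate from their definitions and from Proposition \ref{prop:lem1}. The only subtlety worth flagging is the consistency check on the initial distribution, namely that under our standing assumption $\pi_{\Delta} = 0$ the conditioning event $\{X_0 \neq \Delta\}$ has probability one, so $\boldsymbol{\pi}(0) = \boldsymbol{\pi}$ without any renormalization.
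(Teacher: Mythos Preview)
Your proposal is correct and matches the paper's intended reasoning: the paper states the corollary without any proof, treating it as an immediate consequence of substituting $\mathbf{B}=\mathbf{A}$ and $t=0$ into (\ref{eq:mainmph2}), which is precisely the direct simplification you carry out. Your added observations that $\mathbf{S}(t)+[\mathbf{I}-\mathbf{S}(t)]=\mathbf{I}$ and $\boldsymbol{\pi}(0)=\boldsymbol{\pi}$ (since $\pi_{\Delta}=0$) are exactly the two points needed, and nothing further is required.
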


The absolutely continuous component of the distribution $\overline{F}_{i,t}\big(t_{i_1},\dots,t_{i_n}\big)$ (resp. $\overline{F}_{t}\big(t_{i_1},\dots,t_{i_n}\big)$) has a density given by the following theorem.
\begin{theo}
Let $t_{i_n}\geq \dots \geq t_{i_1}\geq t_{i_0}=t\geq 0$ be the time ordering of $(t_1,...,t_n)\in\mathbb{R}_+^n$. The conditional joint density function of $\tau_k$ (\ref{eq:MultiPH}) is given by 
\begin{align*}
f_{i,t}\big(t_{i_1},\dots,t_{i_n}\big)=&(-1)^n\mathbf{e}_i^{\top}\Big\{\mathbf{S}(t)\Big(\prod_{k=1}^{n-1} e^{\mathbf{B}(t_k-t_{k-1})}[\mathbf{B},\mathbf{H}_{i_k}] \Big)e^{\mathbf{B}(t_n-t_{n-1})}\mathbf{B}\mathbf{H}_{i_n} \\
&+\big(\mathbf{I}-\mathbf{S}(t)\big)\Big(\prod_{k=1}^{n-1} e^{\mathbf{A}(t_k-t_{k-1})}[\mathbf{A},\mathbf{H}_{i_k}] \Big)e^{\mathbf{A}(t_n-t_{n-1})}\mathbf{A}\mathbf{H}_{i_n}\Big\}\mathbb{1}, \\[8pt]
f_{t}\big(t_{i_1},\dots,t_{i_n}\big)=&(-1)^n\boldsymbol{\pi}^{\top}(t)\Big\{\mathbf{S}(t)\Big(\prod_{k=1}^{n-1} e^{\mathbf{B}(t_k-t_{k-1})}[\mathbf{B},\mathbf{H}_{i_k}] \Big)e^{\mathbf{B}(t_n-t_{n-1})}\mathbf{B}\mathbf{H}_{i_n} \\
&+\big(\mathbf{I}-\mathbf{S}(t)\big)\Big(\prod_{k=1}^{n-1} e^{\mathbf{A}(t_k-t_{k-1})}[\mathbf{A},\mathbf{H}_{i_k}] \Big)e^{\mathbf{A}(t_n-t_{n-1})}\mathbf{A}\mathbf{H}_{i_n}\Big\}\mathbb{1}. 
\end{align*}
\end{theo}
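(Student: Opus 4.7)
The plan is to obtain $f_{i,t}$ by direct mixed partial differentiation of the joint survival function given in Lemma \ref{lem:main}, using the standard density–survival relation
\[
f_{i,t}(t_{i_1},\dots,t_{i_n}) \;=\; (-1)^n \frac{\partial^n \overline{F}_{i,t}}{\partial t_1\cdots \partial t_n}(t_{i_1},\dots,t_{i_n}),
\]
and then to read off $f_t$ from $f_{i,t}$ via the total probability identity (\ref{eq:relation}), which yields $f_t = \sum_{i\in E}\pi_i(t)\, f_{i,t}$. Since $(i_1,\dots,i_n)$ is a permutation of $(1,\dots,n)$, differentiating in the $t_{i_k}$ order is equivalent to differentiating in the natural order.

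The core observation is structural: inside the bracketed expression for $\overline{F}_{i,t}$, each intermediate time $t_{i_k}$ with $k\le n-1$ appears in exactly two adjacent exponential factors, with opposite signs, namely with $+$ in $e^{\mathbf{B}(t_{i_k}-t_{i_{k-1}})}$ and with $-$ in $e^{\mathbf{B}(t_{i_{k+1}}-t_{i_k})}$. Applying $\partial/\partial t_{i_k}$ and using the commutation of $\mathbf{B}$ with its own exponential (identity (\ref{eq:dervexpm})), the two contributions combine as
\[
\frac{\partial}{\partial t_{i_k}}\!\Big[e^{\mathbf{B}(t_{i_k}-t_{i_{k-1}})}\mathbf{H}_{i_k}e^{\mathbf{B}(t_{i_{k+1}}-t_{i_k})}\Big] \;=\; e^{\mathbf{B}(t_{i_k}-t_{i_{k-1}})}\,[\mathbf{B},\mathbf{H}_{i_k}]\,e^{\mathbf{B}(t_{i_{k+1}}-t_{i_k})},
\]
so that $\mathbf{H}_{i_k}$ is effectively replaced by the commutator $[\mathbf{B},\mathbf{H}_{i_k}]$. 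The largest time $t_{i_n}$ occurs only in $e^{\mathbf{B}(t_{i_n}-t_{i_{n-1}})}$, and differentiation of this tail factor replaces the final $\mathbf{H}_{i_n}$ by $\mathbf{B}\mathbf{H}_{i_n}$. An identical computation is performed on the second branch weighted by $\mathbf{I}-\mathbf{S}(t)$, with $\mathbf{B}$ replaced by $\mathbf{A}$.

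Because mixed partials commute, the $n$ substitutions above can be executed in any order, say sequentially in $k=1,2,\dots,n$, producing the product form
\[
\mathbf{S}(t)\Big(\prod_{k=1}^{n-1}e^{\mathbf{B}(t_{i_k}-t_{i_{k-1}})}[\mathbf{B},\mathbf{H}_{i_k}]\Big)e^{\mathbf{B}(t_{i_n}-t_{i_{n-1}})}\mathbf{B}\mathbf{H}_{i_n}
\]
plus the analogous $\mathbf{A}$-term. Multiplication by the prefactor $(-1)^n$ then delivers the claimed expression for $f_{i,t}$. The formula for $f_t$ is obtained at once by averaging against $\pi_i(t)$.

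The substantive content is concentrated in the commutator observation above; the rest is matrix bookkeeping. The one point requiring care is that in the non-commutative matrix setting $\mathbf{B}$ commutes with $e^{\mathbf{B}(s-t)}$ but in general not with $\mathbf{H}_{i_k}$, which is precisely why the two derivative contributions at an interior time $t_{i_k}$ assemble into the commutator $[\mathbf{B},\mathbf{H}_{i_k}]$ rather than cancelling. This is a direct generalisation of the bivariate calculation already carried out in the proof of Theorem \ref{theo:maintheo}; I expect no new obstacle beyond disciplined indexing through the permutation $(i_1,\dots,i_n)$.
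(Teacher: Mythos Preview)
Your proposal is correct and follows essentially the same route as the paper: both obtain $f_{i,t}$ by taking the $n$-fold mixed partial derivative of $\overline{F}_{i,t}$ from Lemma~\ref{lem:main}, using the identity $\frac{\partial}{\partial t_{i_k}}\big[e^{\mathbf{B}(t_{i_k}-t_{i_{k-1}})}\mathbf{H}_{i_k}e^{\mathbf{B}(t_{i_{k+1}}-t_{i_k})}\big]=e^{\mathbf{B}(t_{i_k}-t_{i_{k-1}})}[\mathbf{B},\mathbf{H}_{i_k}]e^{\mathbf{B}(t_{i_{k+1}}-t_{i_k})}$ at interior times and a single $\mathbf{B}$ at the last factor, and then pass to $f_t$ via (\ref{eq:relation}). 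The only cosmetic difference is that the paper frames the successive differentiations as an induction in $k$, whereas you invoke commutativity of mixed partials to perform the substitutions in any order; the computations are identical.
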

\begin{proof}
The proof follows from taking partial derivative to $F_{i,t}\big(t_{i_1},\dots,t_{i_n}\big)$:
\begin{align*}
f_{i,t}\big(t_{i_1},\dots,t_{i_n}\big)=&(-1)^n \frac{\partial^n \overline{F}_{i,t}}{\partial t_{i_n}\dots \partial t_{i_1}}\big(t_{i_1},\dots,t_{i_n}\big).
\end{align*}
To justify the claim, we use induction argument. For this purpose, recall that 
\begin{align}\label{eq:perkalian}
\prod_{k=1}^n e^{\mathbf{B}(t_{i_k}-t_{i_{k-1}})}\mathbf{H}_{i_k}=e^{\mathbf{B}(t_{i_1}-t_{i_{0}})}\mathbf{H}_{i_1}e^{\mathbf{B}(t_{i_2}-t_{i_{1}})}\mathbf{H}_{i_2}\prod_{k=3}^n e^{\mathbf{B}(t_{i_k}-t_{i_{k-1}})}\mathbf{H}_{i_k}.
\end{align}

\pagebreak 

\noindent Hence, by (\ref{eq:dervexpm}) and applying integration by part as we did before, we have 
\begin{align*}
\frac{\partial}{\partial t_{i_1}}\prod_{k=1}^n e^{\mathbf{B}(t_{i_k}-t_{i_{k-1}})}\mathbf{H}_{i_k}=&
e^{\mathbf{B}(t_{i_1}-t_{i_{0}})}[\mathbf{B},\mathbf{H}_{i_1}]\prod_{k=2}^n e^{\mathbf{B}(t_{i_k}-t_{i_{k-1}})}\mathbf{H}_{i_k},
\end{align*}
from which the second order partial derivative $\frac{\partial^2}{\partial t_{i_2}\partial t_{i_1}}$ of (\ref{eq:perkalian}) is given by
\begin{align*}
\frac{\partial^2}{\partial t_{i_2}\partial t_{i_1}}\prod_{k=1}^n e^{\mathbf{B}(t_{i_k}-t_{i_{k-1}})}\mathbf{H}_{i_k}=&
e^{\mathbf{B}(t_{i_1}-t_{i_{0}})}[\mathbf{B},\mathbf{H}_{i_1}]\frac{\partial}{\partial t_{i_2}}\Big(\prod_{k=2}^n e^{\mathbf{B}(t_{i_k}-t_{i_{k-1}})}\mathbf{H}_{i_k}\Big)\\
&\hspace{-2.5cm}=e^{\mathbf{B}(t_{i_1}-t_{i_{0}})}[\mathbf{B},\mathbf{H}_{i_1}] e^{\mathbf{B}(t_{i_2}-t_{i_{1}})}[\mathbf{B},\mathbf{H}_{i_2}] \Big(\prod_{k=3}^n e^{\mathbf{B}(t_{i_k}-t_{i_{k-1}})}\mathbf{H}_{i_k}\Big).
\end{align*}
After $(n-1)$steps of taking the partial derivative, one can show that 
\begin{align*}
\frac{\partial^{n-1}}{\partial t_{i_{n-1}}\dots\partial t_{i_1}}\prod_{k=1}^n e^{\mathbf{B}(t_{i_k}-t_{i_{k-1}})}\mathbf{H}_{i_k} =\Big(\prod_{k=1}^{n-1} e^{\mathbf{B}(t_{i_k}-t_{i_{k-1}})}[\mathbf{B},\mathbf{H}_{i_k}]\Big) e^{\mathbf{B}(t_{i_n}-t_{i_{n-1}})} \mathbf{H}_{i_n}.
\end{align*}
The claim is established on account of (\ref{eq:dervexpm}) and the fact that 
\begin{align*}
\frac{\partial^n \overline{F}_{i,t}}{\partial t_{i_n}\dots \partial t_{i_1}} (t_{i_1},\dots,t_{i_n})=&
\mathbf{e}_i^{\top}\Big\{\mathbf{S}(t)\frac{\partial^n}{\partial t_{i_n}\dots \partial t_{i_1}} \Big(\prod_{k=1}^n e^{\mathbf{B}(t_{i_k}-t_{i_{k-1}})}\mathbf{H}_{i_k} \Big) \\
&\hspace{-1cm}+ \big[\mathbf{I}-\mathbf{S}(t)\big]  
\frac{\partial^n}{\partial t_{i_n}\dots \partial t_{i_1}} \Big(\prod_{k=1}^ne^{\mathbf{A}(t_{i_k}-t_{i_{k-1}})} \mathbf{H}_{i_k}\Big)\Big\}\mathbb{1}.
\end{align*}
The proof for the $\mathcal{G}_t-$conditional joint density $f_t(t_{i_1},\dots,t_{i_n})$ follows from 
\begin{align*}
f_t(t_{i_1},\dots,t_{i_n})=\sum_{i=1}^m \pi_i(t) \frac{\partial^n}{\partial t_{i_n}\dots \partial t_{i_1}} \overline{F}_{i,t}(t_{i_1},\dots,t_{i_n}). \exit
\end{align*}

\noindent However, due to complexity of the joint distributions, the singular component of $\overline{F}_{i,t}(t_{i_1},\dots,t_{i_n})$ (resp. $\overline{F}_{t}(t_{i_1},\dots,t_{i_n})$) is more complicated to get in closed form. 
\end{proof}

\medskip

Following (\ref{eq:maincdf1}) and (\ref{eq:mainmph2}), we see that the distributions are presented in terms of a generalized mixture of the multivariate phase-type distributions \cite{Assaf1984}. 

They are uniquely characterized by the probability $\boldsymbol{\pi}$ of starting the mixture process $X$ (\ref{eq:mixture}) on the state space $\mathbb{S}$, the speed of the process, which is represented by the phase generator matrices $\mathbf{B}$ and $\mathbf{A}$, and by the switching probability matrix $\mathbf{S}$. They coincide with \cite{Assaf1984} when the process never repeatedly changes the speed, i.e., $\mathbf{B}=\mathbf{A}$ and when sending $t$ to zero. As in the univariate case, the multivariate distributions have closure and dense properties, which can be established in similar ways to the univariate analogs using matrix analytic approach \cite{Assaf1984}. We refer among others to \cite{Neuts1981}, \cite{Assaf1982}, \cite{He} and \cite{Rolski} for the Markov model, and to \cite{Surya2018} for the mixture model. As a result, we have the following.

\begin{theo}[Closure and dense properties]
The multivariate probability distribution (\ref{eq:mainmph2}) forms a dense class of distributions on $\mathbb{R}_+^n$, which is closed under finite convex mixtures and finite convolutions.
\end{theo}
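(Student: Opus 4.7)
The plan is to observe that the class of distributions (\ref{eq:mainmph2}) coincides with the unconditional multivariate phase-type class of Assaf et al.\ \cite{Assaf1984}, so that the three properties are inherited from the latter, where they are established via the matrix analytic approach of Neuts \cite{Neuts1981}. This reduces the task to two verifications (mutual inclusion of the classes) plus invocation of the known Assaf properties.

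Writing $\boldsymbol{\alpha}:=\mathbf{S}(t)\boldsymbol{\pi}(t)$ and $\boldsymbol{\beta}:=[\mathbf{I}-\mathbf{S}(t)]\boldsymbol{\pi}(t)$, we have
\begin{align*}
\overline{F}_{t}(t_{i_1},\dots,t_{i_n}) = \boldsymbol{\alpha}^{\top} \prod_{k=1}^n e^{\mathbf{B}(t_{i_k}-t_{i_{k-1}})}\mathbf{H}_{i_k}\mathbb{1} + \boldsymbol{\beta}^{\top} \prod_{k=1}^n e^{\mathbf{A}(t_{i_k}-t_{i_{k-1}})}\mathbf{H}_{i_k}\mathbb{1},
\end{align*}
with $\boldsymbol{\alpha}^{\top}\mathbb{1}+\boldsymbol{\beta}^{\top}\mathbb{1}=1$. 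Via the block-diagonal construction $\widetilde{\mathbf{A}}=\mathrm{diag}(\mathbf{B},\mathbf{A})$, $\widetilde{\boldsymbol{\pi}}=(\boldsymbol{\alpha}^{\top},\boldsymbol{\beta}^{\top})^{\top}$, $\widetilde{\mathbf{H}}_k=\mathrm{diag}(\mathbf{H}_k,\mathbf{H}_k)$, and enlarged stochastically closed sets $\widetilde{\boldsymbol{\Gamma}}_k=\boldsymbol{\Gamma}_k\sqcup \boldsymbol{\Gamma}_k$, this convex combination is itself an Assaf multivariate phase-type distribution; thus our class is contained in the Assaf class. Conversely, any Assaf distribution is trivially recovered by specialising $\mathbf{B}=\mathbf{A}$ and $t=0$ in (\ref{eq:mainmph2}), so the two classes coincide.

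Density, closure under finite convex mixtures and closure under finite convolutions then follow directly from the corresponding (known) properties of the Assaf class \cite{Assaf1984}. Concretely, closure under finite convex mixtures follows from the same block-diagonal construction applied simultaneously to generators, initial vectors and matrices $\mathbf{H}_k$; density follows from the density of the Assaf class in distributions on $\mathbb{R}_+^n$. The main obstacle is closure under finite convolutions: this relies on the standard Neuts concatenation, in which independent absorbing Markov chains are joined on the disjoint union of their state spaces with block upper-triangular generators engineered so that full absorption of one chain into $\{\Delta^{(\ell)}\}$ triggers a re-start of the next from its initial distribution $\boldsymbol{\pi}^{(\ell+1)}$, with extended closed sets $\widetilde{\boldsymbol{\Gamma}}_k=\bigcup_\ell \boldsymbol{\Gamma}_k^{(\ell)}$ chosen so that the first-passage times in the combined chain reproduce $\sum_\ell \tau_k^{(\ell)}$ coordinatewise. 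The delicate point is the last matching (the off-diagonal re-start blocks must be tuned so that the cause-specific first-passage times add rather than interleave), after which iteration over $r$ stages gives $r$-fold convolutional closure for arbitrary finite $r$.
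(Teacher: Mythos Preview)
Your reduction is correct and is a cleaner route than what the paper indicates. The paper does not actually prove this theorem: it only remarks that the result ``can be established in similar ways to the univariate analogs using matrix analytic approach'' and points to \cite{Neuts1981}, \cite{Assaf1984}, \cite{Assaf1982}, \cite{He}, \cite{Rolski} and, for the univariate mixture case, \cite{Surya2018}. The implied programme is to redo the block constructions directly for the mixture data $(\boldsymbol{\pi}(t),\mathbf{S}(t),\mathbf{A},\mathbf{B},\{\mathbf{H}_k\})$. Your observation that the class (\ref{eq:mainmph2}) \emph{coincides} with the Assaf MPH class---via the block-diagonal embedding $\widetilde{\mathbf{A}}=\mathrm{diag}(\mathbf{B},\mathbf{A})$, $\widetilde{\boldsymbol{\pi}}=(\boldsymbol{\alpha}^{\top},\boldsymbol{\beta}^{\top})^{\top}$, $\widetilde{\mathbf{H}}_k=\mathrm{diag}(\mathbf{H}_k,\mathbf{H}_k)$ in one direction, and the specialisation $\mathbf{B}=\mathbf{A}$, $t=0$ in the other---short-circuits all of this: once the classes agree, density and both closure properties are inherited wholesale from the classical MPH results, with nothing to re-prove. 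This buys you a two-line argument in place of three separate constructions.

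One caution on your convolution sketch. With the sequential concatenation you describe (restart chain $\ell+1$ upon absorption of chain $\ell$ into $\Delta^{(\ell)}$) and closed sets $\widetilde{\boldsymbol{\Gamma}}_k=\bigcup_\ell \boldsymbol{\Gamma}_k^{(\ell)}$, the first passage of the combined chain into $\widetilde{\boldsymbol{\Gamma}}_k$ is $\tau_k^{(1)}$, not $\sum_\ell \tau_k^{(\ell)}$, because $\boldsymbol{\Gamma}_k^{(1)}$ is reached before $\Delta^{(1)}$. This is not repaired by ``tuning the off-diagonal re-start blocks''; the defect is in the choice of closed sets, not in the transition rates. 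Since your actual argument is the class identification followed by citation of the known MPH closure results, this sketch is inessential to the proof, and I would simply omit it rather than leave an incorrect construction in the write-up.
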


\section{Some explicit examples}
This section discusses some explicit examples of the main results presented in Section \ref{sec:mainsection}, particularly on the bivariate distributions. Using the closed form density functions (\ref{eq:jointpdfit}) and (\ref{eq:jointpdft}), we discuss the mixtures of exponential distributions, Marshall-Olkin exponential distributions, and their generalization. 

\begin{Ex}[Mixture of exponential distribution]

Consider the mixture process $X$ (\ref{eq:mixture}) defined on the state space $\mathbb{S}=\{1,2,3\}\cup \{\Delta\}$ with stochastically closed sets $\boldsymbol{\Gamma}_1=\{2,\Delta\}$ and $\boldsymbol{\Gamma}_2=\{3,\Delta\}$. Assume that the speed of the mixture process is represented by the following phase generator matrices:
\begin{equation*}
\mathbf{B} = \left(\begin{array}{ccc}
  -(b_1+b_2) & b_1 & b_2 \\
  0 & -b_2 & 0 \\
  0 & 0 & -b_1
\end{array}\right)
\quad \textrm{and} \quad
\mathbf{A} = \left(\begin{array}{ccc}
  -(a_1+a_2) & a_1 & a_2 \\
  0 & -a_2 & 0 \\
  0 & 0 & -a_1
\end{array}\right).
\end{equation*}
It is straightforward to derive from the state space representation that
\begin{align*}
\mathbf{H}_1= \left(\begin{array}{ccc}
 1 & 0 & 0 \\
  0 & 0 & 0 \\
   0 & 0  & 1
\end{array}\right)
\quad \textrm{and} \quad 
\mathbf{H}_2= \left(\begin{array}{ccc}
 1 & 0 & 0 \\
  0 & 1 & 0 \\
   0 & 0  & 0
\end{array}\right).
\end{align*}
After some calculations, the matrices $[\mathbf{A},\mathbf{H}_k]$ and $\mathbf{A}\mathbf{H}_k$, $k=1,2$, are given by
\begin{align*}
[\mathbf{A},\mathbf{H}_1]=
\left(\begin{array}{ccc}
 0 & -a_1 & 0 \\
  0 & 0 & 0 \\
   0 & 0  & 0
\end{array}\right)
\quad \textrm{and}& \quad 
[\mathbf{A},\mathbf{H}_2]=
\left(\begin{array}{ccc}
 0 & 0 & -a_2 \\
  0 & 0 & 0 \\
   0 & 0  & 0
\end{array}\right) \\[8pt]
\mathbf{A}\mathbf{H}_1=
\left(\begin{array}{ccc}
 -(a_1+a_2) & 0 & a_2 \\
  0 & 0 & 0 \\
   0 & 0  & -a_1
\end{array}\right)
\quad \textrm{and}& \quad
\mathbf{A}\mathbf{H}_2=
\left(\begin{array}{ccc}
 -(a_1+a_2) & a_1 & 0 \\
  0 & -a_2 & 0 \\
   0 & 0  & 0
\end{array}\right).
\end{align*}
Similarly defined for $[\mathbf{B},\mathbf{H}_k]$ and $\mathbf{B}\mathbf{H}_k$, for $k=1,2$.
Set the matrix $\mathbf{S}=\textrm{diag}(p_1,p_2,p_3)$, with $0<p_k<1$, for $k=1,2,3$, whilst the initial probability $\boldsymbol{\pi}$ has mass one on the state $1$, i.e., $\boldsymbol{\pi}=\mathbf{e}_1$. It is straightforward to check that the condition (\ref{eq:singularcond}) is clearly satisfied implying that the joint density function (\ref{eq:jointpdfit}) has zero singular component. Hence, following (\ref{eq:jointpdfit}) we have
\begin{align*}
f_{\tau_1,\tau_2}(t_1,t_2)=p_1 b_1 e^{-b_1 t_1}b_2 e^{-b_2 t_2} + (1-p_1)a_1 e^{-a_1 t_1}a_2 e^{-a_2 t_2},
\end{align*}
for $t_1,t_2\geq 0$. The marginal distribution of $\tau_1$and $\tau$ are given respectively by
\begin{align*}
f_{\tau_1}(t_1)=&p_1 b_1 e^{-b_1 t_1} + (1-p_1)a_1 e^{-a_1 t_1}\\
f_{\tau_2}(t_2)=&p_1 b_2 e^{-b_2 t_2} + (1-p_1) a_2 e^{-a_2 t_2}.
\end{align*}

Hence, clearly, as $f_{\tau_1,\tau_2}(t_1,t_2)\neq f_{\tau_1}(t_1)f_{\tau_2}(t_2)$, it follows that the exit times $\tau_1$ and $\tau_2$ are not independent under the mixture model. They are independent if and only if $a_1=b_1=b_2=a_2$, in which case the mixture corresponds to a simple Markov jump process. See the example on p. 691 in \cite{Assaf1984} and p. 59 in \cite{He}. 

Furthermore, when conditioning on the information set $\mathcal{F}_{t,i}$ with $i=1$, the conditional joint density function $f_{1,t}(t_1,t_2)$ is given for $t_1,t_2\geq t\geq 0$  by  
\begin{equation}\label{eq:ex1}
\begin{split}
f_{1,t}(t_1,t_2)=&s_1(t) e^{(b_1+b_2)t} b_1 e^{-b_1 t_1} b_2 e^{-b_2t_2} \\
&+ (1-s_1(t))  e^{(a_1+a_2)t} a_1 e^{-a_1 t_1} a_2 e^{-a_2t_2},
\end{split}
\end{equation}
where the switching probability $s_1(t)$ is defined for all $t\geq 0$ by
\begin{align*}
s_1(t)=\frac{p_1e^{-(b_1+b_2)t}}{p_1e^{-(b_1+b_2)t} + (1-p_1)e^{-(a_1+a_2)t}}.
\end{align*}
Observe that, on the event $\{\textrm{min}\{\tau_1,\tau_2\}>t\}$, one can check that $s_1(t)\rightarrow 0$ (resp. $1$) as $t\rightarrow \infty$ if $b_1+b_2 > \; (\textrm{resp. $<$})\; a_1+a_2$, implying that the mixture process moves at the slower speed $\mathbf{A}$ (resp. $\mathbf{B}$) in the long run as a Markov process.  

Given that $\Gamma_1^c\cap\Gamma_2^c=\{1\}$, we have $\pi_1(t)=1$ for all $t\geq 0$. Hence, the density function $f_t(t_1,t_2)$ (\ref{eq:jointpdft}) has therefore the same expression as (\ref{eq:ex1}).
\end{Ex}

\begin{Ex}[Mixture of Marshall-Olkin distribution]
Consider a mixture process $X$ (\ref{eq:mixture}) with the same state space $\mathbb{S}$ and stochastically closed sets $\Gamma_1$ and $\Gamma_2$ as defined above. Let the speed of the mixture process be given by
\begin{align*}
\mathbf{B} =& \left(\begin{array}{ccc}
  -(b_1+b_2+b_3) & b_1 & b_2 \\
  0 & -(b_2+b_3) & 0 \\
  0 & 0 & -(b_1+b_3)
\end{array}\right)\\[8pt]
\mathbf{A} =& \left(\begin{array}{ccc}
  -(a_1+a_2+a_3) & a_1 & a_2 \\
  0 & -(a_2+a_3) & 0 \\
  0 & 0 & -(a_1+a_3)
\end{array}\right).
\end{align*}
Set the matrix $\mathbf{S}=\textrm{diag}(p_1,p_2,p_3)$, with $0<p_k<1$, for $k=1,2,3$, and whilst the initial distribution has mass one on the state $1$, i.e., $\pi_1=1$. Following (\ref{eq:singularcond}), the joint density $f_{1,t}(t_1,t_2)$ has singular part on the set $\{(t_1,t_2):t_2=t_1\}$. By the same approach as above, we have following Theorem \ref{theo:maintheo} and Corollary \ref{cor:jointCDFit}:
\begin{align*}
f_{1,t}^{(1)}(t_1,t_2)=&s_1(t)b_2(b_1+b_3)e^{-b_1(t_1-t)} e^{-b_2(t_2-t)} e^{-b_3(t_1-t_2)}
\\&+ \big(1-s_1(t)\big)a_2(a_1+a_3)e^{-a_1(t_1-t)} e^{-a_2(t_2-t)} e^{-a_3(t_1-t_2)}\\[8pt]
f_{1,t}^{(2)}(t_1,t_2)=&s_1(t)b_1(b_2+b_3)e^{-b_1(t_1-t)}e^{-b_2(t_2-t)} e^{-b_3(t_2-t_1)}\\
&+ \big(1-s_1(t)\big)a_1(a_2+a_3)e^{-a_1(t_1-t)}e^{-a_2(t_2-t)} e^{-a_3(t_2-t_1)}\\[8pt]
f_{1,t}^{(0)}(t_1,t_1)=&s_1(t) b_3 e^{-(b_1+b_2+b_3)(t_1-t)} 
+ \big(1-s_1(t) \big) a_3 e^{-(a_1+a_2+a_3)(t_1-t)},
\end{align*}
whereas the switching probability $s_1(t)$ is given for all $t\geq 0$ by
\begin{align*}
s_1(t)=\frac{p_1e^{-(b_1+b_2+b_3)t}}{p_1e^{-(b_1+b_2+b_3)t} + (1-p_1)e^{-(a_1+a_2+a_3)t}}.
\end{align*}
\end{Ex}

In order to take advantage to the structure of the generator matrices, let
\begin{equation}\label{eq:intensityex}
\mathbf{B} = \left(\begin{array}{ccc}
 \mathbf{B}_{11} & \mathbf{B}_{12} & \mathbf{B}_{13} \\
  \mathbf{0} & \mathbf{B}_{22} & \mathbf{0} \\
   \mathbf{0} & \mathbf{0}  & \mathbf{B}_{33}\
\end{array}\right) 
\quad \textrm{and} \quad 
\mathbf{A} = \left(\begin{array}{ccc}
 \mathbf{A}_{11} & \mathbf{A}_{12} & \mathbf{A}_{13} \\
  \mathbf{0} & \mathbf{A}_{22} & \mathbf{0} \\
   \mathbf{0} & \mathbf{0}  & \mathbf{A}_{33}
\end{array}\right).
\end{equation}
The generator matrices $\mathbf{A}$ and $\mathbf{B}$ are nonsingular if and only if $\mathbf{A}_{11}$, $\mathbf{A}_{22}$, $\mathbf{A}_{33}$, $\mathbf{B}_{11}$, $\mathbf{B}_{22}$ and $\mathbf{B}_{33}$ are all nonsingular. The matrices $\mathbf{H}_1$ and $\mathbf{H}_2$ are given by
\begin{align*}
\mathbf{H}_1= \left(\begin{array}{ccc}
 \mathbf{I} & \mathbf{0} & \mathbf{0} \\
  \mathbf{0} & \mathbf{0} & \mathbf{0} \\
   \mathbf{0} & \mathbf{0}  & \mathbf{I}
\end{array}\right)
\quad \textrm{and} \quad 
\mathbf{H}_2= \left(\begin{array}{ccc}
 \mathbf{I} & \mathbf{0} & \mathbf{0} \\
  \mathbf{0} & \mathbf{I} & \mathbf{0} \\
   \mathbf{0} & \mathbf{0}  & \mathbf{0}
\end{array}\right).
\end{align*}
After some calculations the matrix $[\mathbf{A},\mathbf{H}_k]$ and $\mathbf{A}\mathbf{H}_k$, $k=1,2$, are given by
\begin{align*}
[\mathbf{A},\mathbf{H}_1]=&
\left(\begin{array}{ccc}
 \mathbf{0} & -\mathbf{A}_{12} & \mathbf{0} \\
  \mathbf{0} & \mathbf{0} & \mathbf{0} \\
   \mathbf{0} & \mathbf{0}  & \mathbf{0}
\end{array}\right)
\quad \textrm{and} \quad 
[\mathbf{A},\mathbf{H}_2]=
\left(\begin{array}{ccc}
 \mathbf{0} & \mathbf{0} & -\mathbf{A}_{13} \\
  \mathbf{0} & \mathbf{0} & \mathbf{0} \\
   \mathbf{0} & \mathbf{0}  & \mathbf{0}
\end{array}\right) \\[8pt]
\mathbf{A}\mathbf{H}_1=&
\left(\begin{array}{ccc}
 \mathbf{A}_{11} & \mathbf{0} & \mathbf{A}_{13} \\
  \mathbf{0} & \mathbf{0} & \mathbf{0} \\
   \mathbf{0} & \mathbf{0}  & \mathbf{A}_{33}
\end{array}\right)
\quad \textrm{and} \quad
\mathbf{A}\mathbf{H}_2=
\left(\begin{array}{ccc}
 \mathbf{A}_{11} & \mathbf{A}_{12} & \mathbf{0} \\
  \mathbf{0} & \mathbf{A}_{22} & \mathbf{0} \\
   \mathbf{0} & \mathbf{0}  & \mathbf{0}
\end{array}\right).
\end{align*}
Similarly defined for $[\mathbf{B},\mathbf{H}_k]$ and $\mathbf{B}\mathbf{H}_k$, for $k=1,2$. A rather long calculations using infinite series representation of exponential matrix shows following (\ref{eq:jointpdfit}),
\begin{eqnarray} \label{eq:jointpdfitex}
f_{i,t}(t_1,t_2)=
\begin{cases}
f_{i,t}^{(1)}(t_1,t_2), &\; \textrm{if $t_1\geq t_2\geq t \geq 0$} \\[8pt]
f_{i,t}^{(2)}(t_1,t_2), &\; \textrm{if $t_2\geq t_1\geq t \geq 0$}\\[8pt]
f_{i,t}^{(0)}(t_1,t_1), &\; \textrm{if $t_1= t_2\geq t \geq 0$}, \\
\end{cases}
\quad \textrm{for $i\in \Gamma_1^c \cap \Gamma_2^c$}.
\end{eqnarray}
where the absolutely continuous parts $f_{i,t}^{(1)}(t_1,t_2)$ and $f_{i,t}^{(2)}(t_1,t_2)$ are given by
 \begin{align*}
 f_{i,t}^{(1)}(t_1,t_2)=&-\mathbf{e}_i^{\top}\Big\{\mathbf{S}_{11}(t)e^{\mathbf{B}_{11}(t_2-t)}\mathbf{B}_{13}e^{\mathbf{B}_{33}(t_1-t_2)}\mathbf{B}_{33} \\
 &\hspace{2cm}+\big[\mathbf{I}-\mathbf{S}_{11}(t)\big]e^{\mathbf{A}_{11}(t_2-t)}\mathbf{A}_{13}e^{\mathbf{A}_{33}(t_1-t_2)}\mathbf{A}_{33}\Big\}\mathbb{1},\\[8pt]
f_{i,t}^{(2)}(t_1,t_2)=&-\mathbf{e}_i^{\top}\Big\{\mathbf{S}_{11}(t)e^{\mathbf{B}_{11}(t_1-t)}\mathbf{B}_{12}e^{\mathbf{B}_{22}(t_2-t_1)}\mathbf{B}_{22} \\
 &\hspace{2cm}+\big[\mathbf{I}-\mathbf{S}_{11}(t)\big]e^{\mathbf{A}_{11}(t_1-t)}\mathbf{A}_{12}e^{\mathbf{A}_{22}(t_2-t_1)}\mathbf{A}_{22}\Big\}\mathbb{1}, 
 \end{align*}
 whereas the singular component $f_{i,t}^{(0)}(t_1,t_1)$ is defined by the function
 \begin{align*}
  f_{i,t}^{(0)}(t_1,t_1)=&-\mathbf{e}_i^{\top}\Big\{\mathbf{S}_{11}(t) e^{\mathbf{B}_{11}(t_1-t)}\big(\mathbf{B}_{11} + \mathbf{B}_{12} + \mathbf{B}_{13}\big)\\
 &\hspace{2cm}+\big[\mathbf{I}-\mathbf{S}_{11}(t)\big]e^{\mathbf{A}_{11}(t_1-t)}\big(\mathbf{A}_{11} + \mathbf{A}_{12} + \mathbf{A}_{13}\big)\Big\}\mathbb{1}. 
 \end{align*}
Note that $\mathbf{S}_{11}(t)$ denotes the switching probability matrix of $X$ on $\Gamma_1^c\cap \Gamma_2^c$.
 
 \pagebreak
 
 Thus, the distribution $f_{i,t}(t_1,t_2)$ has no singular component $f_{i,t}^{(0)}(t_1,t_2)$ iff
 \begin{align*}
 \mathbf{A}_{11}+\mathbf{A}_{12}+\mathbf{A}_{13}=\mathbf{0}=\mathbf{B}_{11}+\mathbf{B}_{12}+\mathbf{B}_{13}.
 \end{align*}

Denote by $\boldsymbol{\alpha}$ the restriction of the probability distribution $\boldsymbol{\pi}$ on the set $\Gamma_1^c\cap \Gamma_2^c$ such that $\boldsymbol{\pi}=\big(\boldsymbol{\alpha},\mathbf{0}\big)$. The Bayesian updates $\boldsymbol{\pi}(t)$ on $\Gamma_1^c\cap \Gamma_2^c$ is defined by $\boldsymbol{\alpha}(t)$.
 
The conditional joint density $f_{t}(t_1,t_2)$ of the exit times $\tau_1$ and $\tau_2$ is given by 
\begin{eqnarray} \label{eq:jointpdfitex2}
f_{t}(t_1,t_2)=
\begin{cases}
f_{t}^{(1)}(t_1,t_2), &\; \textrm{if $t_1\geq t_2\geq t \geq 0$} \\[8pt]
f_{t}^{(2)}(t_1,t_2), &\; \textrm{if $t_2\geq t_1\geq t \geq 0$}\\[8pt]
f_{t}^{(0)}(t_1,t_1), &\; \textrm{if $t_1= t_2\geq t \geq 0$}, \\
\end{cases}
\end{eqnarray}
where the subdensity functions $f_{t}^{(1)}(t_1,t_2)$, $f_{t}^{(2)}(t_1,t_2)$ and $f_{t}^{(0)}(t_1,t_2)$ are
 \begin{align*}
 f_{t}^{(1)}(t_1,t_2)=&-\boldsymbol{\alpha}^{\top}(t)\Big\{\mathbf{S}_{11}(t)e^{\mathbf{B}_{11}(t_2-t)}\mathbf{B}_{13}e^{\mathbf{B}_{33}(t_1-t_2)}\mathbf{B}_{33} \\
 &\hspace{2cm}+\big[\mathbf{I}-\mathbf{S}_{11}(t)\big]e^{\mathbf{A}_{11}(t_2-t)}\mathbf{A}_{13}e^{\mathbf{A}_{33}(t_1-t_2)}\mathbf{A}_{33}\Big\}\mathbb{1},\\[8pt]
f_{t}^{(2)}(t_1,t_2)=&-\boldsymbol{\alpha}^{\top}(t)\Big\{\mathbf{S}_{11}(t)e^{\mathbf{B}_{11}(t_1-t)}\mathbf{B}_{12}e^{\mathbf{B}_{22}(t_2-t_1)}\mathbf{B}_{22} \\
 &\hspace{2cm}+\big[\mathbf{I}-\mathbf{S}_{11}(t)\big]e^{\mathbf{A}_{11}(t_1-t)}\mathbf{A}_{12}e^{\mathbf{A}_{22}(t_2-t_1)}\mathbf{A}_{22}\Big\}\mathbb{1}, \\[8pt]
 f_{t}^{(0)}(t_1,t_1)=&-\boldsymbol{\alpha}^{\top}(t)\Big\{\mathbf{S}_{11}(t) e^{\mathbf{B}_{11}(t_1-t)}\big(\mathbf{B}_{11} + \mathbf{B}_{12} + \mathbf{B}_{13}\big)\\
 &\hspace{2cm}+\big[\mathbf{I}-\mathbf{S}_{11}(t)\big]e^{\mathbf{A}_{11}(t_1-t)}\big(\mathbf{A}_{11} + \mathbf{A}_{12} + \mathbf{A}_{13}\big)\Big\}\mathbb{1}.
 \end{align*}

The marginal distribution of $\tau_k$, $k=1,2$, can be established in the same approach for the univariate case. It is given for $s\geq t\geq 0$ and $i\in \Gamma_1^c\cap \Gamma_2^c$ by 
 \begin{align}
 \mathbb{P}\{\tau_k>s \big\vert \mathcal{F}_{t,i}\}=&\mathbf{e}_i^{\top} \Big(\mathbf{S}_{11}(t) e^{\mathbf{B}^{(k)}(s-t)} + \big[\mathbf{I}-\mathbf{S}_{11}(t)\big] e^{\mathbf{A}^{(k)}(s-t)}  \Big)\mathbb{1},\\[8pt]
  \mathbb{P}\{\tau_k>s \big\vert \mathcal{G}_{t}\}=&\boldsymbol{\pi}^{\top}(t) \Big(\mathbf{S}_{11}(t) e^{\mathbf{B}^{(k)}(s-t)} + \big[\mathbf{I}-\mathbf{S}_{11}(t)\big] e^{\mathbf{A}^{(k)}(s-t)}  \Big)\mathbb{1},
  \end{align}
  where the phase-generator matrices $\mathbf{B}^{(k)}$ and $\mathbf{A}^{(k)}$, for $k=1,2$, are defined by
  \begin{align*}
\mathbf{B}^{(1)}=  \left(\begin{array}{cc}
 \mathbf{B}_{11} & \mathbf{B}_{13} \\
  \mathbf{0} &  \mathbf{B}_{33} 
\end{array}\right)
\quad \textrm{and}& \quad 
\mathbf{A}^{(1)}=  \left(\begin{array}{cc}
 \mathbf{A}_{11} & \mathbf{A}_{13} \\
  \mathbf{0} &  \mathbf{A}_{33} 
\end{array}\right), \\[8pt]
\mathbf{B}^{(2)}=  \left(\begin{array}{cc}
 \mathbf{B}_{11} & \mathbf{B}_{12} \\
  \mathbf{0} &  \mathbf{B}_{22} 
\end{array}\right)
\quad \textrm{and}& \quad 
\mathbf{A}^{(2)}=  \left(\begin{array}{cc}
 \mathbf{A}_{11} & \mathbf{A}_{12} \\
  \mathbf{0} &  \mathbf{A}_{22} 
\end{array}\right).
\end{align*}

\section{Conclusions}

We have introduced a generalization of the multivariate phase-type distributions \cite{Assaf1984} under the mixture of absorbing Markov jump processes moving at different speeds on the same finite state space. Such mixture process was proposed in \cite{Frydman2008}, and was discussed in further details in \cite{Surya2018}. The new distributions form non-stationary function of time and have the ability to capture heterogeneity and the past information of the process, when conditioning on its past observation. The attribution of path dependence is due to the non-Markov property of the process. Identities are presented explicit in terms of the Bayesian updates of switching probability and the probability of starting the process in any of the transient phases at any given time, the likelihoods, and the intensity matrices of the underlying processes, despite the fact that the mixture itself is non-Markov. When the underlying Markov processes move at the same speed, in which case the mixture becomes a simple Markov jump process, heterogeneity and path dependence are removed from the identities, and the distributions reduce to \cite{Assaf1984}.

The results presented in this paper can be extended in a natural way to the mixture of a finite number of absorbing Markov jump processes moving at different speeds. Given their availability in explicit form and fine properties, the new distributions should be able to offer appealing features for applications.

\end{document}